\newcommand{\FF}{\mathbb{F}}
\newcommand{\NN}{\mathbb{N}}
\newcommand{\RR}{\mathbb{R}}
\newcommand{\CC}{\mathbb{C}}
\newcommand{\bbB}{\mathbb{B}}
\newcommand{\bbS}{\mathbb{S}}
\newcommand{\be}{\mathbf{e}}
\newcommand{\bg}{\mathbf{g}}
\newcommand{\bs}{\mathbf{s}}
\newcommand{\bu}{\mathbf{u}}
\newcommand{\bv}{\mathbf{v}}
\newcommand{\bw}{\mathbf{w}}
\newcommand{\bx}{\mathbf{x}}
\newcommand{\by}{\mathbf{y}}
\newcommand{\bz}{\mathbf{z}}
\newcommand{\bA}{\mathbf{A}}
\newcommand{\bB}{\mathbf{B}}
\newcommand{\bF}{\mathbf{F}}
\newcommand{\bG}{\mathbf{G}}
\newcommand{\bI}{\mathbf{I}}
\newcommand{\bM}{\mathbf{M}}
\newcommand{\bP}{\mathbf{P}}
\newcommand{\bX}{\mathbf{X}}
\newcommand{\bZ}{\mathbf{Z}}
\newcommand{\bSigma}{\mathbf{\Sigma}}
\newcommand{\tw}{\widetilde{w}}
\newcommand{\tbv}{\widetilde{\bv}}
\newcommand{\cA}{\mathcal{A}}
\newcommand{\cB}{\mathcal{B}}
\newcommand{\cC}{\mathcal{C}}
\newcommand{\cD}{\mathcal{D}}
\newcommand{\cH}{\mathcal{H}}
\newcommand{\cK}{\mathcal{K}}
\newcommand{\cN}{\mathcal{N}}
\newcommand{\cO}{\mathcal{O}}
\newcommand{\cP}{\mathcal{P}}
\newcommand{\cS}{\mathcal{S}}
\newcommand{\cT}{\mathcal{T}}
\newcommand{\cV}{\mathcal{V}}
\newcommand{\ovbSigma}{\overline{\bSigma}}
\newcommand{\hbSigma}{\widehat{\bSigma}}
\newcommand{\hZ}{\widehat{Z}}
\newcommand{\hbZ}{\widehat{\bZ}}
\newcommand{\tr}{\mathrm{tr}\,}
\newcommand{\Tr}{\mathrm{Tr}\,}
\newcommand{\Cov}{\mathrm{Cov}}
\newcommand{\diam}{\mathrm{diam}\,}
\newcommand{\bbE}{\mathbb{E}}
\newcommand{\vol}{\mathrm{vol}}
\newcommand{\vrad}{\mathrm{vr}}
\newcommand{\RRp}{\RR_{+}}
\newcommand{\Napprox}{N_{\mathrm{appr}}}
\newcommand{\Ntrue}{N_{\mathrm{true}}}
\newcommand{\softmax}{\texttt{SoftMax}}
\newcommand{\dimsa}{\kappa}
\newcommand{\wmem}{ \normalfont \texttt{WMEM}}
\newcommand{\clique}{\normalfont \texttt{CLIQUE}}
\newcommand{\rsdf}{\normalfont \texttt{RSDF}}
\newcommand{\wopt}{\normalfont \texttt{WOPT}}
\newcommand{\ttA}{\texttt{A}}
\newcommand{\ttB}{\texttt{B}}
\newcommand{\NP}{\mathrm{NP}}
\newcommand{\NPC}{\mathrm{NPC}}
\newcommand{\bell}{\mathrm{Bell}}
\newcommand{\leqT}{\leq_{\texttt{T}}}
\newcommand{\leqK}{\leq_{\texttt{K}}}
\newcommand{\Unif}{\mathrm{Unif}}
\DeclareMathOperator{\conv}{\mathrm{conv}}
\DeclareMathOperator{\cone}{\mathrm{cone}}
\DeclareMathOperator{\ext}{\mathrm{ext}}
\DeclareMathOperator{\pisep}{\pi_{\mathrm{sep}}}
\DeclareMathOperator{\upisep}{\pi^{\mathrm{unit}}_{\mathrm{sep}}}
\DeclareMathOperator{\argmin}{\mathrm{arg~min}}
\DeclareMathOperator{\argmax}{\mathrm{arg~max}}
\newcommand{\rvec}{\mathrm{vec}}
\newcommand{\rank}{\mathrm{rank}}
\newcommand{\ranks}{\mathrm{rank_{\mathrm{KS}}}}
\newcommand{\cBsa}{\cB^{\mathrm{sa}}}
\newcommand{\cBsep}{\cB^{\mathrm{sep}}_+}
\newcommand{\cSsa}{\cS^{\mathrm{sa}}}
\newcommand{\cSsep}{\cS^{\mathrm{sep}}_+}
\newcommand{\iso}{\mathrm{iso}}
\newcommand{\Werner}{\mathrm{Wer}}
\newtheorem{question}{Question}
\newtheorem{problem}{Problem}
\newtheorem{example}{Example}
\newtheorem{cexample}{Counterexample}
\newcommand{\Cref}{Section \ref}
\newtheorem{theorem}{Theorem}
\newtheorem{proposition}{Proposition}
\newtheorem{lemma}{Lemma}
\newtheorem{corollary}{Corollary}
\newtheorem{definition}{Definition}
\theoremstyle{remark}
\newtheorem{remark}{Remark}
\begin{document}

\title{On Separability of Covariance in Multiway Data Analysis} 
\author[1]{Dogyoon Song}
\author[2,3]{Alfred O. Hero}
\affil[1]{Department of Statistics, University of California, Davis}
\affil[2]{Department of EECS, University of Michigan}
\affil[3]{Department of Statistics, University of Michigan}

\date{\today}

\maketitle

\begin{abstract}
Multiway data analysis aims to uncover patterns in data structured as multi-indexed arrays, with multiway covariance playing a crucial role in many applications. 
However, the high dimensionality of multiway covariance presents significant computational challenges. 
To overcome these challenges, factorized covariance models have been proposed that rely on a separability assumption: the multiway covariance can be accurately expressed as a sum of Kronecker products of mode-wise covariances. 
This paper addresses the representability, certification, and approximation of such separable models, leaving statistical estimation or finite-sample properties aside. 
We reduce the question of whether a given covariance can be decomposed into a separable multiway form to an equivalent question about the separability of quantum states. 
Leveraging results from quantum information theory, we show that generic multiway covariances are typically \emph{not} separable and that determining the best separable approximation is NP-hard. 
These findings suggest that factorized covariance models can be overly restrictive and difficult to fit without additional structural assumptions. 
Nevertheless, our numerical experiments indicate that standard iterative algorithms, namely Frank-Wolfe and gradient descent, often converge close to the best separable approximation. 
As NP-hardness concerns worst-case computational complexity, Kronecker-separable approximations to multiway covariance could still be tractable to apply for analyzing many real-world datasets.

\medskip
\noindent
{\small 
\textbf{\textit{Keywords:}}
Multiway data, Separable covariance, Kronecker PCA, Low-rank covariance model, Tensor decomposition
}

\end{abstract}

\clearpage
\tableofcontents
\clearpage

\section{Introduction}\label{sec:introduction}

Probabilistic modeling is central to statistics, data science and machine learning, underpinning various tasks such as regression, classification, clustering, signal detection and model selection \cite{gelman1995bayesian, bishop2006pattern, hastie2009elements, koller2009probabilistic, jolliffe1982note, meinshausen2006high, friedman2008sparse}.  
A key concept in such modeling is the covariance, which captures joint interactions between variables and goes beyond standard first-order analysis that only attempt to fit the mean. 
Specifically, the covariance captures linear pairwise dependencies among variables and forms the foundation for second-order analysis.

Modern datasets often naturally possess multi-dimensional tensor structure \cite{kolda2009tensor}, where each dimension represents a distinct feature of the observations. 
Examples include chemometrics (sample\,$\times$\,sensor$\,\times$\,time), brain imaging (voxel\,$\times$\,time\,$\times$\,subject), trade networks (country\,$\times$\,product\,$\times$\,year), and recommendation systems (user\,$\times$\,item\,$\times$\,context).  
As data complexity grows, analyzing interactions across multiple modes becomes essential.

However, analyzing multiway data poses significant computational and modeling challenges.  
The number of variables in an unstructured $K$-way real-valued tensor of dimension $d_1 \times \cdots \times d_K$ scales like $d = \prod_{i=1}^K d_i$ while the number of variables in its covariance matrix scales as $d(d+1)/2$, for which standard covariance-based models quickly become infeasible. 
To manage this complexity, it is common to impose low-dimensional structures or decompositions---akin to the singular value decomposition (SVD) for the case of matrices ($K=2$)---to represent the data in a more tractable form.  
Such approaches aim to reduce the effective number of parameters by partitioning or factorizing large data structures into smaller mode-specific components.

In multiway covariance modeling, \emph{separable covariance models} represent the full covariance as a product of mode-wise covariance factors.
For example, the matrix normal model \cite{dawid1981some} posits that the covariance of a two-way (matrix) random variable $X\in\RR^{m\times n}$ takes the form $\Sigma = \Sigma_1 \otimes \Sigma_2$, where $\otimes$ denotes the Kronecker product, and each $\Sigma_i$ captures the covariance structure along one mode (rows vs.\ columns of $X$). 
This factorization simplifies both estimation and interpretation, since the large covariance can be represented through fewer parameters. 
Even when a true covariance is not strictly separable, one may seek to \emph{approximate} it by a sum of products of positive semidefinite (PSD) factors in order to reduce complexity and improve interpretability. 

This raises fundamental questions about the \emph{representability} of arbitrary PSD matrices by such sums, and the feasibility of \emph{computing} the ``best'' of such approximations with respect to a fitting criterion.
A central question is the following: how accurately can large-scale covariances be approximated by Kronecker-separable (K-S) models composed of sums of Kronecker products of PSD factor matrices, and under what conditions is this tractable? 

Motivated by these questions, we investigate the expressive power and limitations of \emph{multiway covariance structures} that are K-S representable as a finite sum of Kronecker products of PSD matrices. 
We demonstrate that, without additional modeling assumptions on data, it is not always possible to decompose a given covariance into such a separable form. 
Concretely, in this paper, we show that: 
\begin{itemize}
    \item 
    A significant fraction of PSD matrices is \emph{not} representable by such sums of Kronecker products (not K-S representable) unless additional structure is assumed;
    \item 
    Determining whether or not a given covariance is K-S can be \emph{NP-hard};
    \item 
    Finding the \emph{best} K-S approximation is also NP-hard, illustrating that there are severe computational challenges for large-scale scenarios.
\end{itemize}

These results rely on a connection to quantum physics, where similar questions arise in the study of entangled versus separable quantum states. 
By translating known results from that domain and supplementing them with new analyses bridging them, we clarify when and why multiway covariances can or cannot be decomposed into PSD Kronecker factors, highlighting both the promise and the pitfalls of separable modeling in high dimensions.
Notwithstanding these worst-case computational complexity implied by NP-hardness, our numerical experiments suggest that certain classes of iterative K-S approximations may remain useful, especially when the dimensions are moderate or when structural patterns are present.

We note that the focus of this paper is on the theoretical limits of separable decompositions, such as identifiability and computational tractability, and we do not study statistical estimators or finite‐sample behavior.

\subsection{Overview of contributions}
Consider a random tensor of order $K$ denoted $\bX \in \RR^{n_1 \times n_2 \times \cdots \times n_K}$. 
Its covariance is an $n \times n$ positive semidefinite (PSD) matrix $\bSigma$, where $n = \prod_{k=1}^K n_k$. 
We say $\bSigma$ is \emph{Kronecker-separable} (K-S)\footnote{See \Cref{sec:sep_covariance} for a formal definition.} if it can be expressed as a sum of Kronecker products of PSD matrices. 
Formally, $\bSigma$ is K-S if there exists a positive integer $r$ and a sequence of $K$-tuples $\big( (\bSigma^{(1)}_a, \dots, \bSigma^{(K)}_a) \big)_{a=1}^r$ such that
\begin{equation}\label{eqn:factorization.simple}
    \bSigma 
        = \sum_{a=1}^r \bSigma^{(1)}_a \otimes \dots \otimes \bSigma^{(K)}_a
    \quad
    \text{where}
    \quad
    \bSigma^{(k)}_a \in \RR^{n_k \times n_k} \text{ is PSD for all } (k,a).
\end{equation}
Here, $r$ sets the complexity of the decomposition, called the Kronecker-separation rank, and each factor $\Sigma^{(k)}_a$ serves as a ``covariance component'' along the $k$-th mode. 
Note that the well-known matrix normal model \cite{dawid1981some} arises as the special case $r=1$ and $K=2$.

A natural starting point is to ask whether \emph{every} PSD matrix $\bSigma$ can be written in the form \eqref{eqn:factorization.simple}.
Because the factors are constrained to be PSD, the answer to this question is nontrivial. 
In fact, even in the simplest non-trivial configuration ($K =2$ and $n_1 = n_2 = 2$), a simple dimension counting argument\footnote{The set of $n \times n$ PSD matrices forms a full-dimensional cone in the space of $n \times n$ symmetric matrices, which has dimension $\binom{n+1}{2}$. For $K=2$ and $n_1 = n_2 = 2$, the set of all possible $\bSigma$ spans a vector space of dimension $\binom{n_1 \times n_2 + 1}{2} = 10$. However, the right hand side of \eqref{eqn:factorization.simple} can only span a space of dimension $\binom{n_1 + 1}{2} \times \binom{n_2 + 1}{2} = 9$.} establishes that some $\bSigma$ cannot be decomposed in such a way. 
This motivates the following quantitative question about K-S representability:
\begin{question}
\label{question:representability}
     Among all multiway covariances, what fraction are K-S? 
\end{question}

Given that not every multiway covariance is (exactly) K-S, two further questions arise regarding \emph{certification} and \emph{approximation}.
\begin{question}
\label{question:certifiability}
    Given a covariance $\bSigma$, can we efficiently determine whether it is K-S? 
\end{question}

\begin{question}
\label{question:approximability}
    If $\bSigma$ is not exactly separable, can we compute its best K-S approximation? 
\end{question}

In \Cref{sec:questions}, we address these questions by linking the Kronecker-separability of covariances to the separability of quantum states. 
A quantum state is modeled by a density operator (PSD and trace~1); it is ``separable'' if it can be written as a convex combination of tensor products of local states (smaller-sized density operators).
Normalizing K-S covariance $\bSigma$ by its trace makes it precisely a ``separable'' quantum state.
We leverage known results on the abundance of entangled (non-separable) states \cite{aubrun2017alice} and the computational hardness of certifying separability \cite{gurvits2003classical, gharibian2008strong} to affirm negative answers to Questions \ref{question:representability}--\ref{question:approximability} in the following sense: 
(1) Most PSD matrices are \emph{not} K-S in high dimensions (Question \ref{question:representability}); 
(2) Deciding separability is \emph{NP-hard} (Question \ref{question:certifiability}); 
(3) Finding the \emph{best} separable approximation is also \emph{NP-hard} (Question \ref{question:approximability}). 
Based on these findings, we conclude that Kronecker-separability is not universally applicable---unlike singular value decomposition or other tensor decompositions---without additional structural assumptions on the covariance.

Despite these negative results, we suggest that K-S covariance models may still be useful in practical scenarios, by investigating:
\begin{question}
\label{question:practical} 
    Given the worst-case hardness of certifying separability or computing K-S approximations, can we still develop effective algorithms that yield high-quality K-S approximations in practice? 
    Under what specific conditions can such algorithms be found? 
\end{question}
We provide a partial answer to Question \ref{question:practical} in \Cref{sec:experiments}, showing that standard iterative methods (e.g., Frank–Wolfe or gradient-based approaches) often yield \emph{good} approximations, even though worst-case hardness precludes universal \emph{goodness} guarantees. 
This suggests that the aforementioned worst-case NP-hardness results may be overly conservative in practice, so computing or certifying Kronecker-separability may be tractable in many instances. 

Overall, this paper shows that K-S covariance modeling offers advantages (simplicity, interpretability) despite its inherent limitations (ubiquity of non-separability, NP-hardness). 
Connections to quantum information deepen our understanding of the underlying barriers, while standard heuristics often suffice in structured, moderate-scale domains. 
A summary of our results and their implications is given in Table~\ref{tab:summary}.

\medskip
\paragraph{Organization}
The remaining sections of this paper are organized as follows. 
In \Cref{sec:background}, we review mathematical background and introduce the notation used throughout. 
In \Cref{sec:sep_covariance}, we formally define the notion of multiway covariance and the concept of Kronecker-separability. 
Next, in \Cref{sec:questions}, we restate Questions \ref{question:representability}, \ref{question:certifiability} and \ref{question:approximability} in a mathematically precise manner and investigate them in individual subsections. 
In \Cref{sec:experiments}, we describe iterative methods for K-S approximation and present numerical simulations. 
Finally, in \Cref{sec:conclusion}, we offer concluding remarks and potential future research directions. 
Additional technical details, proofs, and experiments are given in the supplementary materials.

\begin{table}[t]
    \caption{Summary of results in this paper and their implications.}
    \label{tab:summary}

    \begin{adjustbox}{max width=\textwidth}
    \begin{tabular}{l c c c}
        \toprule
            &   \textbf{Section} &   \textbf{Results}  &   \textbf{Implications}\\
        \midrule
        Question \ref{question:representability}    &   \Cref{sec:separability}     &   Theorem \ref{thm:abundance} &  Abundance of non- \\
        (Representability) &    &    & Kronecker-separable covariances   \\
        \midrule
        Question \ref{question:certifiability} &  \Cref{sec:certification}    &  Some sufficient/necessary conditions &  Verifiable (partial) tests for separability  \\
        (Certification) &    &   Theorem \ref{thm:NP-hard}   &   NP-hardness (worst case)\\
        \midrule
        Question \ref{question:approximability}   & \Cref{sec:approximation}  &  Theorem \ref{thm:approx}  &  NP-hardness of approximation  \\
        (Approximation)   &  &    & (worst case)  \\
        \midrule
        Question \ref{question:practical}  & \Cref{sec:experiments}  & Theorem \ref{thm:convergence_FW} & Oracle complexity bound\\
        (Practical method)               &   & Numerical experiments & Practical utility\\ 
        \bottomrule
    \end{tabular}
    \end{adjustbox}
\end{table}

\subsection{Related work}

\paragraph{Kronecker product and tensor decomposition}
The Kronecker product (KP) combines two matrices into a larger block-structured matrix, algebraically acting as a tensor product on matrix spaces.  
It is widely used in scientific and engineering applications for compact, efficient representations of complex systems.
For example, in control theory, KPs model large systems as compositions of smaller subsystems; in signal processing, they model multi-channel sensing systems; and in network science, they represent hierarchical, multi-level graph structures. 
The approximation properties of KPs were systematically analyzed by van Loan and Pitsianis~\cite{van1993approximation}.

Tensor decompositions extend matrix factorizations to higher-dimensional arrays, i.e., tensors, enabling efficient representations of multiway data. 
Common examples include CANDECOMP/PARAFAC (CP) \cite{harshman1970foundations, carroll1970analysis}, Tucker/HOSVD \cite{tucker1966some, hitchcock1928multiple}, and the tensor-train decompositions \cite{oseledets2011tensor}. 
Advances in algorithms, such as the Alternating Least Squares (ALS), randomized tensor decompositions and spectral representations, have made these methods scalable; see \cite{kolda2009tensor} for a survey. 
Their applications span chemometrics, computer vision, neuroscience, and natural language processing, enabling extraction of latent factors and compression of complex data.  
However, most tensor decomposition frameworks focus on first-order low-rank approximations, without enforcing interpretability constraints or accounting for second-order interactions between variables. 
In contrast, enforcing PSD is essential when the factors are to be interpreted as mode‑wise covariances, a restriction that motivates the KP models studied in this paper.

\medskip
\paragraph{Separable covariance models}
Separable covariance models reduce the effective dimensionality of multiway data by factoring the covariance along each mode, as in the matrix normal model \cite{dawid1981some}.  
This structure simplifies representation and estimation of covariance \cite{strobach1995low, dutilleul1999mle}.  
Extensions to higher-order tensors ($K > 2$) have broadened their use in various domains, including spatiotemporal modeling, gene expression, and radar communications \cite{mardia1993spatial, galecki1994general, hoff2011separable}. 
These models also have been used for inverse covariance estimation in graphical modeling. 
Methods such as transposable covariance \cite{allen2010transposable}, BiGlasso \cite{kalaitzis2013bigraphical}, TeraLasso \cite{greenewald2019tensor}, and SyGlasso \cite{wang2020sylvester} incorporate graph-structural (sparsity) constraints to manage complex dependencies efficiently. 
To increase expressive power, Kronecker-product-based generalizations of PCA (KPCA) \cite{tsiligkaridis2013covariance, greenewald2015robust} approximate the covariances as sums of Kronecker products of mode-wise factors---but in doing so, lose the guarantee that each factor be PSD, limiting the interpretation of the factors as genuine mode-wise covariances.
This paper addresses this representational gap by examining sums of PSD Kronecker factors.

\medskip
\paragraph{Connection to quantum entanglement}
Kronecker-separability is closely connected to separability of quantum states, a fundamental concept in quantum physics. 
A quantum state is represented by a unit-trace positive semi-definite (PSD) operator, called a density operator. 
It is called \emph{separable} if it can be expressed as a convex combination of tensor products of local states; otherwise, it is \emph{entangled} \cite{peres1996separability, horodecki2001separability}. 
Determining separability of quantum states is NP-hard \cite{gurvits2003classical}, and so is finding their best separable approximation.  
While partial criteria for separability like the Positive Partial Transpose (PPT) criterion \cite{peres1996separability, horodecki2001separability} or more advanced tests based on semidefinite programming hierarchies \cite{doherty2004complete} exist, to our knowledge, there is no known criterion that can fully characterize separability. 

Despite this hardness, heuristic methods often succeed in approximating the closest separable state in practice \cite{dahl2007tensor, girardin2022building}. 
These methods perform well empirically, but lack theoretical guarantees, mirroring the behavior observed in Kronecker-separable modeling. 
Motivated by such connections, this paper shows that certifying or approximating Kronecker-separability faces similar challenges, yet standard optimization methods often succeed in finding near-optimal separable approximations, highlighting both the representational limits and practical potential of separable covariance models in data science.

\section{Background and preliminaries}\label{sec:background}
In this section, we introduce the basic notation used throughout the paper and briefly review essential mathematical preliminaries. 

\medskip
\paragraph{Notation}
Let $\NN$ be the set of positive integers. 
For $n \in \NN$, define $[n]\coloneqq \{1, \dots, n\}$. 
We let $\RR$ and $\CC$ denote the fields of real and complex numbers, respectively, and let $\RRp \coloneqq \{ \lambda \in \RR: \lambda \geq 0 \}$. 
Boldface symbols (e.g., $\bv$, $\bSigma$) represent vectors, matrices, or operators, while calligraphic or blackboard bold symbols (e.g., $\cS$, $\RR$) denote sets.

\medskip
\paragraph{Linear algebra essentials}
Let $\cH$ be a Hilbert space (=complete inner product space) over the field $\FF \in \{ \RR, \CC \}$,with inner product $\langle \cdot, \cdot \rangle_{\cH}$. 
For any $\bv \in \cH$, let $\|\bv\| \coloneqq \langle \bv, \bv \rangle_{\cH}^{1/2}$. 
We write $\dim_{\FF} \cH$ for the dimension of $\cH$ as a vector space over $\FF$; we may omit the subscript if $\FF = \RR$. 
The dual space of $\cH$ is denoted by $\cH^*$. 

Given two Hilbert spaces $\cH_1, \cH_2$, we write $\cH_1 \cong \cH_2$ if they are isomorphic, $\cH_1 \otimes \cH_2$ for their tensor product, and $\cB(\cH_1, \cH_2)$ for the space of linear operators from $\cH_1$ to $\cH_2$. 
We define $\cB(\cH) \coloneqq \cB(\cH, \cH)$ as shorthand. 
An operator $\bA$ is self-adjoint if $\bA = \bA^*$, where $\bA^*$ is the Hermitian adjoint of $\bA$. 
The space of self-adjoint operators on $\cH$, denoted $\cBsa(\cH)$, forms a real vector subspace\footnote{A vector space over $\RR$. This fact is important because it enables defining a cone, which would be impossible if $\cBsa(\cH)$ were only a complex vector space.} of $\cB(\cH)$, even when $\FF = \CC$. 
The dimension of $\cBsa(\cH)$ is 
\begin{equation}\label{eqn:dimsa}
    \dimsa(\cH) \coloneqq
        \dim_{\RR} \cBsa(\cH) = 
            \begin{cases}
                \binom{ n + 1 }{ 2}  &   \text{if } \cH = \RR^n\\
                n^2                 &   \text{if } \cH = \CC^n.
            \end{cases}
\end{equation}
A self-adjoint operator $\bA$ is positive semidefinite, denoted $\bA \succeq 0$, if all its eigenvalues are nonnegative\footnote{If $\bA \in \cBsa(\cH)$, then all the eigenvalues of $\bA$ are real, whether the underlying field is $\RR$ or $\CC$.}. 
We define $\cBsa_+(\cH) \coloneqq \{ \bA \in \cBsa(\cH): \bA \succeq 0 \}$, which forms a proper, full-dimensional, convex cone in $\cBsa(\cH)$.

\medskip
\paragraph{Convex geometry basics}
A set $\cS \subseteq \RR^d$ is convex if, for every $\bx, \by \in \cS$, the line segment between $\bx$ and $\by$ is contained in $\cS$. 
For any nonempty set $\cS \in \RR^d$, let $\conv \cS$ denote its convex hull, i.e., the smallest convex set containing $\cS$. 
A set $\cC \subseteq \RR^d$ is a cone if it is invariant under positive scaling, meaning $\lambda \cdot \bx \in \cC$ for all $\lambda \in \RRp$ and $\bx \in \cC$. 
A nonzero element $\bv \in \cC$ is an extreme ray if there do not exist linearly independent $\bx, \by \in \cC$ and positive scalars $\alpha, \beta$ such that $\bv = \alpha \bx + \beta \by$. 
We define $\ext \cC \coloneqq \{ \bv \in \cC: \bv \text{ is an extreme ray of }\cC ~\text{and}~\|\bv\|=1 \}$.

By the spectral theorem \cite[Chapter 7]{hall2013quantum}, we have
\begin{equation}\label{eqn:cone_psd}
    \cBsa_+(\cH) = \cone \left\{ \bv \otimes \bv^*: \bv \in \cH \right\},
\end{equation}
where $\cone \cS$ is the conic hull of $\cS$, and moreover, $\ext \cBsa_+(\cH) = \left\{ \bv \otimes \bv^*: \bv \in \cH ~\text{and}~\|\bv\|=1 \right\}$. 
Consequently, any $\bA \in \cBsa_+(\cH)$ can be expressed as a nonnegative linear combination of at most $\dim_{\FF} \cH$ ``rank-1'' projectors $\bv \otimes \bv^* \in \ext \cBsa_+(\cH)$, by the spectral theorem.

\medskip
\paragraph{Computational complexity and NP‑hardness}
An algorithm runs in \emph{polynomial‑time} if its worst‑case runtime is $O(n^{c})$ for some constant $c$, where $n$ is the input size. 
The class $\mathbf P$ comprises decision problems\footnote{A decision problem is a computational problem that has a "yes" or "no" answer based on its input.} whose instances are 
solvable in polynomial time, whereas the class $\mathbf{NP}$ consists of problems whose proposed solutions (certificates) can be verified in polynomial time. 
NP hardness is a worst-case notion: a problem $\Pi$ is NP-hard if \emph{every} problem in $\mathbf{NP}$ can be reduced to it in polynomial time \cite{cook1971complexity,karp1972reducibility}. 
The widely held conjecture $\mathbf P\neq\mathbf{NP}$ implies that no polynomial‑time algorithm can solve NP-hard problems in the worst-case \cite{garey1979computers}. 
Alternative frameworks for assessing computational complexity, such as average‑case complexity \cite{levin1986average} and smoothed analysis \cite{spielman2004smoothed}, investigate algorithmic performance on typical or slightly perturbed problem inputs, illustrating practical tractability can coexist with worst‑case intractability. 

\section{Multiway covariance and Kronecker-separability}\label{sec:sep_covariance}

\subsection{Covariance of multiway data}
Multiway data, represented as multi-dimensional arrays (e.g., matrices or tensors), can be vectorized by unpacking. 
For example, a real-valued $K$-way tensor $\bX\in\mathbb{R}^{d_1\times \cdots \times d_K}$ with modal dimensions $d_1, \dots, d_K$ can be represented as a vector $\bx \in \RR^d$ where $d = \prod_{i=1}^K d_i$. 
When modeled as a random variable, such data lie in a Hilbert space $\cH$ that is the tensor product of several Hilbert spaces. 
For instance, an $m \times n$ random matrix can be viewed as a random vector in $\RR^{m \times n} \cong \RR^m \otimes \RR^n$. 
More generally, a random multi-dimensional array with $K$ modes corresponds to a random vector in $\cH = \bigotimes_{k=1}^K \cH_k$.

\begin{definition}
\label{defn:multiway_random_vector}
    Let $K \in \NN$ and $\cH_1, \dots, \cH_K$ be Hilbert spaces. 
    A random vector $\bX \in \cH = \bigotimes_{k=1}^K \cH_k$ is called a \emph{$K$-way random vector}. 
    If $K \geq 2$ and there is no risk of confusion, we simply refer to $\bX$ as a \emph{multiway random vector}. 
\end{definition}

Next, we define the covariance of a random vector $\bX \in \cH$ as a positive semidefinite (PSD), self-adjoint operator on $\cH$. 
\begin{definition}\label{def:covariance}
    Let $\bX$ be a random vector that takes value in $\cH$. The \emph{covariance} of $\bX$ is defined as the expected value
    \[
        \Cov(\bX) \coloneqq \bbE \big[ (\bX - \bbE \bX) \otimes (\bX - \bbE \bX)^* \big].
    \]
    If $\bX$ is a $K$-way random vector ($K \geq 2$), the covariance of $\bX$ is referred to as a \emph{$K$-way covariance}, or simply as a \emph{multiway covariance}.  
\end{definition}

\begin{remark}\label{rem:cov_psd}
    For any random vector $\bX \in \cH$, the covariance $\Cov(\bX) \in \cBsa_+(\cH)$.
\end{remark}

The definition of covariance as a PSD operator aligns with the traditional definition as a PSD matrix when $\cH$ is finite-dimensional. 
For example, when $\cH = \RR^n$ and $\bX$ is represented as an $n \times 1$ column vector, the covariance of $\bX$ can be written an $n \times n$ PSD matrix $\Cov(\bX) = \bbE \big[ (\bX - \bbE \bX) (\bX - \bbE \bX)^{\top} \big]$. 
Conversely, any PSD operator $\bSigma \in \cBsa_+(\cH)$ corresponds to the covariance of some random vector in $\cH$, e.g., a standard Gaussian scaled by $\bSigma^{1/2}$. 

The covariance of a random vector $\bX$ in $\cH$ takes value in $\cBsa_+(\cH)$, a full-dimensional convex cone in the real vector space $\cBsa(\cH)$. 
Since $\cBsa(\cH)$ has dimension $\dimsa(\cH)$, expressing an arbitrary $\bSigma \in \cBsa_+(\cH)$ requires $\dimsa(\cH)$ real-valued parameters, where $\dimsa(\cH) \propto \dim_{\FF}(\cH)^2$, cf. \eqref{eqn:dimsa}. 
Given that $\dim_{\FF} \cH = \prod_{k=1}^K \dim_{\FF} \cH_k$, processing and storing large-scale multiway covariances pose significant computational challenges, unless $\bSigma$ has additional structure.

\subsection{Kronecker-separable covariance}\label{sec:separable_cov}
We define a class of multiway covariances that can be expressed as a sum of products of mode-wise covariances. 
\begin{definition}
\label{defn:separable}
    Let $\cH$ and $\cH_1, \dots, \cH_K$ be Hilbert spaces ($K \geq 2$). 
    A covariance $\bSigma \in \cBsa_+(\cH)$ is \emph{$(\cH_1, \cdots, \cH_K)$-Kronecker-separable} (K-S)
    if there exists a sequence $\Big( \big(\bSigma^{(1)}_a, \dots, \bSigma^{(K)}_a \big) \Big)_{a=1}^r$ for some $r \in \NN$ such that 
    \begin{equation}\label{eqn:separable}
            \bSigma = \sum_{a=1}^r \bSigma^{(1)}_a \otimes \cdots \otimes \bSigma^{(K)}_a
            \qquad\text{where}\quad
            \bSigma^{(k)}_a \in \cBsa_+(\cH_k), ~\forall (k, a).
    \end{equation}
    In particular, if $\cH = \bigotimes_{k=1}^{K} \cH_k$, we may simply say $\bSigma$ is \emph{Kronecker-separable}, omitting $(\cH_1, \dots, \cH_K)$. 
    The representation in \eqref{eqn:separable} is called a \emph{Kronecker decomposition} of $\bSigma$, and the smallest such $r$ is the \emph{Kronecker-separation rank} of $\bSigma$, denoted $\ranks(\bSigma)$.
\end{definition}
 
Without the PSD constraint on each factor, such a decomposition always exists for any multiway covariance, with $ r \leq d^2 / \max\{ d_1^2, \cdots, d_K^2 \}$ where $d = \prod_{a=1}^r d_a$. 
Such representation can be computed, for example, by Kronecker PCA \cite{tsiligkaridis2013covariance,greenewald2015robust}. 
However, the PSD requirement distinguishes Kronecker-separability from other decompositions, such as singular value decomposition (SVD) or tensor rank decomposition \cite{hitchcock1927expression, kruskal1977three, kolda2009tensor}. 
As we will see shortly in \Cref{sec:counter_examples}, some covariances do not admit a Kronecker decomposition as defined in \eqref{eqn:separable}, whereas SVD and tensor rank decomposition always exist with a finite decomposition rank.

Next, we present an alternative, equivalent definition of K-S covariances that is often more convenient for geometric analysis.
\begin{definition}\label{defn:separable.2}
    Let $\cH$ and $\cH_1, \dots, \cH_K$ be Hilbert spaces. 
    A covariance $\bSigma \in \cBsa_+(\cH)$ is \emph{$(\cH_1, \cdots, \cH_K)$-separable} 
    if there exists a sequence of vectors $\left( \big( \bv_a^{(k)} \in \cH_k \setminus \{0\}: k \in [K] \big)\right)_{a=1}^r$ for some $r \in \NN$ such that 
    \begin{equation}\label{eqn:separable.2}
        \bSigma = \sum_{a=1}^r \bv_a \otimes \bv_a^*  \quad\text{where}\quad \bv_a = \bigotimes_{k=1}^K \bv_a^{(k)}.
    \end{equation}
    If $\cH = \bigotimes_{k=1}^{K} \cH_k$, we simply say $\bSigma$ is separable, omitting $(\cH_1, \dots, \cH_K)$. 
    The smallest $r$ for which \eqref{eqn:separable.2} holds is called the \emph{separation rank} of $\bSigma$, denoted $\rank_s(\bSigma)$.
\end{definition}

If $\bSigma$ has separation rank $1$, then it satisfies two layers of ``rank-1'' constraints: 
(i) $\bSigma = \bv \otimes \bv^*$ is a rank-1 matrix as an operator on $\cH$; and 
(ii) $\bv = \bv^{(1)} \otimes \cdots \otimes \bv^{(k)}$ is itself a rank-1 tensor (each $\bv^{(k)}$ spanning one mode). 
Note that Condition (ii) is what differentiates separable covariances from non-separable ones. 

\begin{example}\label{example:rank_1}
    Let $\cH_1 = \cH_2 = \RR^2$ and let $\cH = \cH_1 \otimes \cH_2$. 
    Any vector $\bv \in \cH$ can be identified\footnote{With bases for $\cH_1$ and $\cH_2$ fixed, this corresponds to reshaping a $4 \times 1$ coordinate vector to a $2 \times 2$ matrix.} with a $2 \times 2$ matrix $\bM_{\bv}$, via the canonical isomorphism $\cH_2 \cong \cH_2^*$. 
    Observe that there exists a tuple $\big(\bv^{(1)}, \bv^{(2)} \big) \in \cH_1 \times \cH_2$ such that $\bv = \bv^{(1)} \otimes \bv^{(2)}$ if and only if $\bM_{\bv}$ is a rank-1 matrix.
\end{example}

Kronecker-separability (Definition \ref{defn:separable}) and separability (Definition \ref{defn:separable.2}) are equivalent.
\begin{proposition}\label{prop:separable_equivalence}
    A multiway covariance $\bSigma \in \cBsa_+(\cH)$ is K-S if and only if $\bSigma$ is separable. 
    Moreover, if $\bSigma$ is K-S, then $\ranks(\bSigma) \leq \rank_s(\bSigma) \leq \dimsa(\cH)$.
\end{proposition}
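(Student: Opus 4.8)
The plan is to deduce the equivalence from a single algebraic identity relating rank-one operators on $\cH$ to Kronecker products of rank-one operators on the factors $\cH_k$, and then to read off the rank bound from a conic version of Carath\'eodory's theorem.
\textbf{Key identity.} The adjoint distributes over tensor products of vectors, $\big(\bigotimes_{k=1}^K \bw^{(k)}\big)^{*} = \bigotimes_{k=1}^K (\bw^{(k)})^{*}$, since $\langle \bigotimes_k \bw^{(k)}, \bigotimes_k \bx^{(k)}\rangle_{\cH} = \prod_k \langle \bw^{(k)}, \bx^{(k)}\rangle_{\cH_k}$; together with the mixed-product rule $(\bigotimes_k \bA_k)(\bigotimes_k \bC_k) = \bigotimes_k (\bA_k \bC_k)$ on $\cB(\cH) \cong \bigotimes_k \cB(\cH_k)$ this gives
\[
    \Big(\bigotimes_{k=1}^K \bv^{(k)}\Big) \otimes \Big(\bigotimes_{k=1}^K \bv^{(k)}\Big)^{*} \;=\; \bigotimes_{k=1}^K \big( \bv^{(k)} \otimes (\bv^{(k)})^{*}\big),
\]
each factor $\bv^{(k)} \otimes (\bv^{(k)})^{*}$ being a rank-one PSD operator on $\cH_k$. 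I would verify this by evaluating both sides on an arbitrary product vector $\bx = \bigotimes_k \bx^{(k)}$, which span $\cH$.

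\textbf{Separable $\Rightarrow$ Kronecker-separable.} Given a decomposition \eqref{eqn:separable.2}, apply the key identity to each summand $\bv_a \otimes \bv_a^{*}$ to obtain $\bSigma = \sum_{a=1}^r \bigotimes_{k=1}^K \bSigma_a^{(k)}$ with $\bSigma_a^{(k)} \coloneqq \bv_a^{(k)} \otimes (\bv_a^{(k)})^{*} \in \cBsa_+(\cH_k)$, which is a Kronecker decomposition in the sense of \eqref{eqn:separable}. Taking $r = \rank_s(\bSigma)$ shows $\bSigma$ is Kronecker-separable with $\ranks(\bSigma) \le \rank_s(\bSigma)$.

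\textbf{Kronecker-separable $\Rightarrow$ separable, with the bound.} Given \eqref{eqn:separable} with PSD factors, the spectral theorem writes each $\bSigma_a^{(k)}$ as a finite sum of rank-one PSD operators $\bu \otimes \bu^{*}$ (absorbing the nonnegative eigenvalues into the eigenvectors, which is legitimate since they are nonnegative reals). Expanding each $\bigotimes_k \bSigma_a^{(k)}$ multilinearly and applying the key identity to every resulting term exhibits $\bSigma$ as a finite nonnegative combination of operators $\bv \otimes \bv^{*}$ with $\bv = \bigotimes_k \bv^{(k)}$ a product vector; hence $\bSigma$ is separable, and moreover $\bSigma$ lies in the convex cone $\cC \coloneqq \cone\{\, \bv \otimes \bv^{*} : \bv = \bigotimes_{k=1}^K \bv^{(k)},\ \bv^{(k)} \in \cH_k\,\}$ inside the $\dimsa(\cH)$-dimensional real vector space $\cBsa(\cH)$. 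The conic analogue of Carath\'eodory's theorem (cf. Lemma \ref{lem:caratheodory}) then expresses $\bSigma$ as a nonnegative combination of at most $\dimsa(\cH)$ generators of $\cC$; absorbing the coefficients back into the product vectors yields a decomposition \eqref{eqn:separable.2} with at most $\dimsa(\cH)$ terms, so $\rank_s(\bSigma) \le \dimsa(\cH)$. Combined with the previous paragraph this proves both the equivalence and the chain $\ranks(\bSigma) \le \rank_s(\bSigma) \le \dimsa(\cH)$.

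\textbf{Main obstacle.} The only delicate points are the key identity itself and the combinatorics of the multilinear expansion in the last step: the naive expansion can produce as many as $r\prod_k \dim_{\FF}\cH_k$ terms, so the conic Carath\'eodory step is essential to compress the count to $\dimsa(\cH)$. If one prefers not to invoke the conic version as a black box, a short self-contained reduction suffices: whenever more than $\dimsa(\cH)$ generators appear, they are linearly dependent in $\cBsa(\cH)$, and a standard exchange step deletes one of them while keeping every coefficient nonnegative, so one iterates until at most $\dimsa(\cH)$ remain.
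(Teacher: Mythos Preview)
Your proof is correct and follows essentially the same route as the paper: the key rank-one identity, spectral expansion of the PSD Kronecker factors to pass from Kronecker-separable to separable, and Carath\'eodory to bound $\rank_s$. You are in fact slightly more careful than the paper in explicitly invoking the \emph{conic} Carath\'eodory bound (yielding $\dimsa(\cH)$ terms rather than the $\dimsa(\cH)+1$ one would get from a naive application of the convex version in Lemma~\ref{lem:caratheodory}), but the argument is otherwise the same.
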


\medskip
\paragraph{Motivation: Low-rank covariance model}
Consider a $2$-way random vector with covariance $\bSigma \in \cBsa_+(\cH_1 \otimes \cH_2)$ that is K-S, for which $\ranks(\bSigma) \ll \min\{ \dim_{\FF} \cH_1, \dim_{\FF} \cH_2 \}$. 
The number of parameters required to represent $\bSigma$ is $\ranks(\bSigma) \cdot \left[ 1 + \dimsa(\cH_1) + \dimsa(\cH_2) \right] $, which is significantly fewer than the $\dimsa( \cH_1 \otimes \cH_2 )$ parameters needed without Kronecker-separability. 
For example, when $\cH_1 = \RR^{n_1}$ and $\cH_2 = \RR^{n_2}$, we have:
\begin{align*}
    \ranks(\bSigma) \cdot \left[ 1 + \dimsa(\cH_1) + \dimsa(\cH_2) \right]
        &= \ranks(\bSigma) \cdot \left[ 1 + \binom{n_1 + 1}{2} + \binom{n_2 + 1}{2} \right]\\
        &\ll \binom{n_1 n_2 + 1}{2} 
        = \dimsa( \cH_1 \otimes \cH_2 ).
\end{align*}
The model complexity is significantly reduced for K-S multiway covariances, especially when $K$ is large or the modal dimensions $d_1, d_2$ are comparable to each other. 

\subsection{Examples of Kronecker-separable covariances}\label{sec:examples}
We provide examples of K-S covariance models in two categories: (i) those with a stochastic representation, interpretable as the covariances of some random variables; and (ii) those introduced as mathematical constructs without a stochastic representation. 
Though our exposition focuses on the case $K=2$, these examples readily generalize to $K \geq 2$.

\medskip
\paragraph{Kronecker-separable covariances with stochastic representation}
Consider a random matrix $\bX \in \RR^{m \times n}$. 
We describe two data models for $\bX$ that admit simple stochastic representations.

\begin{example}[Matrix normal model \cite{dawid1981some}]\label{example:matrix_normal}
    Let $\bX = \bA \bZ \bB$ where $\bA \in \RR^{m \times m}$, $\bB \in \RR^{n \times n}$, and $\bZ$ is a random matrix with i.i.d. standard Gaussian entries. 
    Then $\bX$ is a Gaussian random matrix with mean zero and covariance $\bA \bA^{\top} \otimes \bB \bB^{\top}$, called the matrix normal covariance. 
\end{example}

\begin{example}[Sylvester model \cite{wang2020sylvester}]\label{example:sylvester}
    Let $\bX$  be generated by the Sylvester matrix equation:
    \[
        \bX = \bA \bZ + \bZ \bB^{\top},
    \]
    where $\bA \in \RR^{m \times m}$, $\bB \in \RR^{n \times n}$ are symmetric matrices, and $\bZ$ is a random matrix with i.i.d. standard Gaussian entries. 
    With vectorization, this equation can be equivalently written as
    \begin{equation}\label{eqn:sylvester_vec}
        \rvec(\bX) = \left( \bA \oplus \bB \right)  \rvec(\bZ),
    \end{equation}
    yielding covariance $\bSigma = \Cov(\bX) = \left( \bA \oplus \bB \right) \left( \bA \oplus \bB \right)^{\top} = \bA^2 \otimes \bI_n + 2 \bA \otimes \bB + \bI_m \otimes \bB^2$.
\end{example}

In the matrix normal model, the covariance $\bSigma = \bA \bA^{\top} \otimes \bB \bB^{\top}$ represents the simplest K-S form with $K=2$ and $\ranks(\bSigma) = 1$. 
This model extends to $K \geq 3$ \cite{ohlson2013multilinear, manceur2013maximum} via the so-called $K$-mode Kronecker product (KP) model\footnote{The KP model is also referred to as the ``separable'' covariance model in some literature, e.g., \cite{hoff2011separable, ohlson2013multilinear}.}, which posits
\begin{equation}\label{eqn:kp_model}
    \bSigma = \bSigma^{(1)} \otimes \cdots \otimes \bSigma^{(K)}.
\end{equation}
The KP model has been applied in many data science applications including longitudinal data analysis \cite{galecki1994general, werner2008estimation}, spatial data analysis \cite{cressie2015statistics}, and missing data imputation \cite{allen2010transposable, hoff2011separable}. 

The ``Sylvester model'' is widely used in physical modeling \cite{kressner2010krylov, wang2020sylvester}, where $\bX$ represents the state of a system governed by a physical operator $\bA \oplus \bB$ (e.g., the Laplacian diffusion operator), and $\bZ$ denotes the external random excitation of the system. 
This model, and the Sylvester equation, have been extensively studied in numerical analysis, particularly in the context of finite-difference discretization of elliptical PDEs \cite{grasedyck2004existence, kressner2010krylov}. 

Both examples illustrate how the model assumptions constrain the covariance structure, reducing the number of parameters needed for representation from $\Theta(m^2 n^2)$ to $\Theta(m^2 + n^2)$.

\medskip
\paragraph{Kronecker-separable covariances without a stochastic representation}
Below are two examples where Kronecker-separability is utilized to approximate the (inverse) covariance. 

\begin{example}[Kronecker PCA model \cite{greenewald2015robust}]\label{example:KPCA_model}
    Let $K = 2$ and $r \in \NN$. 
    The Kronecker PCA model for the covariance of $\bX$ is of the form
    \[
        \bSigma = \sum_{a=1}^r \bA_a \otimes \bB_a  
    \]
    where $\bA_a \in \RR^{m \times m}$ and $\bB_a \in \RR^{n \times n}$ are symmetric matrices (however, not necessarily PSD). 
\end{example}

\begin{example}[Kronecker sum model \cite{kalaitzis2013bigraphical, greenewald2019tensor}]\label{example:KS_model}
    The Kronecker sum model assumes that the covariance or inverse covariance of $\bX$ can be expressed as a Kronecker sum of $K$ PSD factors. 
    For $K=2$, it takes the form  
    \[
        \bSigma = \bA \oplus \bB
            = \bA\otimes \bI_n + \bI_m \otimes \bB
    \]
    where $\bA \in \cBsa_+(\RR^m)$ and $\bB \in \cBsa_+(\RR^n)$.
\end{example}

Like standard PCA, Kronecker PCA (Example \ref{example:KPCA_model}) captures the most significant components through low-rank approximation ($r\ll \min\{m,n\}$) \cite{tsiligkaridis2013covariance, greenewald2015robust}. 
However, unlike the Kronecker sum model (Example \ref{example:KS_model}), the lack of PSD constraints on $\bA_a$ and $\bB_a$ prevents interpreting them as mode-wise covariance factors. 
The Kronecker sum model is especially useful in probabilistic graphical modeling, where it has a natural interpretation as a graph Cartesian product \cite{kalaitzis2013bigraphical, greenewald2019tensor}. 
Both models substantially reduce the dimensionality required to represent the covariance, making them effective for high-dimensional data analysis.

\subsection{Counter-examples to Kronecker-separability}\label{sec:counter_examples}
We present three examples of covariance matrices that are not K-S, drawing on and adapting results from quantum physics. 
First, when $\min\{ \dim \cH_1, \dim \cH_2 \} \geq 2$, there exist $\bSigma \in \cBsa_+(\cH_1 \otimes \cH_2)$ that are not K-S. 

\begin{cexample}[Bell state \cite{bell1964einstein}] \label{example:entangled}
    Let $\cH_1 = \cH_2 = \FF^2$ and let $\bv_{\bell} = \be^{(1)}_1 \otimes \be^{(2)}_1 + \be^{(1)}_2 \otimes \be^{(2)}_2 \in \cH_1 \otimes \cH_2$ where $\{ \be_1, \be_2 \}$ is the standard basis of $\FF^2$. Then the covariance $\bSigma_{\bell} = \bv_{\bell} \otimes \bv_{\bell}^*$ is not K-S. 
    In matrix form,
    \begin{equation}\label{eqn:Sigma_entangled}
        \bSigma_{\bell} = \begin{bmatrix} 1&0&0&1\\0&0&0&0\\0&0&0&0\\1&0&0&1\end{bmatrix} \in \FF^{4 \times 4} \cong \FF^{2 \times 2} \otimes \FF^{2 \times 2}.
    \end{equation}
\end{cexample}

The Bell state $\bSigma_{\bell}$ is a classic example of an entangled quantum state and can be proven non-separable using the Peres-Horodecki criterion \cite{peres1996separability, horodecki2001separability}, which asserts that if $\bSigma$ is K-S, then its partial transpose must be PSD; see \Cref{sec:verifiable}. 
This example generalizes to higher dimensions. 
For $K =3$ and $\cH_1 = \cH_2 = \cH_3 = \FF^2$, the covariance $\bSigma_{\bell} \otimes \bI_2$ is not K-S; similarly, such examples exist for any $K \geq 3$. 
If $\dim \cH_1, \dim \cH_2 > 2$, we can embed $\bSigma_{\bell}$ into arbitrary $2$-dimensional subspaces of $\cH_1$ and $\cH_2$. 
Thus, for any nontrivial configuration with $K \geq 2$ and at least two component dimensions $\min\{ d_1, d_2 \} \geq 2$, non-K-S covariances exist.

To address potential concerns about degeneracy ($\bSigma$ being PSD but not positive definite), we provide an example of a positive definite covariance that is not K-S.
\begin{cexample}\label{example:aug_bell}
    Consider the matrix $\bSigma_{\bell} + \lambda \cdot \bI_4$ with $\lambda \in \RRp$. 
    The eigenvalues of this matrix are $\lambda$ (multiplicity 3) and $2 + \lambda$, making it positive definite for any $\lambda > 0$. 
    For $\lambda < 1$, $\bSigma_{\bell} + \lambda \cdot \bI_4$ is not K-S because its partial transpose\footnote{See \Cref{sec:verifiable}.} $\Phi(\bSigma_{\bell} + \lambda \cdot \bI_4)$ has a negative eigenvalue ($-1 + \lambda$). 
    Therefore, for any $\lambda \in (0,1)$, the $\bSigma_{\bell} + \lambda \cdot \bI_4$ is positive definite but not K-S. 
\end{cexample}

Lastly, we consider a random covariance model that is not K-S with high probability.

\begin{cexample}\label{example:Wishart}
    Let $d \in \NN$ and let $\bX \in \RR^{d^2}$ be the vectorization of a Gaussian random matrix with mean zero and $\Cov(\bX) = \bI_{d^2}$. 
    Let $\big( \bX_1, \dots , \bX_n \big)$ be $n$ i.i.d. random samples drawn from this distribution. 
    According to Theorem \ref{thm:wishart} below, with overwhelming probability, the sample covariance $\hbSigma_n = \frac{1}{n} \sum_{i=1}^n \bX_i \bX_i^{\top}$ is not K-S, unless $n$ is sufficiently large.
\end{cexample}

\begin{theorem}[adapted from {\cite[Theorem 2.3]{aubrun2014entanglement}}]\label{thm:wishart}
    There exist constants $C, c > 0$ and a function $\theta: \NN \to \RRp$ satisfying $c d^3 \leq \theta(d) \leq C d^3 \log^2 d$ for all $d \in \NN$ such that for any $\varepsilon > 0$,
    \begin{align*}
        \text{if } n \leq (1-\varepsilon) \, \theta(d), \qquad &\text{then } \Pr \big[ \hbSigma_n \text{ is K-S} \big] \leq 2 \exp \left( - c_{\varepsilon} \, d^3 \right),\\
        \text{if } n \geq (1+\varepsilon) \, \theta(d), \qquad &\text{then } \Pr \big[ \hbSigma_n \text{ is not K-S} \big] \leq 2 \exp \left( - c_{\varepsilon} \, n \right),
    \end{align*}
    where $c_{\varepsilon}$ is a positive constant depending only on $\varepsilon$. 
    \label{thm:example3}
\end{theorem}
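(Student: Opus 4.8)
The plan is to recognize $\hbSigma_n$, after trace-rescaling, as a random \emph{induced state} on the bipartite space $\RR^d \otimes \RR^d \cong \RR^{d^2}$, and to transfer the known two-sided entanglement threshold for such ensembles into the language of Kronecker-separability. Assemble the samples into $\bX = [\bX_1 \mid \cdots \mid \bX_n] \in \RR^{d^2 \times n}$, which then has i.i.d.\ standard Gaussian entries and satisfies $n \hbSigma_n = \bX \bX^{\top}$; thus $n\hbSigma_n$ is a real Wishart matrix of parameters $(d^2, n)$. Since $\tr \hbSigma_n = \tfrac1n \|\bX\|_F^2 > 0$ almost surely, $\rho_n \coloneqq \hbSigma_n / \tr \hbSigma_n = \bX \bX^{\top} / \|\bX\|_F^2$ is a well-defined density operator on $\RR^d \otimes \RR^d$; writing $\bY = \bX / \|\bX\|_F$, the vector $\rvec(\bY)$ is Haar-uniform on the unit sphere of $(\RR^d \otimes \RR^d) \otimes \RR^n$ and $\rho_n = \bY\bY^{\top}$ is the partial trace of the pure state $\rvec(\bY)\rvec(\bY)^{\top}$ over the $\RR^n$ factor. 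Hence $\rho_n$ follows exactly the real induced-state ensemble with system $\RR^d \otimes \RR^d$ and environment dimension $n$. Since the set of Kronecker-separable covariances is invariant under multiplication by positive scalars and, by the correspondence developed in Section~\ref{sec:questions}, a trace-normalized covariance is Kronecker-separable precisely when the associated state is separable, we conclude that $\hbSigma_n$ is Kronecker-separable if and only if $\rho_n$ is a separable state.

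It then suffices to invoke the entanglement threshold for random induced states, \cite[Theorem~2.3]{aubrun2014entanglement}: there are absolute constants $c, C > 0$ and a function $\theta : \NN \to \RRp$ with $c\, d^3 \le \theta(d) \le C\, d^3 \log^2 d$ such that, for every $\varepsilon > 0$, a random induced state with environment dimension $s$ on $\CC^d \otimes \CC^d$ is entangled with probability at least $1 - 2\exp(-c_\varepsilon d^3)$ when $s \le (1-\varepsilon)\theta(d)$, and separable with probability at least $1 - 2\exp(-c_\varepsilon s)$ when $s \ge (1+\varepsilon)\theta(d)$. Taking $s = n$ and combining with the equivalence of the previous paragraph yields the two displayed implications.

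The only nontrivial point is that \cite{aubrun2014entanglement} treats the complex ensemble, whereas the sample covariance here lives over $\RR$ (an orthogonally invariant ensemble), which is why the theorem is stated as ``adapted from.'' The adaptation is routine but must be carried out: (i) the real Wishart ensemble concentrates around the maximally mixed state $\bI_{d^2}/d^2$ at the Hilbert--Schmidt scale governed by $n$, via the same Gaussian concentration-of-measure and $\varepsilon$-net estimates, now with a real Gaussian in place of a complex one; and (ii) the inradius and Gaussian mean width of the set of separable states inside $\cBsa(\RR^d \otimes \RR^d)$ --- the two quantities whose comparison to the concentration radius determines the threshold --- exhibit the same $d$-dependence as in the complex case, since $\dimsa(\RR^d \otimes \RR^d) = \binom{d^2+1}{2} = \Theta(d^4)$ and the relevant width is $\Theta(d^3)$ up to logarithmic factors independently of the scalar field; the precise form of $\theta(d)$ and the absolute constants simply absorb the difference between the real and complex dimension counts. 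Everything else is immediate: $\tr \hbSigma_n > 0$ a.s., and the separability of a fixed covariance is a well-posed (indeed closed) property by the uniform rank bound $\rank_s(\bSigma) \le \dimsa(\cH)$ of Proposition~\ref{prop:separable_equivalence}. I expect step (ii) --- porting the mean-width and net argument for the separable set to the real setting and re-tracking the constants in $\theta(d)$ --- to be the main obstacle, while the concentration half (i) is comparatively standard.
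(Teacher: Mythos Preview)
Your proposal is correct and follows essentially the same approach as the paper: identify the trace-normalized sample covariance with a random induced state and invoke the phase-transition result of \cite{aubrun2014entanglement} via the equivalence between Kronecker-separability and quantum separability. The paper's own proof is a one-sentence ``translate the quantum result into the covariance setting,'' whereas you spell out the Wishart/induced-state identification and flag the real-versus-complex adaptation that the paper silently absorbs into ``adapted from''; your treatment is more careful, but the route is the same.
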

\begin{proof}[Proof of Theorem \ref{thm:wishart}]
     Translating the phase transition result for quantum entanglement {\normalfont\cite[Theorem 2.3]{aubrun2014entanglement}} in the context of Kronecker-separability of covariance implies this theorem.
\end{proof}

Since $\rank\, \hbSigma_n = \min\{ d^2, n \}$ with probability $1$, $\hbSigma_n$ is positive definite with probability $1$ when $n \geq d^2$. 
Theorem \ref{thm:wishart} implies that when $d$ is sufficiently large so that $\theta(d) \geq 4 d^2$, a random covariance $\hbSigma_n$ with $n = 2d^2$ is positive definite with probability $1$, but it is not K-S with probability at least $1 - 2 \exp( - c_{1/2} \cdot d^3 )$. 

\section{Negative results on Kronecker-separability}\label{sec:questions}

In this section, we explore the limitations of Kronecker-separability by addressing three key questions from \Cref{sec:introduction}: representability (Question \ref{question:representability}), certifiability (Question \ref{question:certifiability}), and approximability (Question \ref{question:approximability}). 
We examine how common non-separable covariances are, then we address the complexity of certifying separability, and finally we discuss the difficulty of Kronecker-separable (K-S) approximation.

\subsection{Abundance of non-Kronecker-separable covariances}\label{sec:separability}
In \Cref{sec:sep_covariance}, we presented examples of K-S covariances and counterexamples. 
Here, we consider the broader question: \emph{“How frequently do K-S covariances arise in nature?”} 
We begin by introducing a way to quantify the size of the subset of K-S covariances, relative to the set of all multiway covariances. 
Then we show that the relative size of the non-separable subset dominates that of the K-S subset, when the dimension of the underlying multiway space $\cH$ is large (Theorem~\ref{thm:abundance}).


\subsubsection{Quantifying the size of sets of covariances}\label{sec:size_set}
Let $\cH = \bigotimes_{k=1}^K \cH_k$ be a multiway Hilbert space. 
Recall that $\cBsa_+(\cH)$ denotes the set of all possible multiway covariances on $\cH$, or equivalently, all positive semidefinite (PSD) operators on $\cH$. 
Define 
\[
    \cBsep(\cH) \coloneqq \left\{ \bSigma \in \cBsa_+(\cH) : \bSigma \text{ is Kronecker-separable} \right\}.
\]

Both $\cBsa_+(\cH)$ and $\cBsep(\cH)$ are convex cones in $\cBsa(\cH)$, the space of self-adjoint operators on $\cH$. 
Since these cones are unbounded, directly comparing their sizes is not meaningful. 
To remedy this, we focus on their \emph{unit-trace} affine slices, which are bounded convex sets:
\begin{equation}\label{eqn:unit_trace}
    \begin{aligned}
        \cSsa_+(\cH) \coloneqq \cBsa_+(\cH) \cap \cT_1,
        \qquad
        \cSsep(\cH) \coloneqq \cBsep(\cH) \cap \cT_1,
    \end{aligned}
\end{equation}
where $\cT_1 \coloneqq \{ \bSigma \in \cBsa(\cH): \tr \bSigma = 1 \}$ is the unit-trace affine subspace\footnote{This subspace is orthogonal to the ray spanned by the identity operator on $\cH$, which lies in the interiors of both $\cSsa_+(\cH)$ and $\cSsep(\cH)$.} of codimension $1$.

\medskip
\paragraph{``Width'' of a set}
We use two standard ``width'' measures from convex geometry to gauge the relative sizes of $\cSsep(\cH)$ and $\cSsa_+(\cH)$; see the supplement (\ref{sec:width}) for more details.

\begin{definition}\label{defn:volume_radius}
    For a nonempty, bounded, Lebesgue measurable set $\cS \subset \RR^d$, the \emph{volume radius} of $\cS$ is 
     $
        \vrad(\cS) \coloneqq \Big(\frac{\vol(\cS)}{\vol(\bbB^d_2)}\Big)^{1/d}
    $
    where $\bbB_2^d$ is the $d$-dimensional unit $\ell_2$-norm ball and $\vol(\cdot)$ denotes the $d$-dimensional Lebesgue measure.
\end{definition}

\begin{definition}\label{defn:mean_width}
    For a nonempty, bounded set $\cS \subset \RR^d$, the \emph{mean width} of $\cS$ is 
    $
        w(\cS) \coloneqq \bbE_{\bu \sim \mu} \big[ \sup_{\bx \in \cS} \langle \bu, \bx \rangle \big]
            = \int_{\bbS^{d-1}} \sup_{\bx \in \cS} \langle \bu, \bx \rangle ~d\mu(\bu)
    $
    where $\bbS^{d-1}$ is the unit sphere in $\RR^d$ and $\mu$ is the Haar probability measure on $\bbS^{d-1}$.
\end{definition}

When $\cS$ is an $\ell_2$-ball in $\RR^d$ of radius $r$, its volume radius and mean width are both equal to $r$. 
These notions of ``width'' enable comparing the size of $\cSsep(\cH)$ relative to $\cSsa_+(\cH)$.

\subsubsection{Asymptotic ratio of Kronecker-separable covariances}\label{sec:abundance}
Now we restate the representability question (Question~\ref{question:representability}) more formally as follows.

\begin{problem}
    \label{problem:separable}
    Given $\cH = \bigotimes_{k=1}^K \cH_k$ with $K \geq 2$ and $\dim \cH_k \geq 2, ~\forall k \in [K]$, how large is $\cSsep(\cH)$ relative to $\cSsa_+(\cH)$, measured by the ratios:
    \[
        \rho_v (\cH) \coloneqq \frac{ \vrad \big( \cSsep(\cH)\big) }{ \vrad\big( \cSsa_+(\cH) \big) }
        \qquad
        \text{and}
        \qquad
        \rho_w(\cH) \coloneqq \frac{ w \big( \cSsep(\cH) \big) }{ w\big( \cSsa_+(\cH) \big) }?
    \]
\end{problem}

Both ratios $\rho_v(\cH)$ and $\rho_w(\cH)$ lie in $[0, 1]$. 
If most $\bSigma \in \cBsa_+(\cH)$ were K-S, we would expect these ratios to approach 1. 
However, the following theorem shows that they diminish to $0$ as the dimensionality of $\cH$ increases.

\begin{theorem}\label{thm:abundance}
    Let $d, K \in \NN$ such that $d \geq 2$ and $K \geq 2$. 
    The following bounds hold:
    \begin{align*}
        \max\left\{ \rho_v\left( \CC^{d} \otimes \CC^{d} \right), ~ \rho_w\left( \CC^{d} \otimes \CC^{d} \right) \right\}
            &\leq \frac{10}{\sqrt{d}},\\
        \max\left\{ \rho_v\left( \big(\CC^{2}\big)^{\otimes K} \right), ~ \rho_w\left( \big(\CC^{2}\big)^{\otimes K} \right)\right\}
            &\leq \sqrt{48 e} \frac{\sqrt{ K \log K }}{2^{K/2}}.
    \end{align*}
\end{theorem}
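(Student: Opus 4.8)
\emph{Proof proposal.} The plan is to recast the statement in the language of quantum information and then invoke the asymptotic-geometric estimates on the volume and mean width of the set of separable quantum states, following \cite{aubrun2017alice}. The first step is to set up the dictionary: after unit-trace normalization, $\cSsa_+(\cH)$ is precisely the set of density operators on $\cH$, and by Definition \ref{defn:separable} together with Proposition \ref{prop:separable_equivalence}, $\cSsep(\cH)$ is precisely the set of separable states on $\cH = \bigotimes_{k=1}^K \cH_k$; writing a trace-one separable operator as a convex combination of mode-wise pure states, one moreover sees that $\cSsep(\cH) = \conv\big\{ \bigotimes_{k=1}^K (\bv_k \otimes \bv_k^*) : \bv_k \in \cH_k,\ \|\bv_k\| = 1 \big\}$ is the convex hull of the pure product states. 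Hence $\rho_v(\cH)$ and $\rho_w(\cH)$ are the relative volume radius and relative mean width of the separable states inside the full state space, and the two cases of the theorem correspond to $n \coloneqq \dim_{\CC}\cH$ equal to $d^2$ and to $2^K$.

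The second step is to lower-bound the denominators. The state space $\cSsa_+(\CC^n)$ is a classical object of asymptotic convex geometry, living in the $(n^2-1)$-dimensional affine subspace $\cT_1$; both its volume radius and its mean width are of order $n^{-1/2}$, and I would quote from \cite{aubrun2017alice} explicit lower bounds $\vrad\big(\cSsa_+(\CC^n)\big) \geq c\, n^{-1/2}$ and $w\big(\cSsa_+(\CC^n)\big) \geq c\, n^{-1/2}$ with an absolute constant $c > 0$. The volume-radius bound is the nontrivial one here: the in-ball of $\cSsa_+(\CC^n)$ centered at $n^{-1}\bI$ has radius only of order $n^{-1}$, so Urysohn applied to the in-ball would be far too weak, and likewise the cruder containment of $\cSsep$ in the positive-partial-transpose body does not help, since that body has volume radius of the same order $n^{-1/2}$ as the full state space.

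The third step is to upper-bound the numerators and divide. Since $\cSsep(\cH)$ is a convex body, Urysohn's inequality gives $\vrad\big(\cSsep(\cH)\big) \leq w\big(\cSsep(\cH)\big)$, so it suffices to bound the mean width, equivalently --- up to the factor $\kappa_{n^2-1}$, which is of order $n$ --- the Gaussian width $\wg\big(\cSsep(\cH)\big) = \bbE_{\bg}\, \sup \big\langle \bg,\ \bigotimes_{k=1}^K (\bv_k \otimes \bv_k^*) \big\rangle$, the supremum being over $\|\bv_k\| = 1$ for all $k \in [K]$ and $\bg$ a standard Gaussian self-adjoint operator (a GUE matrix). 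This supremum runs over the compact manifold $\prod_{k=1}^K \{\bv_k \in \cH_k : \|\bv_k\| = 1\}$, whose metric entropy at a fixed scale is of order $d$ in the bipartite case and of order $K$ in the multiqubit case; a chaining estimate for the Gaussian quadratic form $\bv \mapsto \langle \bg, \bv \otimes \bv^* \rangle$ over this manifold, controlled by the Gaussian comparison inequalities \cite{chevet1978series, gordon1985some}, then yields $\wg\big(\cSsep(\CC^d \otimes \CC^d)\big) \leq C \sqrt{d}$ and $\wg\big(\cSsep((\CC^2)^{\otimes K})\big) \leq C \sqrt{K \log K}$. Dividing by $\kappa_{n^2-1}$ and combining with the denominator bounds of the previous step gives $\max\{\rho_v, \rho_w\} \leq C' d^{-1/2}$ in the first case and $\leq C' 2^{-K/2} \sqrt{K \log K}$ in the second; a careful accounting of the absolute constants appearing in the state-space estimates and in the chaining bound produces the explicit factors $10$ and $\sqrt{48e}$.

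The genuinely hard inputs --- the two-sided volume-and-width estimates for the state space and the sharp width bound for the separable states --- I would import from \cite{aubrun2017alice} rather than reprove. If one wanted to be self-contained, the main obstacle would be the width bound for $\cSsep$ in the multiqubit case: one must supremize a Gaussian quadratic form over a $K$-fold product of unit spheres in $\CC^2$, and although the metric entropy grows only linearly in $K$, obtaining a bound that loses at most a $\sqrt{\log K}$ factor relative to the corresponding bound for a single Euclidean ball --- rather than a full power of $K$ from a naive union bound over a fine net --- requires the Gaussian comparison machinery rather than an elementary argument. Tracking the explicit numerical constants through these inequalities is the remaining, more tedious, source of difficulty.
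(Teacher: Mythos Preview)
Your proposal is correct and follows essentially the same route as the paper: reduce both $\rho_v$ and $\rho_w$ to the single ratio $w\big(\cSsep(\cH)\big)/\vrad\big(\cSsa_+(\cH)\big)$ via Urysohn (applied once to the numerator of $\rho_v$ and once to the denominator of $\rho_w$), lower-bound the denominator using the explicit volume formula for $\cSsa_+(\CC^n)$, and upper-bound the numerator's Gaussian width by Chevet--Gordon in the bipartite case and an entropy argument in the multiqubit case. One small overestimate of difficulty in your last paragraph: the paper's multiqubit width bound is in fact elementary --- a single-scale $\varepsilon$-net on each factor sphere in $\CC^2$ followed by the sub-Gaussian maximum bound $w_G(\cP)\leq\sqrt{2\log|\cP|}$ and an optimization over $\varepsilon$ --- so no chaining or Gaussian comparison machinery is needed there.
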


Hence, both $\rho_v(\cH)$ and $\rho_w(\cH)$ diminish to $0$ in the two scenarios considered: (1) the bipartite case, where $\cH = \CC^d \otimes \CC^d$ and $d \to \infty$; and (ii) the multipartite case, where $\cH = (\CC^2)^{\otimes K}$ and $K \to \infty$. 
Thus, almost all covariances are non-separable in high dimensions---i.e., the probability that a randomly drawn PSD operator $\bSigma \in \cBsa_+(\cH)$ admits a Kronecker decomposition of the form \eqref{eqn:separable} becomes negligible, regardless of rank parameter $r \in \NN$.
The proof of Theorem \ref{thm:abundance} can be found in the supplement (\ref{sec:proof_thm.1}). 

Given this abundance of non-K-S covariances, it is natural to consider Questions \ref{question:certifiability} and \ref{question:approximability} that can be restated as follows. 
\begin{itemize}[topsep=6pt, itemsep=3pt]
    \item
    \emph{\textbf{Certification}} (\Cref{sec:certification}): 
    Can we efficiently determine whether a given covariance $\bSigma \in \cBsa_+(\cH)$ is K-S or not?
    
    \item 
    \emph{\textbf{Approximation}} (\Cref{sec:approximation}): 
    If $\bSigma \in \cBsa_+(\cH)$ is not K-S, can we compute a K-S approximation of it?
\end{itemize}
We address these questions in the sections that follow.

\subsection{Certification of Kronecker-separability}\label{sec:certification}
We restate Question \ref{question:certifiability} as follows.

\begin{problem}\label{problem:certification}
    Let $\cH = \bigotimes_{k=1}^K \cH_k$. Given $\bSigma \in \cBsa_+(\cH)$, decide whether $\bSigma \in \cBsep(\cH)$ or not.
\end{problem}

When restricted to the unit-trace setting ($\tr(\bSigma)=1$), this reduces to the so-called \emph{quantum separability problem}. 
Since Kronecker-separability of covariances is invariant under positive scaling, the unit-trace constraint does not alter the essence of Problem \ref{problem:certification}, making it equivalent to the quantum separability problem.

For clarity, we focus on the bipartite case $\cH = \cH_1 \otimes \cH_2$ in this section. 
We first overview some sufficient and necessary conditions for certifying separability in \Cref{sec:verifiable}, and then discuss NP-hardness of Problem \ref{problem:certification} in \Cref{sec:NPhard_wmem}. 
Since certifying separability is already NP-hard in the bipartite setting, this hardness extends to the general case with $K \geq 2$.

\subsubsection{General certificates of Kronecker-separability}\label{sec:verifiable}
In quantum physics, several sufficient or necessary conditions have been developed that can help certify the separability of quantum states. 
These criteria can prove or disprove whether a given density operator represents a separable quantum state, but there is no general criterion that exactly characterize all separable states. 
Here, we adapt two fundamental results from quantum information theory to our setting. 
For more details on quantum separability, see \cite{horodecki2009quantum, guhne2009entanglement} and the references therein. 

\medskip
\paragraph{Sufficient condition: Proximity to the identity \cite{gurvits2002largest}} 
For any $\bSigma \in \cBsa_+(\FF^{d_1} \otimes \FF^{d_2})$, 
\begin{equation}\label{eqn:suff_gurvits_barnum}
    \left\| \frac{1}{\tr \bSigma} \bSigma - \frac{1}{d_1 d_2} \bI_{d_1 d_2} \right\|^2 \leq \frac{1}{d_1 d_2 ( d_1 d_2 - 1 )}
        ~~\implies~~
        \bSigma \in \cBsep\big( \FF^{d_1} \otimes \FF^{d_2} \big).
\end{equation}
Here, $\| \cdot \|$ is the Hilbert-Schmidt norm and $\bI_{d_1 d_2}$ denotes the identity matrix of size $d_1 d_2$ by $d_1 d_2$. 
The above implies that covariances close to the scaled identity are K-S.

\medskip
\paragraph{Necessary condition: The Peres-Horodecki criterion \cite{peres1996separability, horodecki2001separability}} 
Also known as the \emph{positive partial transpose} (PPT) criterion, the Peres–Horodecki criterion states that if $\bSigma$ is the density matrix of a separable bipartite quantum state, then its partial transpose $\Phi(\bSigma)$ must be PSD. 

To define the partial transpose, consider the canonical isomorphism $\cB(\cH_1 \otimes \cH_2) \cong \cB(\cH_1)\otimes \cB(\cH_2)$.  Any operator $\bA \in \cB(\cH_1 \otimes \cH_2)$ can be written as 
$\bA = \sum_{a=1}^N \bA_a^{(1)} \otimes \bA_a^{(2)}$, 
where each $\bA_a^{(1)} \in \cB(\cH_1)$ and $\bA_a^{(2)} \in \cB(\cH_2)$. 
The partial transpose with respect to the second factor is the linear map $\Phi: \cB(\cH_1 \otimes \cH_2)\!\to\!\cB(\cH_1 \otimes \cH_2)$ defined by
\[
  \Phi(\bA)\;=\;\sum_{a=1}^N \bA_a^{(1)} \otimes \bigl(\bA_a^{(2)}\bigr)^\top.
\]
Hence, if $\bSigma \in \cBsa_+(\cH_1 \otimes \cH_2)$ is K-S, say $\bSigma = \sum_{a=1}^r \bSigma_a^{(1)} \otimes \bSigma_a^{(2)}$ with each $\bSigma_a^{(k)}\in\cBsa_+(\cH_k)$, then $\Phi(\bSigma)\;=\;\sum_{a=1}^r \bSigma_a^{(1)} \otimes \bigl(\bSigma_a^{(2)}\bigr)^\top$ is also K-S.  
Thus, for $\bSigma$ to be separable, a necessary condition is that $\Phi(\bSigma)$ be PSD.  
For instance, $\bSigma_{\bell}$ in Example~\ref{example:entangled} is not K-S because its partial transpose $\Phi(\bSigma_{\bell})$ is not PSD (see \ref{sec:addtl_bell_state}).  

While the PPT criterion can detect certain entangled (non-separable) states when $\Phi(\bSigma)$ is not PSD, it is only a necessary condition.  
It becomes sufficient in special cases, e.g.\ when $d_1d_2 \le 6$~\cite{horodecki2001separability} or when $\rank(\bSigma)\le\max(d_1,d_2)$~\cite{horodecki2000operational}.

\subsubsection{NP-hardness of certifying Kronecker-separability}\label{sec:NPhard_wmem}
Although several conditions can test separability, the quantum separability problem---and by extension, the Kronecker-separability problem---is fundamentally difficult. 
Specifically, certifying separability of a quantum state is known to be NP-hard. 
Consequently, certifying Kronecker-separability for a multiway covariance (Problem \ref{problem:certification}) is also NP-hard. 
This section outlines that complexity result.

Gurvits~\cite{gurvits2003classical} proved the quantum separability problem to be NP-hard by relating it to the weak membership problem for separable density operators. The weak membership problem is a decision problem that verifies membership in a set up to a specified error tolerance.

\begin{definition}[Weak membership problem]\label{def:weak_membership}
    Let $\cK \subset \RR^d$ be a convex body\footnote{A convex body is a compact convex set with non-empty interior.} and let $\delta \geq 0$ be a rational number. 
    The \emph{weak membership problem for $\cK$ with error $\delta$}, denoted $\wmem_{\delta}(\cK)$, is defined as follows: given a rational vector $\bx \in \RR^d$,  
    \begin{enumerate}[label=\arabic*)]
        \item
        either assert $\bx \in \cK_{+\delta} \coloneqq \bigcup_{\bx \in \cK} B(\bx, \delta)$,
        \item
        or assert $\bx \not\in \cK_{-\delta} \coloneqq \{\bx \in \cK: B(\bx, \delta) \subset \cK \}$,
    \end{enumerate}
    where $B(\bx, \delta) \coloneqq \{ \by \in \RR^d: \| \by - \bx \| \leq \delta \}$.
\end{definition}

Gurvits \cite{gurvits2003classical} showed that $\wmem_{\delta}(\cSsep(\CC^{d_1}\otimes \CC^{d_2}))$ is NP-hard, meaning no polynomial-time (in $d_1,d_2,\delta$) algorithm can be guaranteed to correctly decide separability with a small tolerance $\delta$.  
Later, Aaronson (as documented by Ioannou \cite{ioannou2007computational}) noted that Gurvits' result requires $\delta$ to be exponentially small in $d_1,d_2$. 
A stronger NP-hardness result was subsequently proved by Gharibian \cite{gharibian2008strong}, showing that separability remains NP-hard under weaker (i.e.\ larger) error tolerances. 
As this result is needed for the K-S result to be stated in Corollary \ref{coro:kron_sep}, we rephrase Gharibian’s theorem below and provide a proof in the supplement (\ref{sec:sketch_NPhard}).

\begin{theorem}[{\cite[Theorem 1]{gharibian2008strong}}]\label{thm:NP-hard}
    Let $d_1, d_2 \in \NN$ such that $d_1 \geq d_2 \geq 2$. 
    Then there exists a constant $C > 0$ such that $\wmem_{\delta}\left( \cSsep( \CC^{d_1} \otimes \CC^{d_2} ) \right)$ is NP-hard for all $\delta \leq C \cdot d_1^{-16} d_2^{-20.5}$. 
    In other words, $\wmem_{\delta}\left( \cSsep( \CC^{d_1} \otimes \CC^{d_2} ) \right)$ is strongly NP-hard.
\end{theorem}

Translating this to Kronecker-separability of covariances, we obtain the following.

\begin{corollary}\label{coro:kron_sep}
    Let $d_1, d_2 \in \NN$ such that $d_1 \geq d_2 \geq 2$, and let $\cH = \CC^{d_1} \otimes \CC^{d_2}$. 
    Then certifying Kronecker-separability of multiway covariances (Problem \ref{problem:certification}) is strongly NP-hard.
\end{corollary}
\begin{proof}[Proof of Corollary \ref{coro:kron_sep}]
    A covariance $\bSigma\in\cBsa_+(\cH)$ is K-S if and only if $\tfrac{1}{\tr(\bSigma)}\bSigma$ is a separable density operator. 
    Theorem~\ref{thm:NP-hard} implies the NP-hardness of Problem~\ref{problem:certification}.
\end{proof}

Thus, certifying whether a given covariance is K-S is computationally intractable in the worst case, mirroring the NP-hardness of the quantum separability problem.

\subsection{Kronecker-separable approximation of covariance}\label{sec:approximation} 

Even when a covariance is not exactly K-S, approximating it with a K-S model may still be useful. 
Given $\bSigma \in \cBsa_+(\cH)$, we seek a surrogate $\bSigma' \in \cBsep(\cH)$ close to $\bSigma$. 
We define the \emph{best K-S approximation} as follows.

\begin{definition}
\label{defn:sep_approx}
    The \emph{best Kronecker-separable (K-S) approximation} of $\bSigma \in \cBsa(\cH)$ is 
    \begin{equation}\label{eqn:best_separable}
        \pisep(\bSigma) \coloneqq \argmin_{\bSigma' \in \cBsep(\cH)} \| \bSigma' - \bSigma \|^2
    \end{equation}
    where $\| \cdot \|$ is the Hilbert-Schmidt norm in $\cBsa(\cH)$.
\end{definition}

Since $\cBsep(\cH)$ is a nonempty, closed, convex subset of $\cBsa_+(\cH)$, every $\bSigma \in \cBsa_+(\cH)$ admits a unique minimizer in \eqref{eqn:best_separable}, so $\pisep(\bSigma)$ is well-defined. 
Now we restate \Cref{question:approximability} formally.
\begin{problem}\label{problem:approximation}
    Let $\cH = \bigotimes_{k=1}^K \cH_k$. 
    Given $\bSigma \in \cBsa_+(\cH)$, compute $\pisep(\bSigma)$.
\end{problem}

Computing the best K-S approximation is challenging, as stated below.

\begin{theorem}\label{thm:approx}
    Let $d_1, d_2 \in \NN$ such that $d_1 \geq d_2 \geq 2$, and let $\cH = \CC^{d_1} \otimes \CC^{d_2}$. 
    Then the K-S approximation problem (Problem~\ref{problem:approximation}) is NP-hard.
\end{theorem}
\begin{proof}[Proof of Theorem \ref{thm:approx}]
    Suppose there were a polynomial-time algorithm that can approximate $\pisep(\bSigma)$ within Hilbert–Schmidt distance $\delta$, for any arbitrary $\bSigma$ and $\delta > 0$. 
    Then, for any $\bSigma\in\cBsa_+(\cH)$, this would allow solving the weak membership problem (Definition~\ref{def:weak_membership}) in polynomial time by checking whether $\|\hbSigma - \bSigma\|\le\delta$. 
    Since weak membership is NP-hard (\Cref{thm:NP-hard}), no such polynomial-time algorithm exists.
\end{proof}

Thus, there is no polynomial-time method to find $\pisep(\bSigma)$ for all $\bSigma\in\cBsa_+(\cH)$, highlighting the computational difficulty of such approximation problems in general.

\subsection{Summary and implications} 
We have identified several fundamental barriers to efficiently approximating multiway covariances using K-S models. 

\begin{enumerate}
    \item
     A K-S decomposition is not always possible for all PSD operators. 
     A large fraction of multiway covariances cannot be represented in this way.
     
    \item 
    The determination of whether or not a given covariance is K-S is NP-hard in general.
    
    \item
    The computation of the best K-S approximation is also NP-hard.
\end{enumerate}
These results highlight the need for caution when applying K-S models, especially in high-dimensional settings. 
They establish that K-S representations are not universally valid models for multiway covariance without additional assumptions on data or covariance structure.

\section{Kronecker-separable approximation: Numerical experiments}
\label{sec:experiments}

Notwithstanding the NP-hardness results in \Cref{sec:questions}, Kronecker-separable (K-S) models have often proved effective in approximating multiway covariances in real-world applications. 
In this section, we investigate two iterative algorithms---Frank–Wolfe (FW) and Gradient descent (GD)---for computing K-S approximations, and provide empirical evidence that these methods often succeed in finding high-quality K-S approximations, even though no polynomial-time method can universally guarantee a globally optimal solution.

We begin by reviewing the \emph{unit-trace} formulation of the approximation problem (a common convention in quantum mechanics). 
Next, we describe the two algorithms (FW and GD) in a \emph{general $K$-partite setting}, i.e., $\cH = \bigotimes_{i=1}^K \cH_i$. 
Then we present two experiments:
\begin{itemize}
    \item 
    \textbf{Experiment 1} (\Cref{sec:exp_random_separable}) considers random separable states to verify whether each method can reliably recover the correct decomposition.
    \item 
    \textbf{Experiment 2} (\Cref{sec:quantum_states}) focuses on canonical bipartite quantum states (isotropic and Werner), whose exact separability thresholds and “best separable approximations” (BSA) are known, providing a more challenging benchmark.
\end{itemize}
For concreteness, most numerical illustrations focus on bipartite ($K=2$) scenarios, but the methods naturally extend to higher-order tensor decompositions.

\medskip
\paragraph{Unit-trace formulation of Kronecker-separable approximation} 
Recall from \eqref{eqn:best_separable} that the K-S approximation seeks to solve $\min_{\bSigma' \in \cBsep(\cH)} \| \bSigma' - \bSigma \|^2$. 
This is equivalent to computing the best \emph{unit-trace} separable approximation $\upisep(\bSigma)$ of the normalized operator $\bSigma/\tr(\bSigma)$, i.e., 
\begin{equation}\label{eqn:KS_approx_unit}
    \upisep(\bSigma) 
    \coloneqq \underset{\bX \,\in\, \cSsep(\cH)}{\argmin}
        \,\biggl\| \, \bX - \frac{\bSigma}{\tr(\bSigma)} \, \biggr\|^2,
   \qquad
   \cSsep(\cH)=\cBsep(\cH)\,\cap\,\cT_1.
\end{equation}
Once $\upisep(\bSigma)$ is found, scaling by trace recovers $\pisep(\bSigma)$ because
\[
    \upisep(\bSigma) 
    = \frac{1}{\tr\!\bigl(\pisep(\bSigma)\bigr)} 
        \,\pisep(\bSigma)
    \qquad\text{and}\qquad
    \pisep(\bSigma) 
    = \frac{\langle\upisep(\bSigma), \,\bSigma\rangle}{\|\upisep(\bSigma)\|^2}
        \,\upisep(\bSigma).
\]
Hence, focusing on the unit-trace setting does not alter the essential nature of the problem. 
This formulation aligns with quantum-mechanical conventions, where $\bSigma/\tr(\bSigma)$ is viewed as a density matrix, and $\cSsep(\cH)$ as the set of separable states.

\subsection{Iterative algorithms for Kronecker-separable approximation}
\label{sec:exp_setup_algos}
In this section, we present two algorithms for approximating the solution of 
\begin{equation}\label{eqn:KS_approx_unit_iter}
   \min_{\bX \in \cSsep(\cH)} \bigl\| \bX - \bSigma \bigr\|^2,
   \quad
   \bSigma \in \cSsa_+(\cH).
\end{equation}

\subsubsection{Frank-Wolfe algorithm}
\label{sec:alg_Frank_Wolfe}

The Frank-Wolfe (FW) method~\cite{frank1956algorithm} is a classical first-order algorithm well-suited for optimization over convex hulls. 
In our setting, the feasible set $\cSsep(\cH)$ is the convex hull $\conv(\cA)$ of all rank-1 product projectors, where
\begin{equation}\label{eqn:setA}
    \cA 
        = \Bigl\{
          (\bv_1 \otimes \cdots \otimes \bv_K) (\bv_1 \otimes \cdots \otimes \bv_K)^\ast
          \,\Bigm|\,
          \|\bv_i\|=1,\, \bv_i \in \cH_i,\, 1 \leq i \leq K
       \Bigr\}.
\end{equation}

\begin{algorithm}[ht]
\caption{Frank-Wolfe algorithm to find $\upisep(\bSigma)$}
\label{alg:frank_wolfe}
\begin{algorithmic}[1]
    \Require $\bSigma \in \cSsa_+(\cH)$ where $\cH = \bigotimes_{i=1}^K \cH_i$;
        \ maximum iterations $T$;
        \ stepsize rule $\{ \gamma^{(t)} \}$
    \Ensure K-S approximation $\upisep(\bSigma) \in \cSsep(\cH)$
    
    \State \textbf{Initialize:} $\hbSigma^{(0)} \gets \frac{1}{\dim(\cH)}\,\bI$ \Comment{Or any other initialization in $\cSsep(\cH)$}
    
    \For{$t = 0, 1, \dots, T-1$}
        \State \textbf{(Linear minimization subproblem)}:
        \[
           \bZ^{(t)} \in \argmin_{\bZ \in \cA}\;
              \bigl\langle \bZ,\; \hbSigma^{(t)} - \ovbSigma \bigr\rangle
        \]
    
        \State \textbf{(Update)}:
        \[
           \hbSigma^{(t+1)} \gets
              \bigl(1 - \gamma^{(t)}\bigr)\,\hbSigma^{(t)} + \gamma^{(t)}\,\bZ^{(t)} 
        \]
    \EndFor
    
    \State $\upisep(\bSigma) \gets \hbSigma^{(T)}$
\end{algorithmic}
\end{algorithm}

\medskip
\paragraph{Procedure}  
As summarized in Algorithm~\ref{alg:frank_wolfe}, the FW method has three main steps.

\begin{enumerate}[topsep=2pt,itemsep=2pt,leftmargin=2em]
    \item \emph{Initialization:}
    Choose any \(\hbSigma^{(0)} \in \cSsep(\cH)\), e.g., the trace-normalized identity $\frac{1}{\dim(\cH)}\,\bI$.
    \item \emph{Linear minimization subproblem (LMS):}
    Given the current solution $\hbSigma^{(t)}$, find
    \begin{equation}\label{eqn:LMS}
        \bZ^{(t)} \in\argmin_{\bZ \,\in\,\cA}
            \bigl\langle \bZ,\;\,\hbSigma^{(t)} - \bSigma \bigr\rangle.
    \end{equation}
    Note that finding $\bZ^{(t)}$ is equivalent to finding a rank-1 tensor product $\bv_1\otimes\cdots\otimes\bv_K$ that maximizes $(\bv_1 \otimes \cdots \otimes \bv_K)^T (\bSigma - \hbSigma^{(t)}) (\bv_1 \otimes \cdots \otimes \bv_K)$. 
    \item \emph{Frank-Wolfe update:} Update the incumbent solution as
    \[
        \hbSigma^{(t+1)} 
            \gets (1-\gamma^{(t)}) \,\hbSigma^{(t)} +\gamma^{(t)} \,\bZ^{(t)},
    \]
    where the step size $\gamma^{(t)}$ can follow a fixed diminishing schedule (e.g., $\gamma^{(t)}= 2/(t+2)$) or it can be determined by other rules such as line-search or fully corrective updates \cite{jaggi2013revisiting}. 
\end{enumerate}

\medskip
\paragraph{Convergence guarantees and computational complexity}
If the LMS \eqref{eqn:LMS} is solved (approximately) well in each iteration, the FW algorithm (Algorithm \ref{alg:frank_wolfe}) converges to the global optimum $\upisep(\bSigma)$. 
A proof of the following theorem is in the supplement (\ref{sec:proof_thm.5}). 

\begin{theorem}\label{thm:convergence_FW}
    Let $\bSigma \in \cSsa_+(\cH)$ and let $\eta > 0$. 
    Suppose that Algorithm \ref{alg:frank_wolfe} can (approximately) solve the linear minimization subproblem in Line 3 by finding $\hbZ^{(t)}$ such that 
    \begin{equation}\label{eqn:approx_grad}
        \left\langle \hbZ^{(t)}, \, \hbSigma^{(t)} - \bSigma \right \rangle \leq \left \langle \bZ^{(t)}, \, \hbSigma^{(t)} - \bSigma  \right \rangle + \gamma^{(t)} \eta,
        \qquad \forall t \in \NN.
    \end{equation}
    Then
    \[
        \big\| \hbSigma^{(t)} - \upisep(\bSigma) \big\|^2 
            \leq  \frac{8 (1 +\eta)}{t+2},
        \qquad \forall t \in \NN.
    \]
\end{theorem}
If the LMS \eqref{eqn:LMS} can be solved to accuracy $\eta$, the K-S approximation $\upisep(\bSigma)$ can be computed to an arbitrary precision $\delta \geq 0$ within $O(\eta/\delta^2)$ iterations. 
However, for $K\geq2$, the LMS \eqref{eqn:LMS} reduces to a tensor spectral norm problem, which is NP-hard \cite{hillar2013most}.
In practice, heuristics such as non-convex parameterizations\footnote{When $\cH = \FF^{d_1} \otimes \FF^{d_2}$, it is customary to parameterize $Z = \bv_1\bv_1^{*} \otimes \bv_2\bv_2^{*}$ where $(\bv_1, \bv_2) \in \bbS^{d_1-1} \times \bbS^{d_2-1}$.} and alternating minimization (see \ref{sec:supplement_experiments} for details) can be used to approximately solve the LMS \eqref{eqn:LMS} in order to find a nearly optimal $\bZ^{(t)}$. 
Due to the NP-hardness result (Theorem \ref{thm:approx}), there cannot exist a polynomial-time algorithm that solves (\ref{eqn:best_separable}) unconditionally. 
Despite these theoretical barriers, FW can still perform effectively in moderate-dimensional settings, especially if local searches can identify an approximately optimal $\hat{Z}^{(t)}$.

\subsubsection{Gradient-descent methods}
\label{sec:alg_Gradient_Descent}
As an alternative to Frank-Wolfe, one can parameterize a candidate separable operator directly, and optimize its parameters directly via gradient-based methods. 
Concretely, fix a model size $\Napprox \in \NN$ and write
\[
  \hbSigma(\theta)
  = \sum_{i=1}^{\Napprox}
      w_i(\theta)\;\bigl(\bv_{i,1}(\theta)\,\otimes\cdots\otimes\,\bv_{i,K}(\theta)\bigr)
      \bigl(\bv_{i,1}(\theta)\,\otimes\cdots\otimes\,\bv_{i,K}(\theta)\bigr)^\ast,
\]
where $\theta$ collects all trainable parameters, from which we obtain (1) scalar weights $\{w_i(\theta) \}$ such that $w_i(\theta)\ge 0$ with $\sum_i w_i(\theta)=1$, and (2) unit vectors $\{\bv_{i,j (\theta) }: (i,j) \}$ with $\bv_{i,j}(\theta)\in\cH_j$. 
By construction, $\hbSigma(\theta) \in \cSsep(\cH)$ for all $\theta$, as it is a sum of $N_{\mathrm{approx}}$ rank-1 expansions (cf. Definition \ref{defn:separable.2}). 
Then we optimize $\theta$ by solving
\[
    \min_{\theta}\;
        L(\theta)
        \qquad\text{where}\qquad L(\theta) = \bigl\|\hbSigma(\theta) - \bSigma\bigr\|^2
\] 
using standard gradient-descent-type optimizers like SGD or Adam \cite{kingma2015adam}.

\begin{algorithm}[t!]
\caption{Shallow Gradient Descent (GD) for K-S Approximation}
\label{alg:gradient_descent}
\begin{algorithmic}[1]
    \Require
      $\bSigma \in \cSsa_+(\cH)$ where $\cH = \bigotimes_{i=1}^K \cH_i$;\
      model size $N_{\mathrm{approx}}$;\
      max iterations $T$;\
      stepsize rule $\{ \eta_t \}$.
    \Ensure
      K-S approximation $\upisep(\bSigma) \in \cSsep(\cH)$.
    \State \textbf{Initialize:} 
      randomly choose parameters $\theta^{(0)} \gets \bigl(\{ \tw_i\},\;\{\tbv_{i,j}\}\bigr) \in \bigl( \RR \times (\RR^d)^K \bigr)^{N_{\mathrm{approx}}}$
    \For{$t = 0, 1, \dots, T-1$}
        \State \textbf{(Forward pass)}: 
        Construct the current approximation 
        \[
            \hbSigma(\theta^{(t)})
            \;=\;
            \sum_{i=1}^{N_{\mathrm{approx}}} 
            w_i
            \Bigl(\bv_{i,1}\,\otimes\cdots\otimes\,\bv_{i,K}\Bigr)
            \Bigl(\bv_{i,1}\,\otimes\cdots\otimes\,\bv_{i,K}\Bigr)^{\ast}.
        \]
        \qquad where
        \[
            w_i = \softmax_i( \tw_1, \dots, \tw_{\Napprox}) \coloneqq \frac{e^{\tw_i}}{\sum_{i=1}^{\Napprox} e^{\tw_i}} \quad \text{and} \quad \bv_{i,j} = \frac{ \tbv_{i,j}}{ \|\tbv_{i,j}\|}
        \]
        \State \textbf{(Backward pass)}: 
        Obtain $\nabla_\theta L(\theta^{(t)})$ where $L(\theta) \;=\; \bigl\|\hbSigma(\theta)\;-\;\bSigma\bigr\|^2$ 
        \State \textbf{(Update)}: 
        $\theta^{(t+1)} \gets \theta^{(t)} \;-\;\eta^{(t)}\,\nabla_\theta L\big(\theta^{(t)} \big)$
            \Comment{Or variants of GD, such as Adam}
    \EndFor
    \State $\upisep(\bSigma) \gets \hbSigma(\theta^{(T)})$
\end{algorithmic}
\end{algorithm}

\medskip
\paragraph{Procedure}
The GD method consists of three main steps, as summarized in Algorithm~\ref{alg:gradient_descent}.
\begin{enumerate}[topsep=2pt,itemsep=2pt,leftmargin=2em]
    \item \emph{Initialization:}
    Randomly choose an initial parameter $\theta^{(0)}$.
    \item \emph{Construction (forward pass):}
    Construct $\hbSigma(\theta^{(t)})$ by converting raw parameters $\theta^{(t)}$ to valid $\{ w_i(\theta), \bv_{i,k}(\theta)$ via softmax and normalization. 
    \item \emph{Gradient-descent update (backward pass):}
    Update with a learning rate schedule $\eta{(t)}$:
    \[
        \theta^{(t+1)} \gets 
            \theta^{(t)} -\eta^{(t)}\,\nabla_\theta L(\theta^{(t)}).
    \]
\end{enumerate}

Although the objective function is non-convex in $\theta$, we often observe good performance in experiments (see \Cref{sec:exp_random_separable} and \Cref{sec:quantum_states}), particularly for moderate $K$ and $d$.

\medskip
\paragraph{GD with deep neural network parameterization} 
Instead of directly treating $w_i$ and $\bv_{i,k}$ (specifically, the raw parameters $\{ \tw_i\},\;\{\tbv_{i,j}\}$) as decision variables, we can reparameterize them using a neural network that takes $i \in [\Napprox]$ as input and outputs $\{ w_i(\theta), \bv_{i,k}(\theta) \}$. 
We call this algorithm as ``Deep Parameterized GD'' (DP-GD); see \ref{sec:DP-GD_details} for more details. 

\medskip
\paragraph{Comparison between FW and GD}
FW iterates directly in the space of separable covariances, expanding a convex combination of rank-1 atoms in $\cA$; see \eqref{eqn:setA}. 
In contrast, GD iterates in the parameter space $\theta$, which can result in different optimization landscape and require additional hyperparameters (e.g., $\Napprox$ and $\eta$). 
Either of the methods can be practically effective, each with advantages in different dimensional or structural regimes.

\subsection{Experiment 1: Random separable states}
\label{sec:exp_random_separable}

We first consider a scenario where $\bSigma$ is K-S, generated by mixing $\Ntrue$ rank-1 Kronecker products.  
Hence, the best K-S approximation is $\bSigma$ itself, and the ideal approximation error is zero.  

\medskip
\paragraph{Generative model}
Let $\cH=\bigotimes_{j=1}^K \cH_j$ where $\cH_j = \RR^d$ for all $j \in [K]$. 
\begin{enumerate}
    \item 
    Draw random weights $\{w_a\}_{a=1}^{\Ntrue}$ such that $w_i>0, ~\forall i$ and $\sum_{i=1}^{\Ntrue} w_i = 1$.  Specifically, we draw $w_i$ from $\Unif[0,1]$ i.i.d. and normalize them by their sum. 
    \item 
    Draw random unit vectors $\bv_{i,j} \in \cH_j$ with $\|\bv_a^{(k)} \| = 1, ~\forall a, k$. We draw $\bv_a^{(k)}$ from standard Gaussian distribution and normalize by its norm.
    \item 
    We generate $\bSigma\in\cSsa_+(\cH)$ by summing up the rank-1 Kronecker products:
    \begin{equation}\label{eqn:random_separable_gen}
        \bSigma
        = \sum_{a=1}^{\Ntrue} w_a   
            \left( \bigotimes_{k=1}^K \bv_a^{(k)} \right)\left( \bigotimes_{k=1}^K \bv_a^{(k)} \right)^T
        = \sum_{a=1}^{\Ntrue} w_a \bigl( \bv_a^{(1)} {\bv_a^{(1)}}^\top \bigr)\,\otimes\,\cdots \, \otimes \,\bigl(\bv_a^{(K)} {\bv_a^{(K)}}^\top \bigr).
    \end{equation}
\end{enumerate}

\medskip
\paragraph{Results and observations}
We evaluate Frank–Wolfe (FW), gradient descent (GD), and DP-GD on 100 random separable covariance instances per \eqref{eqn:random_separable_gen} in bipartite setting, across various $d$ and $\Ntrue$.  
Each method is run three times per instance, and Figure~\ref{fig:random_separable} shows boxplots of the best errors. 
FW is run for 3000 iterations with a fixed diminishing step size. 
GD is run with various $\Napprox$ using Adam for 10 epochs, 300 steps each, with initial learning rate $\eta = 1/10$ decayed by $1/\sqrt{2}$ per epoch. 
DP-GD (depth 2, width 100) is run similarly with initial learning rate $\eta = 1/40$, compensated by $4\times$ steps per epoch, to stabilize training. 
All three methods accurately recover the separable covariances to error below $10^{-2}$ in most configurations, except when $\Ntrue > \Napprox$ for GD/DP-GD.
We report on other algorithmic design choices (e.g., FW step-size rules, GD/DP-GD learning rates) in the supplement (\ref{sec:ablation_studies}).

\begin{figure}[t]
    \centering
    \includegraphics[width=\linewidth]{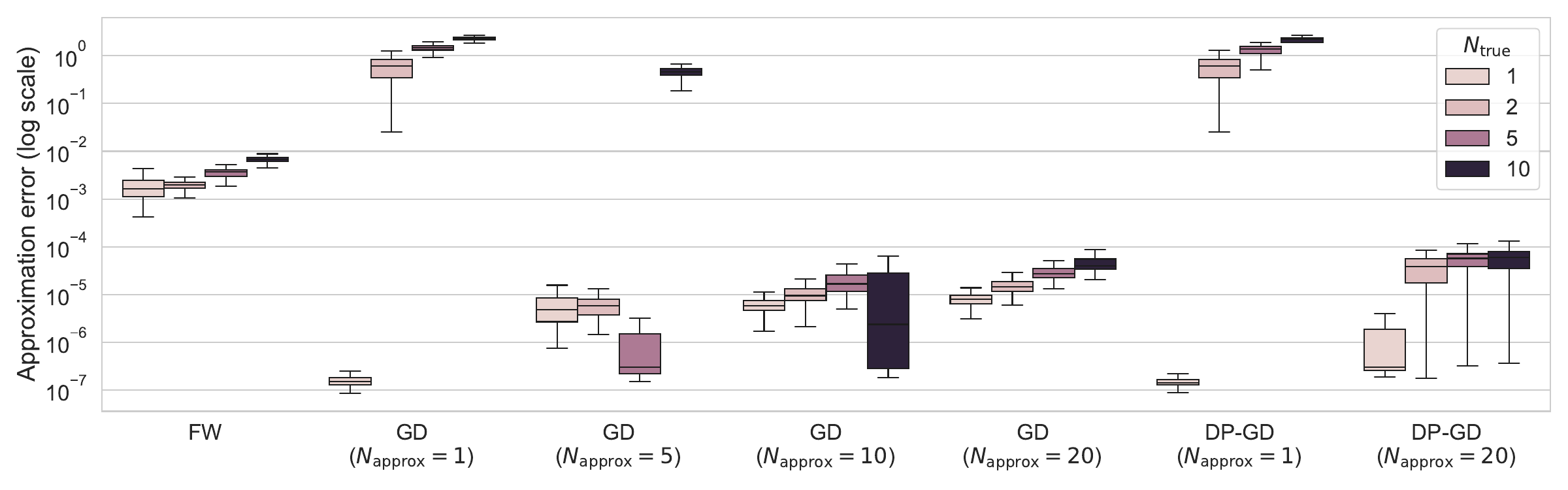}
    \caption{
        Approximation error (boxplots) for FW, GD and DP-GD on 100 random separable covariance instances. 
        FW is run for 3000 iterations with a fixed diminishing step size. 
        GD uses $\Napprox \in \{1, 5, 10, 20\}$, trained via Adam optimizer for 10 epochs of 300 steps each, with initial learning rate $\eta = 1/10$ decayed by $1/\sqrt{2}$ each epoch. 
        DP-GD (depth 2, width 100) runs at $\Napprox \in \{1, 20\}$ using Adam for 10 epochs of 1200 steps ($4\times$ more steps per epoch compared to GD) each, with initial $\eta = 1/40$ decayed by $1/\sqrt{2}$ per epoch to stabilize training. 
        All methods perform well, except when $\Ntrue > \Napprox$ in the GD/DP-GD parameterization.
    }
    \label{fig:random_separable}
\end{figure}

\subsection{Experiment 2: Quantum states with known separability thresholds}
\label{sec:quantum_states}

Next, we test our K-S approximation algorithms on two canonical families of bipartite quantum states 
whose exact separability thresholds and best separable approximations (BSA) have been established or conjectured in the quantum physics literature \cite{werner1989quantum,horodecki1999general}. 
These states serve as a natural benchmark, enabling a direct comparison between our approximations and known BSA results.

\medskip
\paragraph{Isotropic states}
Let $\cH = \cH_1 \otimes \cH_2$ with $\cH_1 = \cH_2 = \CC^d$. 
We call $d$ the local dimension. 
The \emph{isotropic state} $\bSigma^{\iso}_d(\alpha)$ for $\alpha \in [0,1]$ is defined as
\[
    \bSigma^{\iso}_d(\alpha) 
        =(1-\alpha)\,\frac{1}{d^2}\,\bI_{d^2} + \alpha \cdot  \Phi_+ \Phi_+^{\top},
        \quad\text{where}\quad 
        \Phi_+ \coloneqq \frac{1}{\sqrt{d}} \sum_{i=1}^d \be_{i}^{(1)}\otimes\be_{i}^{(2)}.
\]
Here, $\Phi_+$ is known as the maximally entangled vector in $\CC^d \otimes \CC^d$. 
It is known \cite{lewenstein1998separability,horodecki1999general} that
\begin{itemize}[itemsep=2pt,topsep=2pt,leftmargin=1.5em]
    \item
    $\bSigma^{\iso}_d(\alpha)$ is separable if and only if \(\alpha \,\le\, 1/(d+1)\).  
    \item
    For $\alpha > \frac{1}{d+1}$, the best separable approximation of $\bSigma^{\iso}_d(\alpha)$ is simply $\bSigma^{\iso}_d(\frac{1}{d+1})$. 
    Therefore, the distance between the true state and its BSA is
    \[
        \left\| \upisep\big( \bSigma^{\iso}_d(\alpha) \big) - \bSigma^{\iso}_d(\alpha) \right\|_{\mathrm{HS}}
            = \frac{\sqrt{d^2 - 1}}{d} \left( \alpha - \frac{1}{d+1} \right).
    \] 
\end{itemize}

\medskip
\paragraph{Werner states}
Again, let $\cH = \CC^d \otimes \CC^d$. 
The \emph{Werner state} $\bSigma^{\Werner}_d(\alpha)$ for $\alpha \in [0,1]$ is
\[
    \bSigma^{\Werner}_d(\alpha)
    =   (1-\alpha)\, \frac{2}{d(d+1)}\bP^+ + \alpha\, \frac{2}{d(d-1)}\bP^-,
\]
where 
\[
    \bP^+ = \tfrac{1}{2}\bigl(\bI_{d^2} + \bF\bigr),
    \quad
    \bP^- = \tfrac{1}{2}\bigl(\bI_{d^2} - \bF\bigr),
    \quad
    \bF = \sum_{i,j=1}^d  \bigl(\be_i^{(1)}\,{\be_j^{(1)}}^\top\bigr)
        \,\otimes\, \bigl(\be_j^{(2)}\,{\be_i^{(2)}}^\top\bigr).
\]
Here, $\bP^+$ and $\bP^-$ are the orthogonal projectors onto the symmetric and anti-symmetric subspaces of $\CC^d \otimes \CC^d$, respectively, where $\bI_{d^2}$ is the $d^2 \times d^2$ identity operator, and $\bF$ is called the flip (swap) operator.
From the quantum physics literature, e.g., \cite{lewenstein1998separability,horodecki1999general}, it is known that:
\begin{itemize}[itemsep=2pt,topsep=2pt,leftmargin=1.5em]
    \item
      $\bSigma^{\Werner}_d(\alpha)$ is separable if and only if $\alpha \,\le\, \tfrac{1}{2}$.  
    \item
    For $\alpha > \tfrac{1}{2}$, the BSA is $\bSigma^{\Werner}_d(\frac{1}{2})$, and thus,
    \[
        \left\| \upisep\bigl(\bSigma^{\Werner}_d(\alpha)\bigr) - \bSigma^{\Werner}_d(\alpha) \right\|_{\mathrm{HS}}
        = \frac{2}{\sqrt{d^2}-1} \left( \alpha - \frac{1}{2} \right).
    \]
    This is formally established for $d=2, 3$ \cite{lewenstein1998separability}, and is believed to hold for $d > 3$, but to our knowledge, there is no published reference that explicitly states these results. 
\end{itemize}



\begin{figure}[t]
    \centering
    \includegraphics[width=\linewidth]{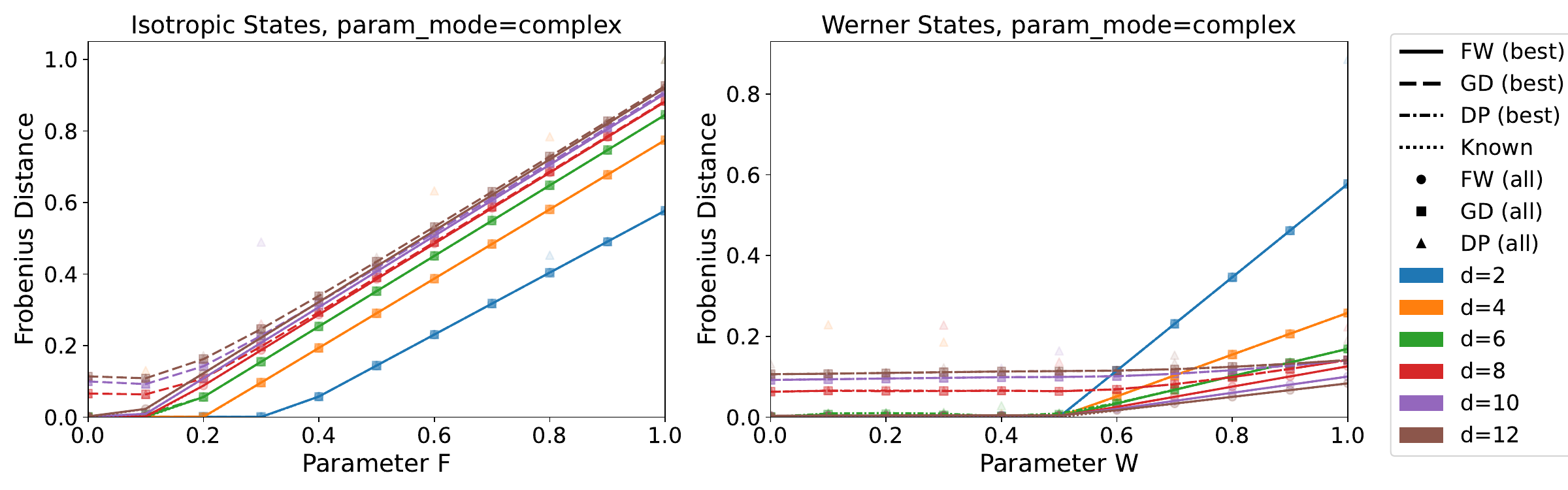}
    \caption{
        Approximation of isotropic and Werner states at various local dimensions $d$. 
        FW, GD, and DP-GD all recover the correct best K-S approximation for moderate $d$ ($\leq 6$), but GD and DP-GD sometimes struggle at larger $d$, possibly due to the limited model size $\Napprox=50$.
    }
    \label{fig:quantum_separability}
\end{figure}

\medskip
\paragraph{Results and observations}
We run FW, GD, and DP-GD on both $\bSigma^{\iso}_d(\alpha)$ and $\bSigma^{\Werner}_d(\alpha)$ for $\alpha \in [0,1]$ at local dimensions $d=2,4, 6, 8, 10, 12$. 
FW is run with a diminishing step-size rule for up to 1500 iterations, and GD/DP-GD are run with $\Napprox=50$ over 10 epochs (150 steps each) starting at the learning rate $\eta = 0.1$ and decaying by a factor of $1/\sqrt{2}$ per epoch. 
All three methods---with \emph{complex parameterization} of $\bv_i^{(1)}, \bv_i^{(2)}$---accurately recover the known separability thresholds ($\alpha = 1/(d+1)$ or $\alpha=1/2$) and the associated distances from the true state to its BSA for $d \leq 6$. 
For higher dimensions ($d=8,10, 12$), GD/DP-GD may fail to match the exact thresholds, possibly due to the limited approximation budget $\Napprox$, whereas FW continues to converge reliably (Figure~\ref{fig:quantum_separability}). 
This suggests that a larger $\Napprox$ may be necessary in these larger-dimensional settings. 
Additional results with increased budgets $\Napprox$ and further discussions are provided in the supplement (\ref{sec:additional_quantum_exp}).

We also observe that real parameterizations---i.e., enforcing $\{\bv_i^{(1)}, \bv_i^{(2)}\}$ to be real-valued---often yield worse approximations than complex ones, as they cannot capture crucial phase relationships. 
This difference is especially noticeable for isotropic and Werner states (see \ref{sec:real_vs_complex}) and reflects the fact that $\cBsep(\RR^d\otimes\RR^d)$ is a strict subset of $\cBsep(\CC^d\otimes\CC^d) \cap \cBsa(\RR^d\otimes\RR^d)$. 
In practice, the choice between real and complex K-S models may depend on the domain: complex representations seem essential in quantum-mechanical settings, while in data-science contexts, one may trade slight performance loss for more interpretable real-valued outputs. 
We explore these phenomena, along with additional plots, in the supplement (\ref{sec:real_vs_complex}).

\subsection{Connection to prior work}
Our FW framework generalizes the ``DA algorithm'' of Dahl \emph{et al.}~\cite{dahl2007tensor}, while our GD/DP-GD approach is closely related to the ``neural network method'' of Girardin \emph{et al.}~\cite{girardin2022building}. 
Although both works report promising performance, they omit discussion of convergence guarantees or real vs.\ complex parameterization. 
In contrast, we provide a broader, more systematic treatment of FW and GD algorithms---along with extensive ablation studies and experiments---to assess their performance and limitations.
For instance, while \cite{girardin2022building} attributes large gains to deep network parameterization over ``naive'' GD, our results suggest that properly tuned shallow GD can also perform competitively. 
\section{Conclusion}\label{sec:conclusion}
In this paper, we investigated the representation and approximation of multiway covariances by Kronecker-separable (K-S) models, drawing on insights from quantum information, optimization, and complexity theory. 
K-S models represent the covariance or inverse covariance matrices as sums of Kronecker products of positive semidefinite (PSD) matrices, forming the basis for approaches such as the matrix normal model, the Kronecker PCA (K-PCA), and the Teralasso. 
We showed that a large fraction of PSD matrices is not K-S, and that computing the best K-S approximation is NP-hard. 
In particular, our results imply that, under the worst-case analysis, it is impossible to extend K-PCA to impose that all its factors be PSD without sacrificing the uniform approximation property of the original unconstrained K-PCA. 
Notwithstanding these negative results, our numerical experiments indicated that iterative methods such as Frank–Wolfe and gradient-descent can converge to high-quality K-S approximations, suggesting that worst-case hardness may be overly conservative in practice.

There are several promising directions for extension of our analysis. 
It would be interesting to identify structured classes of covariances for which the linear subproblem (LMS \eqref{eqn:LMS} in Frank–Wolfe) is tractable, clarifying conditions under which efficient and accurate K-S approximations are possible. 
Another direction is to explore conditions or regularizations---such as adding a small multiple of the identity---to promote separability in otherwise non-separable covariances. 
Finally, we hypothesize that there may exist a universal data model that naturally leads to K-S covariances, akin to how certain structured matrices tend toward sparsity or low rank \cite{udell2019big}. 

\section*{Acknowledgments}
This work was partially supported by ARO grant number W911NF-23-1-0343, and NSF grant number CCF-2246213.

\bibliographystyle{alpha}
\bibliography{sep_cov}

@article{kolda2009tensor,
  title={Tensor decompositions and applications},
  author={Kolda, Tamara G and Bader, Brett W},
  journal={SIAM Review},
  volume={51},
  number={3},
  pages={455--500},
  year={2009},
  publisher={SIAM}
}

@article{hillar2013most,
  title={Most tensor problems are {NP}-hard},
  author={Hillar, Christopher J and Lim, Lek-Heng},
  journal={Journal of the ACM (JACM)},
  volume={60},
  number={6},
  pages={1--39},
  year={2013},
  publisher={ACM New York, NY, USA}
}

@book{aubrun2017alice,
  title={Alice and Bob Meet Banach},
  author={Aubrun, Guillaume and Szarek, Stanis{\l}aw J},
  volume={223},
  year={2017},
  publisher={American Mathematical Society}
}

@inproceedings{gurvits2003classical,
  title={Classical deterministic complexity of {E}dmonds' problem and quantum entanglement},
  author={Gurvits, Leonid},
  booktitle={Proceedings of the thirty-fifth Annual ACM Symposium on Theory of Computing},
  pages={10--19},
  year={2003}
}

@article{ioannou2007computational,
  title={Computational complexity of the quantum separability problem},
  author={Ioannou, Lawrence M},
  journal={Quantum Information \& Computation},
  volume={7},
  number={4},
  pages={335--370},
  year={2007},
  publisher={Rinton Press, Incorporated Paramus, NJ}
}

@article{gharibian2008strong,
  title={Strong {NP}-hardness of the quantum separability problem},
  author={Gharibian, Sevag},
  journal={arXiv preprint arXiv:0810.4507},
  year={2008}
}

@inproceedings{jaggi2013revisiting,
  title={Revisiting {F}rank-{W}olfe: Projection-free sparse convex optimization},
  author={Jaggi, Martin},
  booktitle={International Conference on Machine Learning},
  pages={427--435},
  year={2013},
  organization={PMLR}
}

@article{lacoste2015global,
  title={On the global linear convergence of {F}rank-{W}olfe optimization variants},
  author={Lacoste-Julien, Simon and Jaggi, Martin},
  journal={Advances in Neural Information Processing Systems},
  volume={28},
  year={2015}
}

@article{frank1956algorithm,
  title={An algorithm for quadratic programming},
  author={Frank, Marguerite and Wolfe, Philip},
  journal={Naval Research Logistics Quarterly},
  volume={3},
  number={1-2},
  pages={95--110},
  year={1956},
  publisher={Wiley Online Library}
}

@article{levitin1966constrained,
  title={Constrained minimization methods},
  author={Levitin, Evgeny S and Polyak, Boris T},
  journal={USSR Computational Mathematics and Mathematical Physics},
  volume={6},
  number={5},
  pages={1--50},
  year={1966},
  publisher={No longer published by Elsevier}
}

@article{holloway1974extension,
  title={An extension of the {F}rank and {W}olfe method of feasible directions},
  author={Holloway, Charles A},
  journal={Mathematical Programming},
  volume={6},
  number={1},
  pages={14--27},
  year={1974},
  publisher={Springer}
}

@article{von1977simplicial,
  title={Simplicial decomposition in nonlinear programming algorithms},
  author={Von Hohenbalken, Balder},
  journal={Mathematical Programming},
  volume={13},
  number={1},
  pages={49--68},
  year={1977},
  publisher={Springer}
}

@incollection{hearn1987restricted,
  title={Restricted simplicial decomposition: Computation and extensions},
  author={Hearn, Donald W and Lawphongpanich, S and Ventura, Jose A},
  booktitle={Computation Mathematical Programming},
  pages={99--118},
  year={1987},
  publisher={Springer}
}

@article{wolfe1976finding,
  title={Finding the nearest point in a polytope},
  author={Wolfe, Philip},
  journal={Mathematical Programming},
  volume={11},
  number={1},
  pages={128--149},
  year={1976},
  publisher={Springer}
}

@article{dunn1978conditional,
  title={Conditional gradient algorithms with open loop step size rules},
  author={Dunn, Joseph C and Harshbarger, S},
  journal={Journal of Mathematical Analysis and Applications},
  volume={62},
  number={2},
  pages={432--444},
  year={1978},
  publisher={Academic Press}
}

@article{peres1996separability,
  title={Separability criterion for density matrices},
  author={Peres, Asher},
  journal={Physical Review Letters},
  volume = {77},
  issue = {8},
  pages = {1413--1415},
  numpages = {0},
  year = {1996},
  month = {Aug},
  publisher={American Physical Society}
}

@article{horodecki2001separability,
  title={Separability of n-particle mixed states: Necessary and sufficient conditions in terms of linear maps},
  author={Horodecki, Micha{\l} and Horodecki, Pawe{\l} and Horodecki, Ryszard},
  journal={Physics Letters A},
  volume={283},
  number={1-2},
  pages={1--7},
  year={2001},
  publisher={Elsevier}
}

@article{gurvits2002largest,
  title={Largest separable balls around the maximally mixed bipartite quantum state},
  author={Gurvits, Leonid and Barnum, Howard},
  journal={Physical Review A},
  volume = {66},
  issue = {6},
  pages = {2311--2317},
  numpages = {7},
  year = {2002},
  month = {Dec},
  publisher={American Physical Society}
}

@article{dahl2007tensor,
  title={A tensor product matrix approximation problem in quantum physics},
  author={Dahl, Geir and Leinaas, Jon Magne and Myrheim, Jan and Ovrum, Eirik},
  journal={Linear Algebra and its Applications},
  volume={420},
  number={2-3},
  pages={711--725},
  year={2007},
  publisher={Elsevier}
}

@article{dawid1981some,
  title={Some matrix-variate distribution theory: Notational considerations and a {B}ayesian application},
  author={Dawid, A Philip},
  journal={Biometrika},
  volume={68},
  number={1},
  pages={265--274},
  year={1981},
  publisher={Oxford University Press}
}

@article{tsiligkaridis2013covariance,
  title={Covariance estimation in high dimensions via {K}ronecker product expansions},
  author={Tsiligkaridis, Theodoros and Hero, Alfred O},
  journal={IEEE Transactions on Signal Processing},
  volume={61},
  number={21},
  pages={5347--5360},
  year={2013},
  publisher={IEEE}
}

@article{greenewald2015robust,
  title={Robust {K}ronecker product {PCA} for spatio-temporal covariance estimation},
  author={Greenewald, Kristjan and Hero, Alfred O},
  journal={IEEE Transactions on Signal Processing},
  volume={63},
  number={23},
  pages={6368--6378},
  year={2015},
  publisher={IEEE}
}

@article{hoff2011separable,
  title={Separable covariance arrays via the {T}ucker product, with applications to multivariate relational data},
  author={Hoff, Peter D},
  journal={Bayesian Analysis},
  volume={6},
  number={2},
  pages={179--196},
  year={2011},
  publisher={International Society for Bayesian Analysis}
}

@article{werner2008estimation,
  title={On estimation of covariance matrices with {K}ronecker product structure},
  author={Werner, Karl and Jansson, Magnus and Stoica, Petre},
  journal={IEEE Transactions on Signal Processing},
  volume={56},
  number={2},
  pages={478--491},
  year={2008},
  publisher={IEEE}
}

@article{mardia1993spatial,
  title={Spatial-temporal analysis of multivariate environmental monitoring data},
  author={Mardia, Kanti V and Goodall, Colin R},
  journal={Multivariate environmental statistics},
  volume={6},
  number={76},
  pages={347--385},
  year={1993},
  publisher={Elsevier Amsterdam}
}

@article{dutilleul1999mle,
  title={The {MLE} algorithm for the matrix normal distribution},
  author={Dutilleul, Pierre},
  journal={Journal of Statistical Computation and Simulation},
  volume={64},
  number={2},
  pages={105--123},
  year={1999},
  publisher={Taylor \& Francis}
}

@article{strobach1995low,
  title={Low-rank detection of multichannel {G}aussian signals using block matrix approximation},
  author={Strobach, Peter},
  journal={IEEE Transactions on Signal Processing},
  volume={43},
  number={1},
  pages={233--242},
  year={1995},
  publisher={IEEE}
}

@incollection{van1993approximation,
  title={Approximation with Kronecker products},
  author={Van Loan, Charles F and Pitsianis, Nikos},
  booktitle={Linear algebra for large scale and real-time applications},
  pages={293--314},
  year={1993},
  publisher={Springer}
}

@article{galecki1994general,
  title={General class of covariance structures for two or more repeated factors in longitudinal data analysis},
  author={Galecki, Andrzej T},
  journal={Communications in Statistics-Theory and Methods},
  volume={23},
  number={11},
  pages={3105--3119},
  year={1994},
  publisher={Taylor \& Francis}
}

@article{ohlson2013multilinear,
  title={The multilinear normal distribution: Introduction and some basic properties},
  author={Ohlson, Martin and Ahmad, M Rauf and Von Rosen, Dietrich},
  journal={Journal of Multivariate Analysis},
  volume={113},
  pages={37--47},
  year={2013},
  publisher={Elsevier}
}

@article{friedman2008sparse,
  title={Sparse inverse covariance estimation with the graphical lasso},
  author={Friedman, Jerome and Hastie, Trevor and Tibshirani, Robert},
  journal={Biostatistics},
  volume={9},
  number={3},
  pages={432--441},
  year={2008},
  publisher={Oxford University Press}
}

@inproceedings{kalaitzis2013bigraphical,
  title={The bigraphical lasso},
  author={Kalaitzis, Alfredo and Lafferty, John and Lawrence, Neil D and Zhou, Shuheng},
  booktitle={International Conference on Machine Learning},
  pages={1229--1237},
  year={2013},
  organization={PMLR}
}

@article{greenewald2019tensor,
  title={Tensor graphical lasso ({T}era{L}asso)},
  author={Greenewald, Kristjan and Zhou, Shuheng and Hero III, Alfred},
  journal={Journal of the Royal Statistical Society: Series B (Statistical Methodology)},
  volume={81},
  number={5},
  pages={901--931},
  year={2019},
  publisher={Wiley Online Library}
}

@article{grasedyck2004existence,
  title={Existence and computation of low {K}ronecker-rank approximations for large linear systems of tensor product structure},
  author={Grasedyck, Lars},
  journal={Computing},
  volume={72},
  number={3},
  pages={247--265},
  year={2004},
  publisher={Springer}
}

@article{kressner2010krylov,
  title={Krylov subspace methods for linear systems with tensor product structure},
  author={Kressner, Daniel and Tobler, Christine},
  journal={SIAM Journal on Matrix Analysis and Applications},
  volume={31},
  number={4},
  pages={1688--1714},
  year={2010},
  publisher={SIAM}
}

@inproceedings{wang2020sylvester,
  title={The {S}ylvester graphical lasso ({S}y{G}lasso)},
  author={Wang, Yu and Jang, Byoungwook and Hero, Alfred},
  booktitle={International Conference on Artificial Intelligence and Statistics},
  pages={1943--1953},
  year={2020},
  organization={PMLR}
}

@article{jolliffe1982note,
  title={A note on the use of principal components in regression},
  author={Jolliffe, Ian T},
  journal={Journal of the Royal Statistical Society: Series C (Applied Statistics)},
  volume={31},
  number={3},
  pages={300--303},
  year={1982},
  publisher={Wiley Online Library}
}

@article{meinshausen2006high,
  title={High-dimensional graphs and variable selection with the lasso},
  author={Meinshausen, Nicolai and B{\"u}hlmann, Peter},
  journal={The Annals of Statistics},
  volume={34},
  number={3},
  pages={1436--1462},
  year={2006},
  publisher={Institute of Mathematical Statistics}
}

@article{udell2019big,
  title={Why are big data matrices approximately low rank?},
  author={Udell, Madeleine and Townsend, Alex},
  journal={SIAM Journal on Mathematics of Data Science},
  volume={1},
  number={1},
  pages={144--160},
  year={2019},
  publisher={SIAM}
}

@book{cressie2015statistics,
  title={Statistics for Spatial Data},
  author={Cressie, Noel},
  year={2015},
  publisher={John Wiley \& Sons}
}

@article{allen2010transposable,
  title={Transposable regularized covariance models with an application to missing data imputation},
  author = {Genevera I. Allen and Robert Tibshirani},
  volume = {4},
  number = {2},
  journal = {The Annals of Applied Statistics},
  year={2010},
  publisher = {Institute of Mathematical Statistics},
  pages = {764 -- 790},
}

@article{manceur2013maximum,
  title={Maximum likelihood estimation for the tensor normal distribution: Algorithm, minimum sample size, and empirical bias and dispersion},
  author={Manceur, Ameur M and Dutilleul, Pierre},
  journal={Journal of Computational and Applied Mathematics},
  volume={239},
  pages={37--49},
  year={2013},
  publisher={Elsevier}
}

@article{horodecki2009quantum,
  title={Quantum entanglement},
  author={Horodecki, Ryszard and Horodecki, Pawe{\l} and Horodecki, Micha{\l} and Horodecki, Karol},
  journal={Reviews of Modern Physics},
  olume = {81},
  issue = {2},
  pages = {865--942},
  numpages = {0},
  year = {2009},
  month = {Jun},
  publisher={American Physical Society}
}

@article{guhne2009entanglement,
  title={Entanglement detection},
  author={G{\"u}hne, Otfried and T{\'o}th, G{\'e}za},
  journal={Physics Reports},
  volume={474},
  number={1-6},
  pages={1--75},
  year={2009},
  publisher={Elsevier}
}

@article{lewenstein1998separability,
  title={Separability and entanglement of composite quantum systems},
  author={Lewenstein, Maciej and Sanpera, Anna},
  journal={Physical review letters},
  volume={80},
  number={11},
  pages={2261},
  year={1998},
  publisher={APS}
}

@article{zyczkowski1998volume,
  title={Volume of the set of separable states},
  author={{\.Z}yczkowski, Karol and Horodecki, Pawe{\l} and Sanpera, Anna and Lewenstein, Maciej},
  journal={Physical Review A},
  volume = {58},
  issue = {2},
  pages = {883--892},
  numpages = {0},
  year = {1998},
  month = {Aug},
  publisher={American Physical Society}
}

@article{vidal1999robustness,
  title={Robustness of entanglement},
  author={Vidal, Guifr{\'e} and Tarrach, Rolf},
  journal={Physical Review A},
  volume = {59},
  issue = {1},
  pages = {141--155},
  numpages = {0},
  year = {1999},
  month = {Jan},
  publisher={American Physical Society}
}

@article{horodecki2000operational,
  title={Operational criterion and constructive checks for the separability of low-rank density matrices},
  author={Horodecki, Pawe{\l} and Lewenstein, Maciej and Vidal, Guifr{\'e} and Cirac, Ignacio},
  journal={Physical Review A},
  olume = {62},
  issue = {3},
  pages = {2310--2319},
  numpages = {10},
  year = {2000},
  month = {Aug},
  publisher={American Physical Society}
}

@article{caves2002unknown,
  title={Unknown quantum states: The quantum de {F}inetti representation},
  author={Caves, Carlton M and Fuchs, Christopher A and Schack, R{\"u}diger},
  journal={Journal of Mathematical Physics},
  volume={43},
  number={9},
  pages={4537--4559},
  year={2002},
  publisher={American Institute of Physics}
}

@article{doherty2002distinguishing,
  title={Distinguishing separable and entangled states},
  author={Doherty, Andrew C and Parrilo, Pablo A and Spedalieri, Federico M},
  journal={Physical Review Letters},
  volume = {88},
  issue = {18},
  pages = {7904--7907},
  numpages = {4},
  year = {2002},
  month = {Apr},
  publisher={American Physical Society}
}

@article{doherty2004complete,
  title={Complete family of separability criteria},
  author={Doherty, Andrew C and Parrilo, Pablo A and Spedalieri, Federico M},
  journal={Physical Review A},
  volume = {69},
  issue = {2},
  pages = {2308--2327},
  numpages = {20},
  year={2004},
  publisher={American Physical Society}
}

@incollection{karp1972reducibility,
  title={Reducibility among combinatorial problems},
  author={Karp, R},
  booktitle={Complexity of Computer Computation},
  editor={Miller, R. E. and Thatcher, J. W.},
  pages={85--104},
  year={1972},
  publisher={Plenum Press}
}

@phdthesis{liu2007complexity,
  title        ={The complexity of the consistency and N-representability problems for quantum states},
  author       ={Liu, Yi-Kai},
  year         ={2007},
  note         = {Available at \url{https://arxiv.org/abs/0712.3041}},
  school       = {University of California, San Diego},
  type         = {{P}h{D} thesis}
}

@article{chevet1978series,
  title={S{\'e}ries de variables al{\'e}atoires {G}aussiennes {\`a} valeurs dans $E \hat{\otimes}_{\varepsilon} F$. Application aux produits d’espaces de {W}iener abstraits},
  author={Chevet, S},
  journal={S{\'e}minaire sur la G\'eom\'etrie des Espaces de Banach (1977-1978)},
  volume={Talk no. 19},
  pages={1--15},
  year={1978},
  publisher={\'Ecole Polytechnique}
}

@article{gordon1985some,
  title={Some inequalities for {G}aussian processes and applications},
  author={Gordon, Yehoram},
  journal={Israel Journal of Mathematics},
  volume={50},
  pages={265--289},
  year={1985},
  publisher={Springer}
}

@article{chu1962modified,
  title={Mathematical Notes: A modified {W}allis product and some applications},
  author={Chu, J. T.},
  journal={The American Mathematical Monthly},
  volume={69},
  number={5},
  pages={402--404},
  year={1962},
  publisher={JSTOR}
}

@book{hall2013quantum,
  title={Quantum Theory for Mathematicians},
  author={Hall, Brian C},
  year={2013},
  publisher={Springer}
}

@article{hitchcock1927expression,
  title={The expression of a tensor or a polyadic as a sum of products},
  author={Hitchcock, Frank L},
  journal={Journal of Mathematics and Physics},
  volume={6},
  number={1-4},
  pages={164--189},
  year={1927},
  publisher={Wiley Online Library}
}

@article{kruskal1977three,
  title={Three-way arrays: rank and uniqueness of trilinear decompositions, with application to arithmetic complexity and statistics},
  author={Kruskal, Joseph B},
  journal={Linear Algebra and its Applications},
  volume={18},
  number={2},
  pages={95--138},
  year={1977},
  publisher={Elsevier}
}

@article{urysohn1926memoire,
  title={M{\'e}moire sur les multiplicit{\'e}s {C}antoriennes (suite)},
  author={Urysohn, Paul},
  journal={Fundamenta Mathematicae},
  volume={8},
  number={1},
  pages={225--351},
  year={1926},
  publisher={Polska Akademia Nauk. Instytut Matematyczny PAN}
}

@article{aubrun2014entanglement,
  title={Entanglement Thresholds for Random Induced States},
  author={Aubrun, Guillaume and Szarek, Stanis{\l}aw J and Ye, Deping},
  journal={Communications on Pure and Applied Mathematics},
  volume={67},
  pages={0129--0171},
  year={2014}
}

@article{bell1964einstein,
  title={On the {E}instein {P}odolsky {R}osen paradox},
  author={Bell, John S},
  journal={Physics Physique Fizika},
  volume={1},
  number={3},
  pages={195--200},
  year={1964},
  publisher={American Physical Society}
}

@book{vershynin2018high,
  title={High-dimensional probability: An introduction with applications in data science},
  author={Vershynin, Roman},
  volume={47},
  year={2018},
  publisher={Cambridge University Press}
}

@book{bishop2006pattern,
    author = {Bishop, Christopher M.},
    title = {Pattern Recognition and Machine Learning (Information Science and Statistics)},
    year = {2006},
    isbn = {0387310738},
    publisher = {Springer-Verlag},
    address = {Berlin, Heidelberg}
}

@book{gelman1995bayesian,
  title={Bayesian data analysis},
  author={Gelman, Andrew and Carlin, John B and Stern, Hal S and Rubin, Donald B},
  year={1995},
  publisher={Chapman and Hall/CRC}
}

@book{hastie2009elements,
  title={The elements of statistical learning: Data mining, inference, and prediction},
  author={Hastie, Trevor and Tibshirani, Robert and Friedman, Jerome H and Friedman, Jerome H},
  volume={2},
  year={2009},
  publisher={Springer}
}

@book{koller2009probabilistic,
  title={Probabilistic graphical models: Principles and techniques},
  author={Koller, Daphne and Friedman, Nir},
  year={2009},
  publisher={MIT press}
}

@article{werner1989quantum,
  title={Quantum states with {E}instein-{P}odolsky-{R}osen correlations admitting a hidden-variable model},
  author={Werner, Reinhard F},
  journal={Physical Review A},
  volume={40},
  number={8},
  pages={4277},
  year={1989},
  publisher={APS}
}

@article{horodecki1999general,
  title={General teleportation channel, singlet fraction, and quasidistillation},
  author={Horodecki, Micha{\l} and Horodecki, Pawe{\l} and Horodecki, Ryszard},
  journal={Physical Review A},
  volume={60},
  number={3},
  pages={1888},
  year={1999},
  publisher={APS}
}

@article{renou2021quantum,
  title={Quantum theory based on real numbers can be experimentally falsified},
  author={Renou, Marc-Olivier and Trillo, David and Weilenmann, Mirjam and Le, Thinh P and Tavakoli, Armin and Gisin, Nicolas and Ac{\'\i}n, Antonio and Navascu{\'e}s, Miguel},
  journal={Nature},
  volume={600},
  number={7890},
  pages={625--629},
  year={2021},
  publisher={Nature Publishing Group UK London}
}

@article{girardin2022building,
  title={Building separable approximations for quantum states via neural networks},
  author={Girardin, Antoine and Brunner, Nicolas and Kriv{\'a}chy, Tam{\'a}s},
  journal={Physical Review Research},
  volume={4},
  number={2},
  pages={023238},
  year={2022},
  publisher={APS}
}

@article{harshman1970foundations,
  title={Foundations of the PARAFAC procedure: Models and conditions for an “explanatory” multi-modal factor analysis},
  author={Harshman, Richard A and others},
  journal={UCLA working papers in phonetics},
  volume={16},
  number={1},
  pages={84},
  year={1970},
  publisher={Los Angeles, CA}
}

@article{carroll1970analysis,
  title={Analysis of individual differences in multidimensional scaling via an N-way generalization of “Eckart-Young” decomposition},
  author={Carroll, J Douglas and Chang, Jih-Jie},
  journal={Psychometrika},
  volume={35},
  number={3},
  pages={283--319},
  year={1970},
  publisher={Springer-Verlag}
}

@article{tucker1966some,
  title={Some mathematical notes on three-mode factor analysis},
  author={Tucker, Ledyard R},
  journal={Psychometrika},
  volume={31},
  number={3},
  pages={279--311},
  year={1966},
  publisher={Springer}
}

@article{hitchcock1928multiple,
  title={Multiple invariants and generalized rank of a p-way matrix or tensor},
  author={Hitchcock, Frank L},
  journal={Journal of Mathematics and Physics},
  volume={7},
  number={1-4},
  pages={39--79},
  year={1928},
  publisher={Wiley Online Library}
}

@article{oseledets2011tensor,
  title={Tensor-train decomposition},
  author={Oseledets, Ivan V},
  journal={SIAM Journal on Scientific Computing},
  volume={33},
  number={5},
  pages={2295--2317},
  year={2011},
  publisher={SIAM}
}

@inproceedings{cook1971complexity,
  title={The complexity of theorem-proving procedures},
  author={Cook, Stephen A},
  booktitle={Proceedings of the third annual ACM symposium on Theory of computing},
  pages={151--158},
  year={1971}
}

@misc{garey1979computers,
  title={Computers and Intractability: A guide to the theory of NP-completeness},
  author={Garey Michael, R and Johnson David, S},
  year={1979},
  publisher={WH Freeman Co., San Francisco, USA}
}

@article{levin1986average,
  title={Average case complete problems},
  author={Levin, Leonid A},
  journal={SIAM Journal on Computing},
  volume={15},
  number={1},
  pages={285--286},
  year={1986},
  publisher={SIAM}
}

@article{spielman2004smoothed,
  title={Smoothed analysis of algorithms: Why the simplex algorithm usually takes polynomial time},
  author={Spielman, Daniel A and Teng, Shang-Hua},
  journal={Journal of the ACM (JACM)},
  volume={51},
  number={3},
  pages={385--463},
  year={2004},
  publisher={ACM New York, NY, USA}
}

@inproceedings{kingma2015adam,
  author       = {Diederik P. Kingma and Jimmy Ba},
  title        = {Adam: A Method for Stochastic Optimization},
  booktitle    = {Proc.\ 3rd International Conference on Learning Representations (ICLR 2015)},
  address      = {San Diego, CA, USA},
  year         = {2015},
  note         = {Conference Track (Poster)},
  editor       = {Yoshua Bengio and Yann LeCun},
  url          = {https://dblp.org/rec/conf/iclr/KingmaB15},
  eprint       = {1412.6980},
  archivePrefix= {arXiv}
}

\newpage
\appendix
\section{Additional discussion on Kronecker-separability}
\subsection{Deferred proof of Proposition \ref{prop:separable_equivalence}}
Here we provide a proof of Proposition \ref{prop:separable_equivalence} in Section \ref{sec:separable_cov}, which establishes the equivalence between the two notions of separability in Definition \ref{defn:separable} and Definition \ref{defn:separable.2}.
\begin{proof}[Proof of Proposition \ref{prop:separable_equivalence}]
    Suppose that $\bSigma \in \cBsa_+(\cH)$ is Kronecker-separable. 
    By definition (Definition \ref{defn:separable}), $\bSigma$ admits a Kronecker decomposition of the form \eqref{eqn:separable}. 
    By considering the spectral decompositions of the factor covariances, we can express $\bSigma$ as follows:
    \begin{align*}
        \bSigma 
            &= \sum_{a=1}^r \bSigma^{(1)}_a \otimes \cdots \otimes \bSigma^{(K)}_a\\
            &= \sum_{a=1}^r \left( \sum_{i_1=1}^{d_1} \bv^{(1)}_{a,i_1} \otimes {\bv^{(1)}_{a,i_1}}^* \right) \otimes \cdots \otimes \left( \sum_{i_K=1}^{d_K} \bv^{(K)}_{a,i_1} \otimes {\bv^{(K)}_{a,i_1}}^* \right)\\
            &= \sum_{a=1}^r \left[ \sum_{i=1}^{d_1} \cdots \sum_{i=k}^{d_k} \left( \bigotimes_{k=1}^K \bv^{(k)}_{a, i_k} \right) \otimes \left( \bigotimes_{k=1}^K \bv^{(k)}_{a, i_k} \right)^* \right],
    \end{align*}
    where $d_k = \dim \cH_k$ and $\bv^{(k)}_{a,i_k} \in \cH_k$ for all $k \in [K]$, $a \in [r]$, and $i_k \in [d_k]$.
    Therefore, $\bSigma$ is separable. 
    
    Conversely, suppose that $\bSigma \in \cBsa_+(\cH)$ is separable, as described in Definition \ref{defn:separable.2}. 
    In this case, there exists a sequence of vector tuples $\big( ( \bv^{(1)}_a, \dots, \bv^{(K)}_a) \in \cH_1 \times \dots \times \cH_K : a \in [N] \big)$ for some $N \in \NN$ such that 
    \begin{align*}
        \bSigma
            &= \sum_{a=1}^N \big( \bv^{(1)}_a \otimes \dots \otimes \bv^{(K)}_a \big) \otimes \big( \bv^{(1)}_a \otimes \dots \otimes \bv^{(K)}_a \big)^* \\
            &= \sum_{a=1}^N  \big( \bv^{(1)}_a \otimes {\bv^{(1)}_a}^* \big) \otimes \dots \otimes \big( \bv^{(K)}_a \otimes {\bv^{(K)}_a}^* \big).
    \end{align*}
    For instance, we may let $N = \rank_s(\bSigma)$. 
    This indicates $\bSigma$ is Kronecker-separable with $\ranks(\bSigma) \leq \rank_s(\bSigma)$ because $\bv^{(k)}_a \otimes {\bv^{(k)}_a}^* \in \cBsa_+(\cH_k)$ for all $(k,a) \in [K] \times [N]$. 
    
    Since $\cBsa(\cH)$ is a $\dimsa(\cH)$-dimensional real vector space, there exists a sequence of tuples with $N \leq \dimsa(\cH)$ due to the Carath\'eodory's theorem. Thus, $\rank_s(\bSigma) \leq \dimsa(\cH)$.
\end{proof}

\subsection{More on Counterexample \ref{example:entangled}}\label{sec:addtl_bell_state}
In the setting of Counterexample \ref{example:entangled}, the partial transpose operation, with respect to the second factor, is the linear endomorphism $\Phi$ on the vector space of $4 \times 4$ matrices, $\cB(\cH_1 \otimes \cH_2) \cong \cB(\cH_1) \otimes \cB(\cH_2)$ such that $\Phi: \bA_1 \otimes \bA_2 \mapsto \bA_1 \otimes \bA_2^{\top}$. 
Recall that $\cB(\cH)$ denotes the space of all bounded linear operators on $\cH$, and thus, the factor operators $\bA_1, \bA_2$ are neither necessarily PSD nor even self-adjoint. 
By linearity, $\Phi$ extends to the entire space $\cB(\cH_1 \otimes \cH_2)$ as follows:
\begin{equation}
    \bA = \sum_{a=1}^{N} \bA_a^{(1)} \otimes \bA_a^{(2)}
    \quad\implies\quad
    \Phi(\bA) = \sum_{a=1}^{N} \Phi( \bA_a^{(1)} \otimes \bA_a^{(2)} )
         = \sum_{a=1}^{N} \bA_a^{(1)} \otimes {\bA_a^{(2)}}^{\top},
\end{equation}
where the sum at the far right is the partial transpose of $\bA$.

Specializing to the Bell state covariance $\bSigma_{\bell}$, we recall its definition:
\begin{align*}
    \bSigma_{\bell}
        &= \big( \be^{(1)}_1 \otimes \be^{(2)}_1 + \be^{(1)}_2 \otimes \be^{(2)}_2 \big) \otimes \big( \be^{(1)}_1 \otimes \be^{(2)}_1 + \be^{(1)}_2 \otimes \be^{(2)}_2 \big)^*\\
        &= \sum_{i,j=1}^2 \big( \be^{(1)}_i \otimes{ \be^{(1)}_j }^* \big) \otimes \big( \be^{(2)}_i \otimes{ \be^{(2)}_j }^* \big)
            \\
        &= 
        \begin{bmatrix}1&0\\0&0\end{bmatrix} \otimes \begin{bmatrix}1&0\\0&0\end{bmatrix} 
        +
        \begin{bmatrix}0&1\\0&0\end{bmatrix}\otimes \begin{bmatrix}0&1\\0&0\end{bmatrix} 
        +
        \begin{bmatrix}0&0\\1&0\end{bmatrix}\otimes \begin{bmatrix}0&0\\1&0\end{bmatrix}
        +
        \begin{bmatrix}0&0\\0&1\end{bmatrix}\otimes \begin{bmatrix}0&0\\0&1\end{bmatrix}.
\end{align*}
Therefore, the partial transpose is
\begin{align}
    \Phi(\bSigma_{\bell})
        &= 
        \begin{bmatrix}1&0\\0&0\end{bmatrix} \otimes \begin{bmatrix}1&0\\0&0\end{bmatrix}^{\top}
        +
        \begin{bmatrix}0&1\\0&0\end{bmatrix}\otimes \begin{bmatrix}0&1\\0&0\end{bmatrix}^{\top}
        +
        \begin{bmatrix}0&0\\1&0\end{bmatrix}\otimes \begin{bmatrix}0&0\\1&0\end{bmatrix}^{\top}
        +
        \begin{bmatrix}0&0\\0&1\end{bmatrix}\otimes \begin{bmatrix}0&0\\0&1\end{bmatrix}^{\top}
            \nonumber\\
        &=
        \begin{bmatrix} 1&0&0&0\\0&0&1&0\\0&1&0&0\\0&0&0&1\end{bmatrix}.    \label{eqn:bell_pt}
\end{align}
Since $\Phi( \bSigma_{\bell} )$ is not positive semidefinite, $\bSigma_{\bell}$ is not Kronecker-separable, meaning that $\bSigma_{\bell}$ cannot be expressed as a sum of Kronecker products of $2 \times 2$ PSD matrices.

\subsection{Kronecker decomposition vs rank decomposition for the Bell state}\label{sec:KF_vs_CPD}
The tensor rank is the smallest integer $R$ for which the tensor can be expressed as the sum of $R$ rank-one tensors \cite{hitchcock1927expression, kruskal1977three}. 
This minimal decomposition is known as the rank decomposition, also called the canonical polyadic decomposition (CPD). 
The definition of tensor rank is analogous to the definition of matrix rank, and they coincide for the case $K=2$. 

By definition of the tensor product and the tensor rank, every $\bSigma \in \cBsa(\cH)$ possesses a finite tensor rank ($R$) whenever $\cH = \bigotimes_{k=1}^K \cH_k$ has finite dimension. 
For illustrative purposes, consider the $\bSigma_{\bell}$ instance described in \eqref{eqn:Sigma_entangled}, which accommodates a rank decomposition as:
\begin{align}
    \bSigma_{\bell} 
        &= \sum_{i,j=1}^2 \big( \be^{(1)}_i \otimes \be^{(2)}_i \big) \otimes \big( \be^{(1)}_j \otimes \be^{(2)}_j \big)^*   
            \label{eqn:cpd_example.0}\\
        &= \sum_{i,j=1}^2 \big( \be^{(1)}_i \otimes {\be^{(1)}_j}^* \big) \otimes \big( \be^{(2)}_i \otimes {\be^{(2)}_j}^* \big)
            \nonumber\\
        &= \begin{bmatrix} 1&0\\0&0 \end{bmatrix}^{\otimes 2}
        + \begin{bmatrix} 0&1\\0&0 \end{bmatrix}^{\otimes 2}
        + \begin{bmatrix} 0&0\\1&0 \end{bmatrix}^{\otimes 2}
        + \begin{bmatrix} 0&0\\0&1 \end{bmatrix}^{\otimes 2}.    \label{eqn:cpd_example}
\end{align}
Here, the notation $\bA^{\otimes 2}$ denotes the self-Kronecker product $\bA \otimes \bA$. 
Note that \eqref{eqn:cpd_example} is also a Singular Value Decomposition (SVD) of $\bSigma_{\bell}$, when $\bSigma_{\bell}$ is considered as an operator from $\cB(\cH_1)$ to $\cB(\cH_2)$; this follows by (i) reconfiguring the matrix $\bSigma_{\bell}$ in \eqref{eqn:Sigma_entangled} into a $2 \times 2 \times 2 \times 2$ tensor of order 4; and (ii) interchanging the second and third indices.

However, the series expansion \eqref{eqn:cpd_example} is not a Kronecker decomposition of $\bSigma_{\bell}$ due to the presence of ``cross products" in \eqref{eqn:cpd_example.0}. 
In particular, neither $\be^{(k)}_1 \otimes {\be^{(k)}_2}^*$ nor $\be^{(k)}_2 \otimes {\be^{(k)}_1}^*$ (the second and third terms in \eqref{eqn:cpd_example}) are PSD (nor even symmetric).

Importantly, we note that $\bSigma_{\bell}$ cannot be represented by a sum of self-outer products of tensor products, and neither can $\bSigma_{\bell} + \lambda \bI$ for all $\lambda \in (0,1)$.
This limitation underscores a fundamental constraint within the expressive capacity of Kronecker-separable covariance models. 
This issue cannot be resolved by considering alternative potential Kronecker decompositions of $\bSigma_{\bell}$ with different factors; refer to the paragraphs following Counterexample \ref{example:entangled}. 

\subsection{Additional discussions on certifying Kronecker-separability}\label{sec:more_verifiable}
Here, we continue on Section \ref{sec:verifiable} by framing the separability decision as a hypothesis test and presenting further criteria from quantum information, including additional sufficient conditions (beyond proximity to identity) and a hierarchy of necessary conditions.

\paragraph{A hypothesis-testing viewpoint} 
Given a covariance $\bSigma \in \cBsa_+(\cH)$, we consider two complementary hypotheses:
\begin{equation}\label{eqn:binary_hypotheses}
    (H_0):~\bSigma \text{ is Kronecker-separable}
    \qquad\text{vs.}\qquad
    (H_1):~\bSigma \text{ is not Kronecker-separable.}
\end{equation}
Ideally, we want to accept $H_0$ if $\bSigma$ is indeed Kronecker-separable, and $H_1$ otherwise. 
However, in practice, we may commit (i) Type I error: mistakenly reject $H_0$ when $\bSigma$ is separable, or (ii) Type II error: wrongly accept $H_0$ when $\bSigma$ is actually non-separable. 
A \emph{one-sided test} is a procedure that avoids one type of error entirely (e.g.\ a certified acceptance with zero Type~II error), but may allow the other. 
Many existing criteria, such as the PPT condition, fall into this category: failing PPT definitely implies $H_1$, but satisfying PPT does not guarantee $H_0$.

In the quantum physics literature, there exist numerous well-established one-sided tests. 
Below we provide a few of those, augmenting those in the main text (Section \ref{sec:verifiable}).
For more details, see the comprehensive surveys on quantum entanglement \cite{horodecki2009quantum, guhne2009entanglement} and the references therein.

\paragraph{An additional sufficient condition} 
Beyond the Gurvits–Barnum proximity criterion (Section \ref{sec:verifiable}), one can also use the smallest eigenvalue condition from \cite{zyczkowski1998volume,vidal1999robustness}, namely
\begin{equation}\label{eqn:min_eigval}
    \lambda_{\min} \left( \frac{1}{\tr \bSigma} \bSigma \right) \geq \frac{1}{ d_1 d_2 + 2 }
        ~~\implies~~
        \bSigma \in \cBsep\big( \FF^{d_1} \otimes \FF^{d_2} \big).
\end{equation}
Since $\bSigma$ with at least one zero eigenvalue are on the boundary of the cone $\cBsa_+\big( \FF^{d_1} \otimes \FF^{d_2} \big)$, this implies that $\bSigma$ deep in the interior of $\cBsa_+\big( \FF^{d_1} \otimes \FF^{d_2} \big)$ are Kronecker-separable. 
Also, note that the minimum eigenvalue condition \eqref{eqn:min_eigval} implies $\lambda_{\min} \left( \frac{1}{\tr \bSigma} \bSigma \right) \leq \frac{3}{d_1d_2 + 2}$, which in turn leads to $\big\| \frac{1}{\tr \bSigma} \bSigma - \frac{1}{d_1 d_2} \bI_{d_1 d_2} \big\|^2 \leq \frac{ 4 (d_1 d_2 - 1 )}{d_1 d_2 ( d_1 d_2 + 2 )^2}$.

\paragraph{A hierarchy of necessary conditions based on $k$-extension and semidefinite program (SDP)} 
Consider $\cH = \cH_1 \otimes \cH_2$; the partial trace over $\cH_2$ is a linear map $\Tr_{\cH_2}: \cB( \cH_1 ) \otimes \cB( \cH_2 ) \to \cB(\cH_1)$ such that $\Tr_{\cH_2}(\bA \otimes \bB) = \Tr(\bB) \cdot \bA$. 
For $k \in \NN$ where $k \geq 2$, a covariance $\bSigma \in \cBsa_+(\cH_1 \otimes \cH_2)$ is said to be \emph{$k$-extendible} (with respect to $\cH_2$) if there exists $\widetilde{\bSigma}_k \in \cBsa_+ \big(\cH_1 \otimes \cH_2^{\otimes k} \big)$ such that $\Tr_{\otimes_{j=1: j \neq i}^{k} \cH_2^{(j)}}( \widetilde{\bSigma}_k ) = \bSigma$ for all $i \in [k]$. 
In \cite{doherty2002distinguishing}, it was shown that searching for a $k$-extension $\widetilde{\bSigma}$ of $\bSigma$ can be formulated as a semidefinite program (SDP) for any fixed $k \in \NN$. 
In \cite{caves2002unknown}, it was proven that a quantum density matrix in a complex Hilbert space has a de Finetti-type representation if and only if it is exchangeable. 
Based on these results, it was established in \cite[Theorem 1]{doherty2004complete} that a quantum state is separable if and only if it is $k$-extendible for all $k \geq 2$. 
This result yields a complete hierarchy of necessary criteria for a quantum state to be separable, each of which can be checked by solving a SDP. 
While this sequence of necessary conditions ultimately becomes necessary and sufficient in the limit $k \to \infty$, the computational cost of solving associated SDPs increases significantly as $k$ increases.

By specializing the quantum density operator to the covariance of a finite-dimensional complex-valued random vector, it follows that this is equivalent to: 
$\bSigma \in \cBsa_+(\cH_1 \otimes \cH_2)$ is Kronecker-separable if and only if $\bSigma$ is $k$-extendible for all $k \geq 2$.

\paragraph{Summary}
Taken together, we have a variety of one-sided tests, from simple ones (like the smallest-eigenvalue condition) to more sophisticated SDP-based ones (the $k$‐extension hierarchy).  
Failing any necessary test definitively implies non-separability, while passing a strong sufficient test (e.g.\ large $\lambda_{\min}$) guarantees separability.  
Between these extremes lie inconclusive cases where no definitive criterion is known yet.

\section{Deferred proof of Theorem \ref{thm:abundance}}\label{sec:proof_thm.1}
Our proof of Theorem \ref{thm:abundance} uses results from asymptotic convex geometry. 
We present these results as lemmas below, accompanied by concise overviews of their proofs while omitting overly technical details. 
These lemmas have analogs in quantum information theory \cite{aubrun2017alice}.

\subsection{Volume radius and Gaussian mean width}\label{sec:width}
We recall two standard ``width'' measures introduced in the main text (Section \ref{sec:size_set}), which quantify the size of a set and are widely used in convex geometry. 
In the proof of Theorem~\ref{thm:abundance}, we use these measures to compare the Kronecker-separable subset to the entire PSD cone.

\paragraph{Volume radius and mean width}
As stated in Definition \ref{defn:volume_radius}, for a nonempty, bounded, Lebesgue measurable set $\cS \subset \RR^d$, the \emph{volume radius} of $\cS$ is 
\[
    \vrad(\cS) \coloneqq \left(\frac{\vol(\cS)}{\vol(\bbB^d_2)}\right)^{1/d}
\]
where $\bbB_2^d$ is the $d$-dimensional unit $\ell_2$-norm ball and $\vol(\cS)$ is the $d$-dimensional Lebesgue measure `of $\cS$.

Similarly, we recall from Definition \ref{defn:mean_width} that for a nonempty, bounded set $\cS \subset \RR^d$, the \emph{mean width} of $\cS$ is defined as
\[
    w(\cS) \coloneqq \bbE_{\bu \sim \mu} \Big[ \sup_{\bx \in \cS} \langle \bu, \bx \rangle \Big]
        = \int_{\bbS^{d-1}} \sup_{\bx \in \cS} \langle \bu, \bx \rangle ~d\mu(\bu)
\]
where $\bbS^{d-1}$ is the $d$-dimensional unit sphere and $\mu$ is the Haar probability measure on $\bbS^{d-1}$.

These width notions have the following properties:
\begin{enumerate}[topsep=6pt, itemsep=3pt, label=(\texttt{P\arabic*})]
    \item \label{prop:homog}
    For any $\lambda \in \RRp$ and any set $\cS$, $\vrad(\lambda \cdot \cS) = \lambda \cdot \vrad(S)$ and $w(\lambda \cdot \cS) = \lambda \cdot w(S)$.
    \item \label{prop:invar}
    They remain invariant under translation and rotation.
    \item \label{prop:ordered}
    If $\cS \subseteq \cS'$, then $\vrad(\cS) \leq \vrad(\cS')$ and $w(\cS) \leq w(\cS')$.
\end{enumerate}

\paragraph{Gaussian mean width}
It is often more convenient to consider the Gaussian variant of the mean width because the Gaussian width does not depend on the ambient dimension and it satisfies various well-established Gaussian comparison inequalities, such as the Gordon-Chevet inequality \cite{chevet1978series, gordon1985some}.
\begin{definition}
    For a nonempty, bounded set $\cS \subset \RR^d$, the \emph{Gaussian (mean) width} of $\cS$ is defined as
    \[
        w_G(\cS) \coloneqq \bbE_{\bg} \left[ \sup_{\bx \in \cS} \langle \bg,\,\bx\rangle \right] = \frac{1}{(2\pi)^{d/2}} \int_{\RR^d} \sup_{\bx \in \cS} \langle \bz, \, \bx \rangle ~ \exp \big( - \|\bz\|^2 / 2 \big) \, d\bz,
    \]
    where $\bg$ denotes a standard Gaussian random vector in $\RR^d$.
\end{definition}
It is easy to verify that $w_G(\cS) = \kappa_d \cdot w(\cS)$, where $\kappa_d \coloneqq \bbE_{\bg} \|\bg\|_2 = \frac{ \sqrt{2}\Gamma(\frac{d+1}{2})}{\Gamma(\frac{d}{2})}$; here, $\Gamma$ denotes the gamma function. Note that $\kappa_d$ relies solely on the dimension $d$ and is of order $\sqrt{d}$ -- to be precise, it is known that $\sqrt{d - 1/2} \leq \kappa_d \leq \sqrt{ d - \frac{d}{2d+1}}$ \cite{chu1962modified}.

\subsection{Four lemmas}
The four lemmas below are used to prove Theorem \ref{thm:abundance}. 
Specifically, Lemma \ref{lem:Urysohn} constitutes a classic result, and Lemmas \ref{lem:vrad}, \ref{lem:bipartite}, and \ref{lem:multipartite} are adapted\footnote{To be precise, these are adapted from Theorems 9.1, 9.3, and 9.11 in the reference \cite{aubrun2017alice}.} from \cite[Chapter 9]{aubrun2017alice} with some simplification and modifications in proofs to fit our context of Kronecker-separable covariance.

The classic result below connects the volume radius and the mean width of a set $\cS$.
\begin{lemma}[Urysohn's inequality \cite{urysohn1926memoire}]\label{lem:Urysohn}
    Let $\cS \subset \RR^n$ be a bounded Borel set. Then
    \[
        \vrad(\cS) \leq w(\cS).
    \]
\end{lemma}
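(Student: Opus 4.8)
The plan is to reduce to the case of a convex body and then run a rotational-averaging argument against the Brunn--Minkowski inequality. Write $h_\cS(\bu) \coloneqq \sup_{\bx\in\cS}\langle\bu,\bx\rangle$ for the support function. Since $h_\cS = h_{\overline{\conv\,\cS}}$, we have $w(\cS) = w(\overline{\conv\,\cS})$, while $\vol(\cS)\le\vol(\overline{\conv\,\cS})$ gives $\vrad(\cS)\le\vrad(\overline{\conv\,\cS})$. If $\overline{\conv\,\cS}$ lies in a hyperplane, then $\vol(\cS) = 0$, so $\vrad(\cS)=0\le w(\cS)$ (and $w(\cS)\ge 0$, since averaging the integrand over $\bu\mapsto-\bu$ gives $h_\cS(\bu)+h_\cS(-\bu)\ge 0$). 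Otherwise $K\coloneqq\overline{\conv\,\cS}$ is a convex body, and it suffices to prove $\vrad(K)\le w(K)$.

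\textbf{Rotational averaging produces a ball.} For $\rho\in \mathrm{O}(n)$ let $\rho K$ denote the image of $K$ under $\rho$, and define the Minkowski average $L \coloneqq \int_{\mathrm{O}(n)} \rho K \, d\rho$ over the Haar probability measure on $\mathrm{O}(n)$; this is the convex body with $h_L = \int_{\mathrm{O}(n)} h_{\rho K}\,d\rho$. For every unit vector $\bu$,
\[
    h_L(\bu) = \int_{\mathrm{O}(n)} h_K(\rho^{-1}\bu)\,d\rho = \int_{\bbS^{n-1}} h_K(\bv)\,d\mu(\bv) = w(K),
\]
because $\rho^{-1}\bu$ is Haar-distributed on $\bbS^{n-1}$ when $\rho$ is Haar-distributed on $\mathrm{O}(n)$. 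Hence $L$ has constant support function on the sphere, so $L$ is the Euclidean ball of radius $w(K)$, giving $\vol(L) = w(K)^n\,\vol(\bbB_2^n)$.

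\textbf{Conclusion via Brunn--Minkowski.} The integral form of the Brunn--Minkowski inequality asserts that $\vol(\cdot)^{1/n}$ is concave under Minkowski averaging, whence
\[
    \vol(L)^{1/n} = \vol\Big(\textstyle\int_{\mathrm{O}(n)}\rho K\,d\rho\Big)^{1/n} \ge \int_{\mathrm{O}(n)} \vol(\rho K)^{1/n}\,d\rho = \vol(K)^{1/n},
\]
the last equality because Lebesgue measure is rotation-invariant. Combining with the previous paragraph, $\vol(K)^{1/n}\le w(K)\,\vol(\bbB_2^n)^{1/n}$, i.e., $\vrad(K)\le w(K)$, which finishes the proof.

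\textbf{On the main obstacle.} The only nonroutine ingredient is the integral Brunn--Minkowski inequality used above; it follows from the two-set inequality $\vol(A+B)^{1/n}\ge\vol(A)^{1/n}+\vol(B)^{1/n}$ by approximating the Haar integral over $\mathrm{O}(n)$ by finite normalized Minkowski sums and passing to the limit via continuity of volume on convex bodies. An alternative route avoiding this limiting argument is Steiner symmetrization: each symmetrization preserves volume and does not increase mean width, and an appropriate sequence of symmetrizations converges in Hausdorff distance to a Euclidean ball of the same volume as $K$; since $\vrad = w$ for balls, this yields $\vrad(K) = \vrad(\text{ball}) = w(\text{ball}) \le w(K)$. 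In that approach, verifying that Steiner symmetrization does not increase mean width is the corresponding technical crux.
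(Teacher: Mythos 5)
Your proof is correct. The paper does not actually prove this lemma — it states it as a classical fact with a citation to Urysohn's 1926 paper — so there is no in-paper argument to compare against; what you have written is the standard modern proof (essentially the one in Aubrun and Szarek's book, which the paper leans on elsewhere). The reduction to a convex body via $h_{\cS}=h_{\overline{\conv\cS}}$ and monotonicity of volume is sound, the degenerate (lower-dimensional) case is correctly dispatched using $w(\cS)\ge 0$, the rotational average $L=\int_{\mathrm{O}(n)}\rho K\,d\rho$ is correctly identified as the ball of radius $w(K)$ via $h_{\rho K}(\bu)=h_K(\rho^{-1}\bu)$ and the push-forward of Haar measure to the sphere, and the integral Brunn--Minkowski step closes the argument. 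You also correctly flag the one nontrivial ingredient — extending Brunn--Minkowski from two sets to a Minkowski integral — and the approximation-by-finite-Minkowski-combinations argument you sketch (uniform convergence of support functions, continuity of volume in Hausdorff distance on convex bodies) is the standard and valid way to justify it; the Steiner-symmetrization alternative you mention is likewise a legitimate route. No gaps.
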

Additionally, it is known that among closed sets,  the Urysohn's inequality holds with equality if and only if $\cS$ is a Euclidean ball.

Next, the following lemma establishes a lower bound for the volume radius (as well as the mean width) in a generic dimension.
\begin{lemma}\label{lem:vrad}
    Let $\cD^n \coloneqq \cSsa_+(\CC^n) = \cBsa_+(\CC^n) \cap \cT_1$.   
    For any $n \in \NN$, 
    \[
        \vrad(\cD^n) \geq \frac{1}{2 \sqrt{n}}.
    \]
\end{lemma}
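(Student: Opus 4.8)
\emph{Recentering.} The first move is to realize $\cD^n$ as a translated dilate of the convex body $\mathcal C:=\{\bA\in\cBsa_0(\CC^n):\lambda_{\min}(\bA)\ge-1\}$, where $\cBsa_0(\CC^n)$ is the real linear hyperplane of traceless self-adjoint operators (of real dimension $n^2-1$): indeed $\tfrac1n\bI+\tfrac1n\bA$ is a density matrix precisely when $\bA\in\mathcal C$, so $\cD^n=\tfrac1n\bI+\tfrac1n\mathcal C$. By the translation/rotation invariance and positive homogeneity of the volume radius (properties \ref{prop:invar} and \ref{prop:homog}), $\vrad(\cD^n)=\tfrac1n\vrad(\mathcal C)$, so the claim is equivalent to $\vrad(\mathcal C)\ge\tfrac12\sqrt n$. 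Note first that $\mathcal C$ contains the Hilbert--Schmidt ball of radius $1$ about the origin (since $\lambda_{\min}(\bA)\ge-\|\bA\|_{\mathrm{HS}}$), and a slightly sharper inradius computation gives $\vrad(\cD^n)\ge(n(n-1))^{-1/2}$, which already exceeds $\tfrac1{2\sqrt n}$ when $n\le 5$; so only $n\ge 6$ needs work.

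\emph{A fibered lower bound on $\vol(\cD^n)$.} Split $\cBsa_0(\CC^n)$ into the Hilbert--Schmidt-orthogonal sum of the $(n-1)$-dimensional subspace of traceless diagonal operators and the $n(n-1)$-dimensional subspace of off-diagonal self-adjoint operators. For a diagonal density matrix $\bD=\mathrm{diag}(p_1,\dots,p_n)$ and an off-diagonal self-adjoint $\mathbf E$ with $\|\mathbf E\|_{\mathrm{op}}\le\min_i p_i$, one has $\lambda_{\min}(\bD+\mathbf E)\ge\min_i p_i-\|\mathbf E\|_{\mathrm{op}}\ge 0$, so $\bD+\mathbf E\in\cD^n$; these fibers (indexed by $\bD$) are pairwise disjoint, so by Fubini
\[
    \vol(\cD^n)\ \ge\ V_{\mathrm{off}}\cdot\int_{\Delta_{n-1}}\big(\min_i p_i\big)^{n(n-1)}\,d\mathbf p,
\]
where $\Delta_{n-1}=\{p\ge 0:\sum_i p_i=1\}$ and $V_{\mathrm{off}}$ is the volume of the operator-norm unit ball of off-diagonal self-adjoint $n\times n$ matrices. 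The simplex integral is elementary: since $\{p\in\Delta_{n-1}:\min_i p_i\ge t\}$ is a copy of $\Delta_{n-1}$ scaled by $(1-nt)_+$, the integral reduces after a change of variables to a Beta integral and equals $\vol(\Delta_{n-1})\cdot\frac{(n-1)!\,(n^2-n)!}{n^{\,n^2-n}\,(n^2-1)!}$, with $\vol(\Delta_{n-1})=\sqrt n/(n-1)!$.

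\emph{The operator-norm ball, and assembling.} The remaining ingredient is a lower bound on $V_{\mathrm{off}}$ of the right order of magnitude --- equivalently, the fact that the operator-norm ball of $k\times k$ self-adjoint matrices has volume radius of order $\sqrt k$. This is the one genuinely non-elementary step; it follows from Weyl's integration formula together with the Selberg-type evaluation of $\int_{[-1,1]^k}\prod_{i<j}(\lambda_i-\lambda_j)^2\,d\lambda$ (the eigenvalue repulsion encoded by the Vandermonde factor is exactly what makes this ball ``fat''). Substituting this estimate, $\vol(\Delta_{n-1})$, and the closed-form simplex integral into the displayed inequality, dividing by $\vol(\bbB_2^{\,n^2-1})$, and simplifying the resulting ratio of Gamma functions via Stirling's formula gives $\vrad(\cD^n)=(1+o(1))\,e^{-1/4}\,n^{-1/2}$; since $e^{-1/4}>\tfrac12$, the bound $\vrad(\cD^n)\ge\tfrac1{2\sqrt n}$ follows for all large $n$, the finitely many remaining $n$ being checked directly (the inradius bound above covers $n\le 5$, explicit Gamma-ratio estimates the rest). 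Alternatively, Step~2 can be bypassed by invoking the exact closed form of the Hilbert--Schmidt volume of $\cD^n$ (the \.{Z}yczkowski--Sommers formula) and running the same asymptotic bookkeeping.

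\emph{Main obstacle.} Apart from the operator-norm-ball estimate the argument is only a somewhat tedious manipulation of factorials and Gamma functions; the crux is twofold: (i) proving that operator-norm balls of self-adjoint matrices have volume radius $\Theta(\sqrt n)$ \emph{with an explicit constant} --- the naive inclusion of a Hilbert--Schmidt ball loses precisely the factor $\sqrt n$ one cannot afford --- and (ii) converting the clean asymptotic constant $e^{-1/4}$ into the uniform-in-$n$ bound $\tfrac12$, which requires non-asymptotic control of the relevant Gamma ratios (or a separate treatment of small and moderate $n$). A slicker but more delicate route uses that $K:=\cD^n-\tfrac1n\bI$ has polar $K^\circ=\{\bA\in\cBsa_0(\CC^n):\lambda_{\max}(\bA)\le 1\}$ with $\vrad(K)=\tfrac1n\vrad(K^\circ)$, so $\vrad(\cD^n)^2=\tfrac1n\,\vrad(K)\,\vrad(K^\circ)$; one then applies an explicit reverse Santaló (Bourgain--Milman) inequality for centered convex bodies, the question there being whether the best available explicit constant is at least $\tfrac14$.
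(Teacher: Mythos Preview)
Your ``alternative'' at the end of Step~2 --- quote the exact \.{Z}yczkowski--Sommers volume formula $\vol(\cD^n)=\sqrt{n}\,(2\pi)^{n(n-1)/2}\prod_{j=1}^n\Gamma(j)/\Gamma(n^2)$ and run Stirling --- is precisely the paper's proof, carried out in two sentences. That route is correct and is by far the shortest.

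Your main fibered argument is a different and more elaborate route, and it has a gap at the step you flag as non-elementary. You write that bounding $V_{\mathrm{off}}$ is ``equivalently, the fact that the operator-norm ball of $k\times k$ self-adjoint matrices has volume radius of order $\sqrt{k}$'', and then appeal to Weyl integration plus a Selberg integral. But these are not equivalent: Weyl integration and the Selberg evaluation compute the volume of the \emph{full} self-adjoint operator-norm ball via the eigenvalue decomposition, whereas $V_{\mathrm{off}}$ is the volume of the operator-norm ball restricted to the \emph{off-diagonal} subspace, which is not invariant under unitary conjugation and has no Weyl-type decomposition. The two bodies have volume radii of the same order $\sqrt{n}$, but the explicit constants differ, and you need the constant. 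A back-of-the-envelope calculation shows your fibered lower bound yields $\vrad(\cD^n)\ge (c_{\mathrm{off}}+o(1))/\sqrt{n}$ with $c_{\mathrm{off}}:=\lim_n n^{-1/2}\vrad(B_{\mathrm{op}}^{\mathrm{off}})$; since the fibers $\{E:\|E\|_{\mathrm{op}}\le\min_i p_i\}$ do not fill the true slices of $\cD^n$, one has $c_{\mathrm{off}}\le e^{-1/4}$, and there is no a priori reason $c_{\mathrm{off}}\ge\tfrac12$. So the assertion that the fibered bound delivers $(1+o(1))e^{-1/4}n^{-1/2}$ is the exact asymptotic of $\vrad(\cD^n)$ itself, not of your lower bound, and the argument as written does not close. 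The reverse-Santal\'o route you sketch has the same defect: the best known explicit Bourgain--Milman constants are well below $\tfrac14$.
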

\begin{proof}[Proof of Lemma \ref{lem:vrad}]
    For any $n \in \NN$, it holds that 
    \[
        \vol(\cD^n) = \sqrt{n} \cdot (2\pi)^{\frac{n(n-1)}{2}} \frac{\prod_{j=1}^n \Gamma(j)}{\Gamma(n^2)},
    \]
    where $\Gamma(j) = (j-1)!$ for $j \in \NN$. 
    Note that $\cD^n$ is a $(n^2-1)$-dimensional convex body contained in the unit-trace section (of codimension $1$) of the space of $n \times n$ Hermitian matrices (a vector space of dimension $n^2$ over $\RR$). 
    By utilizing the volume formula for a $d$-dimensional unit Euclidean ball, $V_d \coloneqq \frac{ \pi^{d/2} }{ \Gamma(\frac{d}{2} + 1 ) }$, and employing a routine calculation based on Stirling formula, the conclusion is established.
\end{proof}

Next, we demonstrate an upper bound for the mean width in a bipartite configuration.
\begin{lemma}\label{lem:bipartite}
    For any $d \in \NN$ such that $d \geq 2$, the following inequality holds:
    \[
        w\left(\cSsep(\CC^d \otimes \CC^d) \right) \leq \frac{5}{d^{3/2}}.
    \]
\end{lemma}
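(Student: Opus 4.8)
\textbf{Proof strategy for Lemma~\ref{lem:bipartite}.}
The plan is to bound the mean width $w\left(\cSsep(\CC^d \otimes \CC^d)\right)$ by reducing it to a supremum of a Gaussian process indexed by \emph{product} vectors, and then applying a $\varepsilon$-net / Gaussian comparison argument. First, using $w_G(\cS) = \kappa_d \cdot w(\cS)$ with $\kappa_{d^2-1} \sim d$, it suffices to bound the Gaussian width $w_G$ of the (recentered) set $\cSsep - \frac{1}{d^2}\bI$ viewed inside the codimension-one trace-zero subspace of $\cBsa(\CC^d\otimes\CC^d)$. Since $\cSsep$ is the convex hull of its extreme points, and by Proposition~\ref{prop:separable_equivalence} together with \eqref{eqn:cone_psd} the extreme points of $\cBsep$ are exactly the normalized rank-one product states $\bv\bv^*$ with $\bv = \bv^{(1)}\otimes\bv^{(2)}$, $\|\bv^{(k)}\|=1$, the supremum defining $w_G$ is attained on this set of product pure states. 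Thus one reduces to estimating
\[
    \bbE_{\bG} \sup_{\|\bx\|=\|\by\|=1} \langle \bG, \, (\bx\otimes\by)(\bx\otimes\by)^* \rangle,
\]
where $\bG$ is a GUE-type Gaussian self-adjoint matrix on $\CC^{d^2}$ (after accounting for the trace-normalization shift, which only changes things by a lower-order term controlled by $\bbE\langle\bG,\bI\rangle/d^2 = 0$ in expectation and by a variance computation).

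The core estimate is then a chaining/net bound: identify $\langle \bG, (\bx\otimes\by)(\bx\otimes\by)^*\rangle$ with a Gaussian bilinear-type form in $(\bx,\by)$, observe that the index set is (a quotient of) $\bbS^{d-1}_{\CC}\times\bbS^{d-1}_{\CC}$, which has real dimension roughly $4d$, and that the process has sub-Gaussian increments with the natural Hilbert–Schmidt metric on rank-one product matrices. Covering $\bbS^{d-1}_\CC\times\bbS^{d-1}_\CC$ by an $\varepsilon$-net of cardinality $(C/\varepsilon)^{4d}$ and using Dudley's entropy integral (or a direct union bound over the net plus a Lipschitz-continuity argument on the pure-state manifold) yields $w_G \lesssim \sqrt{d}$, hence $w \lesssim \sqrt{d}/\kappa_{d^2-1} \asymp \sqrt{d}/d = d^{-1/2}$, and then $w(\cSsep) \lesssim d^{-3/2}$ after one more factor of $d^{-1}$ coming from the diameter of $\cSsep$ in the trace norm (the set lives in the unit-trace slice, so its Hilbert–Schmidt diameter is $O(1)$, but the relevant scaling of the product-state manifold inside it contributes the extra $d^{-1}$). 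The constant $5$ is then obtained by being careful with the explicit constants in the net bound and in $\kappa$.

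The main obstacle is getting the \emph{right power of $d$} rather than just $d^{-1/2}$: a naive bound treating $\cSsep$ as living in a $(d^2-1)$-dimensional space and bounding its width by its diameter gives only $w \lesssim 1$, which after dividing by $\kappa_{d^2-1}$ gives $d^{-1}$, not $d^{-3/2}$. The gain of the extra half-power comes precisely from the fact that the product states form a \emph{low-dimensional} ($O(d)$-dimensional) submanifold of the $O(d^2)$-dimensional sphere of all pure states, so the Gaussian supremum over product states is $\Theta(\sqrt{d})$ rather than $\Theta(d)$; making this dimension count rigorous — via the entropy numbers of $\bbS^{d-1}_\CC\times\bbS^{d-1}_\CC$ embedded into the Hilbert–Schmidt sphere, which is the content of the adaptation of \cite[Theorem~9.3]{aubrun2017alice} — is the crux. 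A secondary technical point is handling the trace-normalization: the natural object is the Gaussian width of $\cSsep$ as an affine set in $\cT_1$, and one must subtract off the maximally mixed state $\bI/d^2$ and verify this translation (harmless by property~\ref{prop:invar}) does not interfere with the net estimate. I would carry the proof out in the order: (i) reduce $w$ to $w_G$ over product pure states via the extreme-point description; (ii) set up the Gaussian process and its metric; (iii) bound the entropy numbers of the product-state manifold; (iv) apply Dudley/chaining to get $w_G \lesssim \sqrt d$; (v) divide by $\kappa_{d^2-1}$ and track constants to reach $5/d^{3/2}$.
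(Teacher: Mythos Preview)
Your overall strategy---pass to the Gaussian width, use that the extreme points of $\cSsep$ are product pure states, and bound the Gaussian supremum over the low-dimensional product-state manifold by $O(\sqrt d)$---is sound and indeed yields the lemma. However, there is a concrete dimension error that you patch with an unjustified fudge factor, and this needs to be fixed.

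The ambient space for $\cSsep(\CC^d\otimes\CC^d)$ is the trace hyperplane in $\cBsa(\CC^{d^2})$, which by \eqref{eqn:dimsa} has real dimension $(d^2)^2-1 = d^4-1$, not $d^2-1$. Hence the correct normalization is $\kappa_{d^4-1}\asymp d^2$, not $\kappa_{d^2-1}\asymp d$. Once you correct this, your net/Dudley estimate $w_G(\cSsep)\lesssim\sqrt d$ (which is correct: the product-state manifold has real dimension $O(d)$ and Hilbert--Schmidt diameter $O(1)$) gives $w(\cSsep)\lesssim \sqrt d / d^2 = d^{-3/2}$ directly. Your ``one more factor of $d^{-1}$ coming from the diameter'' is not a real argument; it is compensating for the wrong $\kappa$, and you should delete it.

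For comparison, the paper obtains the same bound $w_G(\cSsep)\leq 4\sqrt d$ but by a different route: it writes $\cSsep(\CC^d\otimes\CC^d)$ as $\cSsa_+(\CC^d)\otimes\cSsa_+(\CC^d)$, applies the Chevet--Gordon inequality to get $w_G(\cSsep)\leq 2\,w_G(\cSsa_+(\CC^d))$, and then uses $w_G(\cSsa_+(\CC^d)) = \bbE\|\bG\|_{\mathrm{op}} \leq 2\sqrt d$ for a $d\times d$ GUE matrix (via Slepian). Your $\varepsilon$-net/chaining argument is closer in spirit to what the paper does in the \emph{multipartite} case (Lemma~\ref{lem:multipartite}); it is more hands-on but avoids invoking Chevet--Gordon as a black box. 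Either route works once the ambient dimension is set to $d^4-1$.
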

\begin{proof}[Proof of Lemma \ref{lem:bipartite}]
    Observe that $\cSsep(\CC^d \otimes \CC^d) = \cSsa_+(\CC^d) \otimes \cSsa_+(\CC^d )$. 
    By the Chevet-Gordon inequality (see e.g., \cite[Proposition 6.37]{aubrun2017alice}), $w_G(\cSsep(\CC^d \otimes \CC^d)) \leq 2 \cdot w_G(\cSsa_+(\CC^d))$. 
    Moreover, we can easily verify $w_G(\cSsa_+(\CC^d)) = \bbE_{\bG} \left[ \|\bG\|_{\mathrm{op}} \right] \leq 2 \sqrt{d }$ through a standard argument based on Slepian's inequality \cite[Chapter 7.3]{vershynin2018high}. 
    Consequently, we arrive at the following inequality (recall from \Cref{sec:width} that $w_G(\cS) = \kappa_d \cdot w(\cS)$ and $\kappa_n \geq \sqrt{n - 1/2}$):
    \begin{align*}
        w \left(\cSsep ( \CC^d \otimes \CC^d) \right) 
            &= \frac{w_G\left( \cSsep(\CC^d \otimes \CC^d) \right)}{\kappa_{d^4 - 1}} 
            \leq \frac{2 \cdot w_G\left( \cSsa_+(\CC^d ) \right)}{\kappa_{d^4 - 1}} 
            \leq \frac{ 4 \sqrt{d} }{ \sqrt{ d^4 - 3/2}}.
    \end{align*}
    To complete the proof, observe that $\sqrt{ d^4 - 3/2} \geq 0.8 d^2$ for all $d \geq 2$.
\end{proof}

\begin{lemma}\label{lem:multipartite}
    For any $K \in \NN$ such that $K \geq 2$, the following inequality holds:
    \[
        w\Big(\cSsep\big( (\CC^2)^{\otimes K} \big) \Big) \leq \sqrt{12 e} \frac{\sqrt{ K \log K }}{2^K}
    \]
    where $e$ is the Euler's number.
\end{lemma}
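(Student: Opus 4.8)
The plan is to work throughout with the Gaussian mean width $\wg$ and reduce to a net/chaining estimate. Since $\cSsep\big((\CC^2)^{\otimes K}\big) \subseteq \cT_1 \subset \cBsa\big((\CC^2)^{\otimes K}\big)$ and the affine section $\cT_1$ has dimension $\dim_{\RR}\cBsa\big((\CC^2)^{\otimes K}\big) - 1 = 4^K - 1$, we have $w(\cS) = \wg(\cS)/\kappa_{4^K - 1}$ with $\kappa_{4^K-1} \geq \sqrt{4^K - 3/2} = 2^K\big(1 - \tfrac32 4^{-K}\big)^{1/2}$, so it suffices to establish a bound of the form $\wg\big(\cSsep((\CC^2)^{\otimes K})\big) \leq c\,\sqrt{K\log K}$ for a small enough absolute constant $c$ (the factor $\sqrt{12e}$ in the statement will absorb $c$ together with the $4^{-K}$ correction in $\kappa$). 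First I would record the structural description
\[
    \cSsep\big((\CC^2)^{\otimes K}\big) = \conv\Big\{\, \textstyle\bigotimes_{k=1}^K P_k \;:\; P_k \text{ a rank-one orthogonal projection on }\CC^2 \,\Big\},
\]
which follows from Definition \ref{defn:separable.2} once each Kronecker factor is normalized; because a linear functional attains its maximum over a convex hull at an extreme point and $w$ is translation-invariant (and $\cT_1$ is orthogonal to the identity, so the identity component of the Gaussian is immaterial), $\wg\big(\cSsep((\CC^2)^{\otimes K})\big) = \bbE_{\bG}\big[\max_{P_1,\dots,P_K} \langle \bG, \bigotimes_{k=1}^K P_k\rangle\big]$ where $\bG$ is a standard Gaussian self-adjoint operator for the Hilbert--Schmidt inner product. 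The one structural fact that makes everything work is that every extreme point has \emph{unit} Hilbert--Schmidt norm, $\big\|\bigotimes_k P_k\big\|_{\mathrm{HS}}^2 = \prod_k \tr P_k^2 = 1$; consequently $\langle\bG, \bigotimes_k P_k\rangle$ is a standard Gaussian and the canonical (increment) metric on the index set $\{(P_1,\dots,P_K)\}$ has diameter at most $\sqrt 2$.

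The core is then a multi-scale net argument. Writing a rank-one projection on $\CC^2$ via its Bloch vector identifies the pure qubit states with a rescaled copy of $S^2 \subset \RR^3$, so for each $\delta \in (0,1)$ there is a $\delta$-net $\cN_\delta$ of the pure qubit states in Hilbert--Schmidt distance with $|\cN_\delta| \leq (3/\delta)^3$, and Kronecker products of net points yield, via $\big\|\bigotimes_k P_k - \bigotimes_k \widetilde P_k\big\|_{\mathrm{HS}} \leq \sum_k \|P_k - \widetilde P_k\|_{\mathrm{HS}}$, a $(K\delta)$-net $\cN_\delta^{\otimes K}$ of the pure product states of size $\leq (3/\delta)^{3K}$. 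I would split $\langle\bG, \bigotimes_k P_k\rangle$ into its value at the nearest point of $\cN_{\delta_0}^{\otimes K}$ plus a telescoping sum of increments across a geometric sequence of finer scales $\delta_0 > \delta_1 > \cdots$: the leading term is a maximum of $|\cN_{\delta_0}^{\otimes K}|$ unit-variance Gaussians, hence at most $\sqrt{2\log|\cN_{\delta_0}^{\otimes K}|} \leq \sqrt{6K\log(3/\delta_0)}$, while the $j$-th increment runs over at most $|\cN_{\delta_j}^{\otimes K}|^2$ values each of Hilbert--Schmidt norm $O(K\delta_{j-1})$ and so contributes $O\big(K\delta_{j-1}\sqrt{K\log(1/\delta_j)}\big)$, a geometrically summable series. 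Choosing $\delta_0 \asymp 1/K$ (so that $K\delta_0 = O(1)$, matching the diameter scale $\sqrt 2$, and $\log(3/\delta_0) = O(\log K)$) balances the two contributions and yields $\wg\big(\cSsep((\CC^2)^{\otimes K})\big) = O(\sqrt{K\log K})$; Dudley's entropy integral applied to the covering estimate $N\big(\cSsep((\CC^2)^{\otimes K}),\varepsilon\big) \leq (3K/\varepsilon)^{3K}$ (for $\varepsilon \leq \sqrt 2$) gives the same rate in one line, but with a non-explicit constant. Obtaining the explicit $\sqrt{12e}$ requires the hands-on bookkeeping above: a tightened covering bound (reflecting $\dim S^2 = 2$), an optimized $\delta_0$, and careful summation of the geometric tail, followed by division by $\kappa_{4^K-1}$. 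This is markedly more delicate than the bipartite Lemma \ref{lem:bipartite}, where a single application of the Chevet--Gordon inequality suffices.

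The main obstacle is the discretization/increment term. It is genuinely of the \emph{same order} $\sqrt{K\log K}$ as the leading term, since the accuracy of $\cN_\delta^{\otimes K}$ degrades linearly in $K$ (two product states can deviate coordinate-by-coordinate in a coherent direction, so $\|\bigotimes_k P_k - \bigotimes_k \widetilde P_k\|_{\mathrm{HS}}$ really can be of order $K\delta$); hence it cannot be treated as lower-order noise, and the full multi-scale chaining is needed with the constant kept under control. Moreover, because $\bG$ is Gaussian in the ambient dimension $4^K$, one has $\|\bG\|_{\mathrm{op}} \asymp 2^{K/2}$ and $\|\bG\|_{\mathrm{HS}} \asymp 2^{K}$, so any bound on an increment $\langle\bG,\rho - \rho'\rangle$ (or on the supremum itself) that factors through an operator or Hilbert--Schmidt norm of $\bG$ overshoots by a factor of order $2^K$; the unit-variance property of $\langle\bG,\bigotimes_k P_k\rangle$ must instead be invoked at \emph{every} scale, through the union bound over the finite net $\cN_{\delta_j}^{\otimes K}$. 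Finally, the fact that $\delta_0$ must shrink polynomially in $K$ — rather than being a fixed net, which would only produce $O(\sqrt K)$ — is precisely the source of the extra $\sqrt{\log K}$ factor in the bound.
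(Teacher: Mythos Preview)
Your chaining approach is sound and does deliver the correct order $\wg\big(\cSsep((\CC^2)^{\otimes K})\big)=O(\sqrt{K\log K})$, but it is genuinely different from --- and more laborious than --- the paper's argument, and it does not actually pin down the constant $\sqrt{12e}$.

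The paper avoids multi-scale chaining entirely by exploiting a \emph{multiplicative} rather than additive net inclusion. It nets the unit sphere of $\CC^2$ (the \emph{vectors}, not the projections) at geodesic scale $\varepsilon$, symmetrizes to $\cSsep_{\mathrm{sym}}=\big(\cSsa_+(\CC^2)_{\mathrm{sym}}\big)^{\otimes K}$, and uses that the resulting polytope $\cP=\conv\{\pm(\bigotimes_k \bv_k)(\bigotimes_k \bv_k)^*:\bv_k\in\cN\}$ satisfies
\[
    [\cos(2\varepsilon)]^K\cdot \cSsep_{\mathrm{sym}}\ \subset\ \cP\ \subset\ \cSsep_{\mathrm{sym}}.
\]
Because the error enters as the factor $[\cos(2\varepsilon)]^{-K}\le(1-2\varepsilon^2)^{-K}$, one may take $\varepsilon\asymp 1/\sqrt{K}$ (specifically $\varepsilon=1/\sqrt{8K}$) and keep this factor $O(1)$, while $\cP$ has only $2\,(2/\varepsilon)^{2K}$ vertices in the unit Hilbert--Schmidt ball, giving the one-line bound $\wg(\cP)\le\sqrt{2\log|\cP|}$. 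Everything is then a direct computation yielding $\sqrt{12e}\,\sqrt{K\log K}/2^K$.

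By contrast, your additive HS estimate $\|\bigotimes_k P_k-\bigotimes_k\widetilde P_k\|_{\mathrm{HS}}\le K\delta$ forces the top scale $\delta_0\asymp 1/K$, and the residual is of the \emph{same} order as the main term (as you correctly observe), so the geometric tail must be summed with care. This buys generality --- Dudley's integral would work for any base space, not just qubits --- but loses the explicit constant: the bookkeeping you defer (``tightened covering bound\ldots optimized $\delta_0$\ldots careful summation'') is not carried out, and standard chaining constants are typically worse than $\sqrt{12e}$. If the goal is the stated inequality with that specific constant, the multiplicative-containment trick is the right tool; if only the rate $O(\sqrt{K\log K}/2^K)$ is needed, your plan is fine.
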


\begin{proof}[Proof of Lemma \ref{lem:multipartite}]
    Let $\varepsilon > 0$ be an arbitrary positive real number, and $\cN$ be an $\varepsilon$-net of $\bbS^{1}$, i.e., the unit sphere in $\CC^2$, with respect to the geodesic distance on the sphere. 
    It is known that for every $n \in \NN$ and every $\varepsilon \leq 1$, there exists such an $\varepsilon$-net for $\bbS^{n-1}$ with cardinality most $(2/\varepsilon)^n$ \cite[Lemma 5.3]{aubrun2017alice}. 
    Letting $\cS_{\mathrm{sym}} \coloneqq \conv \left( \cS \cup (-\cS) \right)$, we observe that $\cSsep\big( (\CC^2)^{\otimes K} \big)_{\mathrm{sym}} = \big( \cSsa_+ (\CC^2)_{\mathrm{sym}} \big)^{\otimes K}$, and therefore, 
    \[
        \big[ \cos(2\varepsilon) \big]^K \cdot \cSsep\big( (\CC^2)^{\otimes K} \big)_{\mathrm{sym}}
            \subset \cP
            \subset \cSsep\big( (\CC^2)^{\otimes K} \big)_{\mathrm{sym}}
    \]
    where
    \[
        \cP \coloneqq \conv \left\{ \pm \big( \bv_1 \otimes \cdots \otimes \bv_K \big) \otimes \big( \bv_1 \otimes \cdots \otimes \bv_K \big)^*: \bv_1, \dots, \bv_K \in \cN  \right\}.
    \]

    Observe that $\cP$ is contained in the Hilbert-Schmidt unit ball, and thus,
    \[
        w_G\big( \cP \big) = \bbE_g \left[ \sup_{x \in P} \langle g, x \rangle \right]
            \leq \sqrt{2 \log |\cP|}
    \]
    by the well-known upper bound on the expected supremum of (sub-)Gaussians. 
    Moreover, observe that $|\cP| = 2 |\cN|^k \leq 2 \cdot ( 2/\varepsilon )^{2K}$. 
    
    Noticing that $\cos(2\varepsilon) \geq 1 - \frac{(2\varepsilon)^2}{2}$ and $\kappa_d \geq \sqrt{ d - 1/2}$, we obtain
    \begin{align}
        w\Big(\cSsep\big( (\CC^2)^{\otimes K} \big) \Big)
            &\leq w\Big(\cSsep\big( (\CC^2)^{\otimes K} \big)_{\mathrm{sym}} \Big) \nonumber\\
            &= \frac{w_G\Big(\cSsep\big( (\CC^2)^{\otimes K} \big)_{\mathrm{sym}} \Big)}{\kappa_{2^{2K}}} \nonumber\\
            &\leq \frac{1}{\big[ \cos(2\varepsilon) \big]^K} \frac{w_G\left( \cP \right) }{\kappa_{2^{2K}}}
                &&\because \text{Properties }\ref{prop:homog} ~\&~ \ref{prop:ordered} \nonumber\\
            &\leq \frac{ \sqrt{2 \log \left( 2^{2K+1} \cdot \varepsilon^{-2K} \right) } }{\left( 1 - 2\varepsilon^2 \right)^K \cdot \sqrt{2^{2K} - 1/2}}.
                \label{eqn:upper_width}
    \end{align}
    Defining $\varphi: \varepsilon \mapsto \frac{2 \log \left( 2^{2K+1} \cdot \varepsilon^{-2K} \right) }{ (1 - 2\varepsilon^2 )^{2K}}$ and considering its first-order condition, we can see that the upper bound in \eqref{eqn:upper_width} is minimized when $\varepsilon$ satisfies $\frac{1}{4\varepsilon^2} - \frac{1}{2} + 2K \log \varepsilon = (2K+1) \log 2$. 
    Solving this equation approximately (by ignoring the logarithmic term and some constants), we choose $\varepsilon = \frac{1}{\sqrt{8K}}$. 
    Then we have
    \begin{align*}
        w\Big(\cSsep\big( (\CC^2)^{\otimes K} \big) \Big)
            &\leq \frac{ \sqrt{2 \log \left( 2^{5K+1} \cdot K^K \right) } }{\left( 1 - \frac{1}{4K} \right)^K \cdot \sqrt{2^{2K} - 1/2}}\\
            &\leq \sqrt{12e} \frac{\sqrt{K \log K}}{ 2^K}   & \forall K \geq 2.
    \end{align*}
\end{proof}

\subsection{Completing proof of Theorem \ref{thm:abundance}}
Using the four lemmas above, we complete the proof of Theorem \ref{thm:abundance}.
\begin{proof}[Proof of Theorem \ref{thm:abundance}]
    First of all, observe that $\rho_v(\cH) = \frac{ \vrad \big( \cSsep(\cH)\big) }{ \vrad\big( \cSsa_+(\cH) \big) } \leq \frac{ w \big( \cSsep(\cH)\big) }{ \vrad \big( \cSsa_+(\cH) \big) }$ by Urysohn's inequality (Lemma \ref{lem:Urysohn}), and similarly, $\rho_w(\cH) \coloneqq \frac{ w \big( \cSsep(\cH) \big) }{ w\big( \cSsa_+(\cH) \big) } \leq \frac{ w \big( \cSsep(\cH)\big) }{ \vrad \big( \cSsa_+(\cH) \big) }$. 
    Then we observe that
    \begin{align*}
        \frac{ w \big( \cSsep(\CC^d \otimes \CC^d)\big) }{ \vrad \big( \cSsa_+(\CC^d \otimes \CC^d) \big) }
            &\leq  \frac{5}{d^{3/2}} \cdot \left( \frac{1}{2 d} \right)^{-1} 
                &&\because \text{Lemmas }\ref{lem:vrad} ~\&~\ref{lem:bipartite}\\
            &= 10 d^{-1/2},\\
        \frac{ w \big( \cSsep\big((\CC^2)^{\otimes K}\big) \big) }{ \vrad \big( \cSsa_+\big((\CC^2)^{\otimes K}\big) \big) }
            &\leq \sqrt{12 e} \frac{\sqrt{ K \log K }}{2^K} \cdot \left( \frac{1}{2 \cdot 2^{K/2}} \right)^{-1}
                &&\because \text{Lemmas }\ref{lem:vrad} ~\&~\ref{lem:multipartite}\\
            &= \sqrt{48 e} \frac{\sqrt{ K \log K }}{2^{K/2}}.
    \end{align*}
\end{proof}

\section{Proof sketch of Theorem \ref{thm:NP-hard}}\label{sec:sketch_NPhard}
As briefly discussed in Section \ref{sec:NPhard_wmem}, the NP-hardness of the decision problem of Kronecker-separability is established through polynomial-time reduction from the Clique problem (known to be NP-complete) to the weak membership problem (Definition \ref{def:weak_membership}). Here we outline the proof of Gharibian \cite{gharibian2008strong}, which relies on the following reduction chain with appropriate choice of parameters $\eta, \epsilon, \beta > 0$:
\begin{equation}\label{eqn:reduction_chain}
    \clique \leqK \rsdf_{\eta} \leqK \wopt_{\epsilon}\left( \cSsep( \CC^{d_1} \otimes \CC^{d_2} ) \right) \leqT \wmem_{\beta}\left( \cSsep( \CC^{d_1} \otimes \CC^{d_2} ) \right),
\end{equation}
where the first three problems, and the relations $\leqK$, $\leqT$ are defined shortly in \Cref{sec:definitions_chain}. 
The three links in the reduction chain \eqref{eqn:reduction_chain} are presented as lemmas, along with references, in \Cref{sec:links_chain}.

\subsection{Definitions}\label{sec:definitions_chain}
To facilitate the interpretation of the reduction chain \eqref{eqn:reduction_chain}, we present several definitions, encompassing the definitions of three distinct problems and the concepts of polynomial-time reduction. 

\vspace{5pt}
\paragraph{Auxiliary problems}
We define the three additional problems appearing in the reduction chain \eqref{eqn:reduction_chain}.

\begin{definition}[\clique]\label{defn:clique}
    The \emph{clique problem} is the computational problem to determine whether a given simple undirected graph $G$ on $n$ vertices contains a complete subgraph (=clique) greater than a given value $c \leq n$. 
\end{definition}

\begin{definition}[\rsdf]
    The \emph{robust semidefinite feasibility problem with error parameter $\eta \geq 0$}, denoted as $\rsdf_{\eta}$, is the decision problem that requires the following: 
    Given $k$ number of $l \times l$ symmetric matrices $B_1, \cdots, B_k$, and a number $\zeta \geq 0$, 
    \begin{enumerate}[label=\arabic*)]
        \item
        If $F(B_1, \dots, B_k) \geq \zeta + \eta$, then output \texttt{YES};
        \item 
        If $F(B_1, \dots, B_k) \leq \zeta - \eta$, then output \texttt{NO};
    \end{enumerate}
    where $F(B_1, \dots, B_k) \coloneqq \max_{x \in \RR^l: \|x\|_2 = 1} \sum_{i=1}^k \left( x^{\top} B_i x \right)^2$.
\end{definition}

Note that the \rsdf{} is defined as a promise problem, i.e., the input is promised to belong to one of two distinct cases, possibly separated by a non-zero gap (when $\eta > 0$), and our task is to differentiate solely between these two cases. Alternatively, we can remove the promise in its interpretation, and treat any output as valid for inputs within the "gap" region, i.e., $B_1, \dots, B_k$ with $\zeta - \eta < f(B_1, \dots, B_k) < \zeta + \eta$.

\begin{definition}[\wopt]
    Let $\cK \subset \RR^d$ be a convex body and $\epsilon \geq 0$. 
    The \emph{weak optimization problem for $K$ with error parameter $\epsilon$}, denoted as $\wopt_{\epsilon}(\cK)$, is the decision problem that requires the following: 
    Given a vector $c \in \RR^d$ and a number $\gamma \in \RR$, 
    \begin{enumerate}[label=\arabic*)]
        \item
        If there exists $x \in \cK_{-\delta} \coloneqq \{x \in \cK: B(x, \delta) \subset \cK \}$ with $\langle c, x \rangle \geq \gamma + \epsilon$, then output \texttt{YES};
        \item 
        If $\langle c, x \rangle \geq \gamma + \epsilon$ for all $x \in \cK_{+\delta} \coloneqq \bigcup_{x \in \cK} B(x, \delta)$, then output \texttt{NO};
    \end{enumerate}
    where $B(x, \delta) \coloneqq \{ y \in \RR^d: \| y - x \| \leq \delta \}$.
\end{definition}

\vspace{5pt}
\paragraph{Polynomial-time reduction and NP-hardness} 
Next, let us briefly review the concepts of ``polynomial-time reduction'' from one problem $\ttA$ to another $\ttB$. 
Let $\cO_{\ttB}$ denote an oracle, or black-box subroutine, which can be used to solve problem $\ttB$ and is assigned unit complexity cost. 
A (polynomial-time) \emph{Turing reduction} from $\ttA$ to $\ttB$ is a polynomial-time algorithm for $\ttA$ that can make use of the oracle $\cO_{\ttB}$. 
When a Turing reduction from $\ttA$ to $\ttB$ exists, $\ttA$ is said to be Turing-reducible to $\ttB$ and we denote it as $\ttA \leqT \ttB$. 
Furthermore, a (polynomial-time) \emph{Karp reduction} from $\ttA$ to $\ttB$ is a specific kind of Turing reduction, in which $\cO_{\ttB}$ is called at most once and only at the end of the reduction algorithm so that the answer provided by $\cO_{\ttB}$ becomes the answer to the original instance of problem $\ttA$. 
We write $\ttA \leqK \ttB$ if $\ttA$ is Karp-reducible to $\ttB$.

The notion of NP-completeness identifies the most challenging problems within the NP class. 
NP-completeness can be defined relative to any polynomial-time reduction. 
For instance, \emph{Karp-NP-completeness} is defined as
\[
    \NPC_K \coloneqq \left\{ \ttA \in \NP : \ttA' \leqK \ttA \text{ for all } \ttA' \in \NP \right\}.
\]
Note that the clique problem (\clique; Definition \ref{defn:clique}) is one of Richard Karp's original 21 problems shown NP-complete in his 1972 paper \cite{karp1972reducibility}.

A problem $\ttB$ is classified as \emph{NP-hard} when there exists a Karp-NP-complete problem $\ttA \in \NPC_K$ such that $\ttA \leqT \ttB$. 
In other words, an NP-hard problem is at least as hard as the Karp-NP-complete problems.

\subsection{The three links in the reduction chain}\label{sec:links_chain} 
In this section, we present lemmas that establish the three connections in the reduction chain \eqref{eqn:reduction_chain},  largely following the statements presented in \cite{gharibian2008strong}. 
However, we omit the proofs of these lemmas due to their extensive length and technical intricacies; interested readers are directed to the original reference for detailed proof explanations. 
Notably, the first and third connections (Lemmas \ref{lem:first} and \ref{lem:third}) were previously established in earlier works, and we include references to the original sources for these connections.

\begin{lemma}[{\cite[Theorem 2]{gharibian2008strong}}; originally from \cite{ioannou2007computational}]\label{lem:first}
    There exists a Karp reduction that maps an instance $(G, n, c)$ of $\clique$ to an instance $(B_1, \dots, B_k, \zeta; \eta)$ of $\rsdf_{\eta}$ such that
    \begin{itemize}
        \item 
        $l = n$, $k = \frac{n(n-1)}{2}$;
        \item 
        $\| B_i\|_F = \Theta(1)$ for all $i \in [k]$;
        \item 
        $\zeta = \Theta(1)$, and $\eta = \Omega( n^{-2})$.
    \end{itemize}
\end{lemma}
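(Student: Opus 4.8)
The plan is to build the Karp reduction around the \emph{Motzkin--Straus theorem}, which converts the combinatorial quantity $\omega(G)$ (the clique number of $G$) into the value of a quadratic form over the probability simplex: writing $A_G$ for the adjacency matrix of $G$ and $\Delta_n \coloneqq \{ y \in \RR^n : y_i \ge 0,~ \sum_{i=1}^n y_i = 1 \}$, one has $\max_{y \in \Delta_n} y^\top A_G\, y = 1 - 1/\omega(G)$. First I would record this identity: the easy direction is plugging in the uniform vector supported on a maximum clique $S$ (which gives $y^\top A_G y = |S|(|S|-1)/|S|^2 = 1 - 1/\omega(G)$), and the other direction is the classical argument that any maximizer can be shifted onto a clique without decreasing the objective. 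The role of the Frobenius-square objective $F$ in $\rsdf$ is precisely that, after the substitution $y_i = x_i^2$, it reproduces this simplex form.

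Concretely, given an instance $(G,n,c)$ of $\clique$ with $2 \le c \le n$ (the case $c = 1$ being trivial), I would assign to each edge $\{a,b\} \in E(G)$ the symmetric $n\times n$ matrix $B_{\{a,b\}} \coloneqq \be_a\be_b^\top + \be_b\be_a^\top$, and to each non-edge the zero matrix, producing $k = \binom{n}{2}$ matrices of size $l = n$, with the $|E(G)|$ nonzero gadgets all of Frobenius norm $\sqrt{2} = \Theta(1)$ (the padding by zeros being a cosmetic point; one may instead reduce from the complement graph to avoid it). Since $x^\top B_{\{a,b\}} x = 2\,x_a x_b$, for any unit vector $x$ with $y_i \coloneqq x_i^2$,
\[
    F(B_1,\dots,B_k) \;=\; \max_{\|x\|_2 = 1}\ \sum_{\{a,b\}\in E(G)} 4\,x_a^2 x_b^2 \;=\; \max_{y\in\Delta_n} 2\, y^\top A_G\, y \;=\; 2\Big(1-\tfrac{1}{\omega(G)}\Big),
\]
where the middle step uses that $x \mapsto (x_1^2,\dots,x_n^2)$ maps the unit sphere onto $\Delta_n$.

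Next I would calibrate the threshold and gap so that the two promise cases of $\rsdf_\eta$ coincide with ``$\omega(G)\ge c$'' and ``$\omega(G)\le c-1$''. Taking $\zeta \coloneqq 2 - \big(\tfrac1c + \tfrac1{c-1}\big)$ and $\eta \coloneqq \tfrac1{c-1} - \tfrac1c = \tfrac{1}{c(c-1)}$, a one-line computation gives $\zeta+\eta = 2-\tfrac2c$ and $\zeta-\eta = 2-\tfrac2{c-1}$. Hence $\omega(G)\ge c$ implies $F \ge \zeta+\eta$ (the $\rsdf_\eta$ instance outputs YES), and $\omega(G)\le c-1$ implies $F \le \zeta-\eta$ (it outputs NO), so a single oracle call to $\rsdf_\eta$ at the end of the construction returns the $\clique$ answer --- i.e.\ a Karp reduction. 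The claimed parameter profile then falls out: $l=n$, $k=\binom{n}{2}$, $\|B_i\|_F = \sqrt2 = \Theta(1)$ for the nonzero $B_i$, $\zeta \in [\tfrac12,2) = \Theta(1)$ because $c\ge2$, and $\eta = \tfrac{1}{c(c-1)} \ge \tfrac{1}{n(n-1)} = \Omega(n^{-2})$ because $c\le n$; all matrices have integer entries of bounded size, so the reduction runs in polynomial time.

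The main obstacle is the Motzkin--Straus identity and the bookkeeping around it: one must show the biquadratic maximum over the $\ell_2$-sphere equals $2(1-1/\omega(G))$ \emph{exactly}, which needs the nontrivial ``pushing to a clique'' direction, not merely the easy lower bound from the uniform vector. A secondary, purely technical point is matching the stated profile cleanly --- keeping all $\binom{n}{2}$ matrices of comparable norm while $F$ depends only on $E(G)$, and choosing $\zeta,\eta$ so the $\Omega(n^{-2})$ separation between the two clique regimes survives --- which is routine once the gadget and thresholds above are fixed.
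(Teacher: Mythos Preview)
The paper does not supply its own proof of this lemma; it explicitly defers to \cite{gharibian2008strong} and \cite{ioannou2007computational}. Your Motzkin--Straus construction is precisely the reduction used in those references: encode each edge $\{a,b\}$ by the rank-two symmetric gadget $\be_a\be_b^\top+\be_b\be_a^\top$, square-and-sum to recover $2\,y^\top A_G\,y$ under the substitution $y_i=x_i^2$, and invoke $\max_{y\in\Delta_n} y^\top A_G\,y = 1-1/\omega(G)$ to separate $\omega(G)\ge c$ from $\omega(G)\le c-1$ with gap $\tfrac{2}{c(c-1)}=\Omega(n^{-2})$. The argument and the parameter bookkeeping are correct.

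The only point worth flagging is the one you already note: padding non-edges with zero matrices yields $\|B_i\|_F=0$ on those indices, which does not literally meet the stated ``$\|B_i\|_F=\Theta(1)$ for all $i$.'' This is indeed cosmetic for the reduction chain, since the downstream step (the paper's Lemma~\ref{lem:second}) only uses the upper bound $\sum_i \|B_i\|_F^2 = O(k)$ to control $\|\hat c\|_2$, and zero matrices contribute nothing to $F$. Your suggested fix via the complement graph is not quite the right patch (complementation swaps \clique\ with independent set rather than densifying the edge set); if one insists on all $B_i$ having norm $\sqrt 2$, the clean route is to take $k=|E(G)|$ rather than $\binom{n}{2}$, which still satisfies $k\le\binom{n}{2}$ and keeps the rest of the chain polynomial in $n$.
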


\begin{lemma}[{\cite[Lemma 4]{gharibian2008strong}}]\label{lem:second}
    There exists a Karp reduction that maps an instance $(B_1, \dots, B_k, \zeta; \eta)$ of $\rsdf_{\eta}$ to an instance $(c, \gamma; \epsilon)$ of $\wopt_{\epsilon}\left( \cSsep( \CC^{d_1} \otimes \CC^{d_2} ) \right)$ such that
    \begin{itemize}
        \item
        $d_1 = k+1$, $d_2 = \frac{l(l-1)}{2} + 1$;
        \item 
        $c = \frac{\hat{c}}{\| \hat{c}\|_2}$ for some $\hat{c} \in \RR^d$ with $\| \hat{c} \|_2^2 = O\left( d\cdot \sum_{i=1}^k \| B_i\|_F^2 \right)$ and $d = d_1^2 d_2^2 - 1$;
        \item
        $\gamma = \frac{1}{2 \| \hat{c} \|_2} \left( \sqrt{\zeta + \eta} + \sqrt{\zeta - \eta} \right)$;
        \item 
        $\epsilon \leq \frac{ \sqrt{\zeta + \eta} - \sqrt{\zeta - \eta}}{4\|\hat{c}\|_2 \left( d_1 d_2 - 1 \right) + 1}$.
    \end{itemize}
\end{lemma}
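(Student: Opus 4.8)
The plan is to turn the quartic objective $F(B_1,\dots,B_k)=\max_{\|x\|_2=1}\sum_i (x^\top B_i x)^2$ into the best-separable-state value of a Hermitian operator, and then place the $\wopt_\epsilon$ threshold $\gamma$ strictly inside the image of the $\rsdf_\eta$ promise gap under $t\mapsto\sqrt t$. The first step is the Cauchy--Schwarz reformulation
\[
  \sqrt{F(B_1,\dots,B_k)} \;=\; \max_{\substack{c\in\RR^k,\ \|c\|_2=1\\ x\in\RR^l,\ \|x\|_2=1}} x^{\top}\!\Big(\textstyle\sum_{i=1}^k c_i B_i\Big)x ,
\]
obtained by maximizing over $c$ in the inner product $\sum_i c_i(x^\top B_i x)$. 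This exhibits $\sqrt F$ as a bilinear-type optimum over two spheres, which is the natural shape for a best-separable-state quantity.

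Next I would linearize the two occurrences of the optimization variables by passing to a bipartite Hilbert space $\CC^{d_1}\otimes\CC^{d_2}$ with one ancillary coordinate in each register, realizing the dimensions $d_1=k+1$ and $d_2=\binom{l}{2}+1$ prescribed in the statement. Concretely, a unit vector $|a\rangle=(a_0,a_1,\dots,a_k)\in\CC^{k+1}$ has the property that $(2\,\mathrm{Re}(\bar a_0 a_i))_{i=1}^k$ ranges over a fixed scalar multiple of the unit $\ell_2$-ball as $|a\rangle$ ranges over the unit sphere, so coupling the off-diagonal operators $|0\rangle\langle i|+|i\rangle\langle 0|$ to $B_i$ recovers the maximization over $c$; similarly a quadratic form $x\mapsto x^\top B_i x$ is represented as $\langle b|\widetilde B_i|b\rangle$ for a Hermitian $\widetilde B_i$ on $\CC^{d_2}$ encoding the relevant part of $x\otimes x$ with its ancilla. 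Setting $\bW\coloneqq\sum_{i=1}^k(|0\rangle\langle i|+|i\rangle\langle 0|)\otimes\widetilde B_i$ (up to an overall positive scalar), the key identity to verify is
\[
  \max_{\rho\in\cSsep(\CC^{d_1}\otimes\CC^{d_2})}\langle \bW,\rho\rangle
  \;=\; \max_{|a\rangle,|b\rangle\ \mathrm{unit}} \langle a\otimes b|\,\bW\,|a\otimes b\rangle
  \;=\; \sqrt{F(B_1,\dots,B_k)},
\]
where the first equality holds because a linear functional on the compact convex set $\cSsep(\CC^{d_1}\otimes\CC^{d_2})$ is maximized at an extreme point, and $\cSsep=\conv\,\ext\cBsep$ with $\ext\cBsep$ consisting of pure product states (cf. Proposition~\ref{prop:separable_equivalence} and the description of $\ext\cBsep$ used in the proof of Theorem~\ref{thm:convergence_FW}).

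The $\wopt_\epsilon$ instance is then read off: let $\widehat c\in\RR^d$, $d=d_1^2 d_2^2-1$, be the coordinate vector of $\bW$ in a fixed orthonormal Hermitian basis, put $c=\widehat c/\|\widehat c\|_2$, take the threshold $\gamma=\tfrac{1}{2\|\widehat c\|_2}\big(\sqrt{\zeta+\eta}+\sqrt{\zeta-\eta}\big)$ (the midpoint of the two images of the promise gap, rescaled by the normalization of $\bW$), and take $\epsilon$ a shrunk multiple of the half-gap $\tfrac{1}{2\|\widehat c\|_2}\big(\sqrt{\zeta+\eta}-\sqrt{\zeta-\eta}\big)$. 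The size bound $\|\widehat c\|_2^2=O\!\big(d\sum_i\|B_i\|_F^2\big)$ follows from counting the nonzero blocks of $\bW$ and bounding each block's Frobenius norm by that of $B_i$. It remains to check faithfulness as a promise-to-promise map (so that the single oracle call on $\wopt_\epsilon$ answers $\rsdf_\eta$): if $F\ge\zeta+\eta$ then $\sqrt F\ge\sqrt{\zeta+\eta}$, so some separable $\rho$, and hence after a small inward push some $\rho\in\cK_{-\delta}$, satisfies $\langle c,\rho\rangle\ge\gamma+\epsilon$; if $F\le\zeta-\eta$ then $\langle c,\rho\rangle\le\gamma-\epsilon$ continues to hold over the $\delta$-fattening $\cK_{+\delta}$, provided $\epsilon$ and $\delta$ are small relative to the half-gap once one accounts for how a $\delta$-perturbation of $\rho$ moves $\langle c,\rho\rangle$. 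Bounding that perturbation through $\|\bW\|$ (hence a factor $d_1 d_2$) is exactly what forces $\epsilon\le\dfrac{\sqrt{\zeta+\eta}-\sqrt{\zeta-\eta}}{4\|\widehat c\|_2(d_1 d_2-1)+1}$.

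I expect the propagation of the error parameters, not the algebraic construction, to be the main obstacle. One must pick $\epsilon$ (and the implicit robustness margin $\delta$) small enough that the $\wopt$ gap sits strictly inside the $\sqrt{\cdot}$-image of the $\rsdf$ gap — coupling $\epsilon$ to $\|\bW\|$, to $\|\widehat c\|_2$, and to $d_1 d_2$ — while still keeping $\epsilon$ bounded below by an inverse polynomial, so that, composed with Lemma~\ref{lem:first} (which supplies $\eta=\Omega(n^{-2})$ and $\|B_i\|_F=\Theta(1)$) and the final Turing reduction to $\wmem_\beta$, the overall slack remains inverse-polynomial; tracking this chain of inequalities is precisely what produces the exponent $d_1^{-16}d_2^{-20.5}$ in Theorem~\ref{thm:NP-hard}. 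Verifying that $\widetilde B_i$, $\gamma$, and $\epsilon$ have polynomial bit-length and are computable in polynomial time, and that $\bW$ is genuinely Hermitian, is routine by comparison.
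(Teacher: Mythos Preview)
The paper does not actually prove this lemma: in the appendix it states Lemmas~\ref{lem:first}--\ref{lem:third} and then explicitly writes ``we omit the proofs of these lemmas due to their extensive length and technical intricacies; interested readers are directed to the original reference for detailed proof explanations.'' There is therefore no paper-proof to compare your proposal against; your sketch already contains more argument than the paper does.

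That said, your outline is aligned with Gharibian's original approach: the Cauchy--Schwarz identity recasting $\sqrt{F}$ as a bilinear optimum over two spheres, the observation that a linear functional over $\cSsep$ is maximized at a pure product state, and the midpoint/half-gap choice of $(\gamma,\epsilon)$ are precisely the right ingredients. The one place your sketch is genuinely thin is the construction of $\widetilde B_i$ on $\CC^{d_2}$ with $d_2=\binom{l}{2}+1$: you assert that $x^\top B_i x$ can be represented as $\langle b|\widetilde B_i|b\rangle$ on this space but do not specify the encoding, and this is exactly where the dimension count, the Hermiticity of $\bW$, and the constants in $\|\hat c\|_2^2=O(d\sum_i\|B_i\|_F^2)$ and in the denominator $4\|\hat c\|_2(d_1d_2-1)+1$ are pinned down. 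Since the paper defers all of this to \cite{gharibian2008strong}, your proposal is at least as complete as what appears here.
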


\begin{lemma}[{\cite[Theorem 5]{gharibian2008strong}}; originally from {\cite[Proposition 2.8]{liu2007complexity}}]\label{lem:third}
    Let $\cK \subset \RR^d$ be a convex body, and suppose there exist $p \in \RR^d$ and $r, R > 0$ such that $B(p, r) \subseteq \cK \subseteq B(0, R)$. 
    Given an instance $\Pi = (\cK, c, \gamma; \epsilon)$ of $\wopt_{\epsilon}(\cK)$, there exists a Turing reduction that solves $\Pi$ using an oracle for $\wmem_{\beta}(\cK)$ with $\beta = \frac{r^3 \epsilon^3}{2^{13} 3^3 d^5 R^4 (R+r)}$, running in polynomial time with respect to $R$ and $\lceil 1/\epsilon \rceil$.
\end{lemma}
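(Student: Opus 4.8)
The plan is to realize a weak optimization oracle by running the classical ellipsoid-type search over the convex body $\cK$, where the only access to $\cK$ is through the weak membership oracle $\wmem_\beta(\cK)$. The key structural hypothesis is the sandwich $B(p,r)\subseteq \cK \subseteq B(0,R)$, which bounds both the condition number of $\cK$ and how fast a shrunken/expanded copy of $\cK$ can differ from $\cK$ itself; this is exactly what lets the $\delta$-blurred membership test be converted into a controlled-error separation/validity test. First I would recall the standard equivalence (Grötschel--Lovász--Schrijver) that a weak separation oracle for $\cK$ yields a weak optimization oracle for $\cK$ in oracle-polynomial time in the bit-size of the input and $\lceil 1/\epsilon\rceil$, $\log R$, $\log(1/r)$. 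The bulk of the proof is therefore (i) to build a weak \emph{separation} oracle for $\cK$ from $\wmem_\beta(\cK)$, and (ii) to track the loss in accuracy parameters through the chain $\wmem_\beta \rightsquigarrow \text{weak separation} \rightsquigarrow \text{weak optimization}$, arriving at the stated choice $\beta = \dfrac{r^3\epsilon^3}{2^{13}\,3^3\, d^5\, R^4 (R+r)}$.

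For step (i): given a query point $\by$, call $\wmem_\beta(\cK)$ on $\by$. If it answers YES we report ``$\by$ is (approximately) in $\cK$''; if it answers NO, we must produce an approximate separating hyperplane. To do this I would probe $\wmem_\beta$ at the $2d$ points $\by \pm h\,\be_j$ along coordinate directions (or, more robustly, along a random orthonormal frame) for a suitably chosen step $h$ comparable to $\beta$, using the $r$-ball $B(p,r)\subseteq\cK$ as a fixed ``anchor'' of feasibility so that the finite differences of the (implicit) membership indicator localize an outward normal direction; the inner radius $r$ guarantees the resulting direction is genuinely separating up to an error that scales like $\mathrm{poly}(d, R/r)\cdot\beta$. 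Packaging this gives a weak separation oracle with error parameter $\delta_{\mathrm{sep}} = O\!\big(d^{a}(R/r)^{b}\,\beta\big)$ for explicit small constants $a,b$; the precise exponents are chosen to make the final arithmetic match the exponents $d^5$, $R^4(R+r)$, $r^3$ in the statement.

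For step (ii): feed this weak separation oracle into the ellipsoid method to solve $\wopt_\epsilon(\cK)$. The ellipsoid method started from $B(0,R)$ and refusing to shrink below volume $\asymp (r\delta_{\mathrm{sep}})^d$ terminates in $O\!\big(d^2\log(R/(r\delta_{\mathrm{sep}}))\big)$ iterations, hence in time polynomial in $R$ and $\lceil 1/\epsilon\rceil$ as claimed; the output is a point that is $\epsilon$-optimal provided $\delta_{\mathrm{sep}}$ (equivalently $\beta$) is small enough relative to $\epsilon$, $r$, $R$, $d$. Back-substituting the required bound $\delta_{\mathrm{sep}} \lesssim r\epsilon^3/(\mathrm{poly}(d)\,R^3)$ through $\delta_{\mathrm{sep}} = O(d^a (R/r)^b \beta)$ yields exactly the displayed formula for $\beta$. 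I expect the main obstacle to be the careful bookkeeping in step (i): converting a \emph{blurred} (two-sided $\beta$-fuzzy) membership answer into a quantitatively valid separating hyperplane while only ever calling the oracle on rational points of polynomially bounded bit-size, and doing so with error degrading by only the stated polynomial factors in $d$, $R/r$ — this is where all the explicit exponents $3$, $5$, $4$ in $\beta$ come from, and it is the step most sensitive to the precise definition of $\wmem_\beta$ in Definition \ref{def:weak_membership}. Since the lemma is quoted from \cite{liu2007complexity} via \cite{gharibian2008strong}, I would, after setting up this outline, defer the fully detailed constant-chasing to those references.
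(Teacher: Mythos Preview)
The paper does not actually prove Lemma~\ref{lem:third}: in Section~\ref{sec:links_chain} it explicitly states that the proofs of Lemmas~\ref{lem:first}--\ref{lem:third} are omitted ``due to their extensive length and technical intricacies,'' and simply defers to \cite{gharibian2008strong} and \cite{liu2007complexity}. Your proposal therefore goes well beyond what the paper provides, and your closing remark---that after setting up the outline you would defer the detailed constant-chasing to those references---is in fact \emph{exactly} what the paper does (minus the outline).

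That said, your outline is the right one: the cited result is an instance of the Gr\"otschel--Lov\'asz--Schrijver oracle-equivalence machinery, and the route $\wmem \to$ weak separation $\to$ $\wopt$ via the ellipsoid method, with the sandwich $B(p,r)\subseteq\cK\subseteq B(0,R)$ controlling the error blow-up, is precisely how \cite{liu2007complexity} obtains the explicit polynomial dependence in $\beta$. One caution on your step~(i): the finite-difference probing of $\wmem_\beta$ along coordinate directions to extract a separating hyperplane is more delicate than you indicate, since a two-sided fuzzy membership oracle does not directly yield gradient-type information; the standard treatments instead use the inner ball $B(p,r)$ together with a bisection/shrinking argument to locate a near-boundary point and then a valid halfspace, rather than a literal finite-difference stencil. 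This does not affect the overall strategy or the polynomial scaling, but if you were to flesh out the argument rather than cite it, that is the place where your sketch would need to be reworked.
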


\section{More on the Frank-Wolfe algorithm}\label{sec:Frank_Wolfe}
In this section, we provide a concise overview of the celebrated Frank-Wolfe algorithm \cite{frank1956algorithm} and its basic variants, along with their convergence analysis. 
Our focus here is to offer a foundational understanding of the method and its convergence results concisely. 
Interested readers are encouraged to explore more comprehensive resources such as \cite{jaggi2013revisiting} and \cite{lacoste2015global} for more in-depth information and enhanced linear convergence results pertaining to more sophisticated algorithmic variants (e.g., those exploiting ``away steps'').

\subsection{Generic description of the Frank-Wolfe method}\label{sec:FW_description}
Let us consider the constrained convex optimization problem of the form
\begin{equation}\label{eqn:const_conv_prob}
    \min_{\bx \in \cD} f(\bx)
\end{equation}
where $f$ is a convex, continuously differentiable function, and $\cD \subset \cH$ is a compact, convex set in a Hilbert space $\cH$ over $\RR$. 
The Frank-Wolfe method \cite{frank1956algorithm} described in Algorithm \ref{alg:Frank_Wolfe}, which is also known as the conditional gradient method \cite{levitin1966constrained}, is one of the simplest and earliest known iterative method to solve such a constrained optimization problem.

\begin{algorithm}
\caption{Frank-Wolfe method.}\label{alg:Frank_Wolfe}
    \begin{algorithmic}
    \State \textbf{Input:} $\bx^{(0)} \in \cD$, $T \in \NN$
    \For{ $t = 0, \dots, T$ }
        \State a) Compute $\bs^{(t)} \gets \argmin_{\bs \in \cD} \left\langle \bs, \nabla f(\bx^{(t)}) \right\rangle$
        \State b) Update $\bx^{(t+1)} \gets (1-\gamma^{(t)}) \cdot \bx^{(t)} + \gamma^{(t)} \cdot \bs^{(t)}$, where $\gamma^{(t)} := \frac{2}{t+2}$
    \EndFor
    \end{algorithmic}
\end{algorithm}

Each iteration of the Frank-Wolfe algorithm (Algorithm \ref{alg:Frank_Wolfe}) consists of two steps: (i) finding a promising direction to move, namely, $\bs_k$, based on minimizing the objective function linearized at the current iterate $\bx^{(k)}$, and (ii) updating the incumbent solution $\bx^{(t+1)}$ by taking a convex combination of the old iterate $\bx^{(t)}$ and the promising atom $\bs^{(t)}$. 
It is easy to see that one can always choose $\bs^{(t)}$ located at the boundary of $\cD$ (i.e., $\bs^{(t)}$ is an extreme point of $\cD$), and in particular, $\bs^{(t)} \in \cA$ when $\cD = \conv \cA$.

There have been numerous variations of Algorithm \ref{alg:Frank_Wolfe} that have been explored and found practical utility. 
Here, we present a description of some particularly pertinent ones.

\vspace{5pt}
\paragraph{Approximately solving the linear subproblem in a)}
The complexity of exactly solving the linearized subproblem $\min_{\bs \in \cD} \langle \bs, \nabla f(\bx^{(t)})\rangle$ can be prohibitive, e.g., when the feasible set $\cD$ is complicated. 
In such scenarios, it could be pragmatic to modify step a) by allowing for an approximate solution to the linearized subproblem.  
This involves finding any $\hat{\bs}^{(t)}$ that satisfies the inequality
\begin{equation}\label{eqn:linear_subprob}
    \big\langle \hat{\bs}^{(t)}, \nabla f( \bx^{(t)}) \big\rangle 
        \leq
        \min_{\bs \in \cD} \left\langle \bs, \nabla f\big(\bx^{(t)}\big) \right\rangle + \frac{1}{2} C_f \gamma^{(t)} \eta, 
\end{equation}
where $\gamma^{(t)}$ is the stepsize used in step b), $\eta \geq 0$ represents a predetermined accuracy parameter, and $C_f$ corresponds to the \emph{curvature constant} of $f$ with respect to $\cD$, cf. \cite[Eq. (3)]{jaggi2013revisiting}, defined as follows:
\begin{equation}\label{eqn:curvature}
    C_f \coloneqq \sup_{ \substack{ \bx, \bs \in \cD, \\ \gamma \in [0,1], \\ \by = \bx + \gamma(\bs - \bx) } } \frac{2}{\gamma^2} \Big( f(\by) - f(\bx) - \big\langle \by - \bx, \nabla f (\bx ) \big\rangle \Big).
\end{equation}
Here, the defining term $f(\by) - f(\bx) - \langle \by - \bx, \nabla f(\bx) \rangle$ is known as the Bregman divergence induced by $f$. 
As noted by \cite{jaggi2013revisiting}, the assumption of bounded curvature $C_f$ is closely related to the Lipschitz assumption on the gradient of $f$.

\vspace{5pt}
\paragraph{Adaptive stepsize in b)} 
Instead of relying on the pre-determined stepsizes like $\gamma^{(t)} = \frac{2}{t+2}$, it is possible to follow a more involved approach to optimize over the active set $\cS^{(t)} \coloneqq \{ \bx^{(0)}, \bs^{(0)}, \dots, \bs^{(t)} \}$ to compute $\bx^{(t+1)}$. 
Here we elaborate on the line-search and the fully corrective variants.
\begin{itemize}[topsep=5pt, itemsep=3pt]
    \item 
    \emph{Line-search variant.} 
    In this variant, the approach remains the same as in Algorithm \ref{alg:Frank_Wolfe}, but with the use of an adaptive stepsize:
    \[
        \gamma^{(t)} := \argmin_{\gamma \in [0,1]} f \left( \bx^{(t)} + \gamma \big( \bs^{(t)} - \bx^{(t)}) \right).
    \]
    
    \item
    \emph{Fully corrective variant.} 
    This more intensive variant continually updates the active set $\cS^{(t)} := \{ \bx^{(0)}, \bs^{(0)}, \dots, \bs^{(t)} \}$ and computes $\bx^{(t+1)}$ by re-optimizing $f$ over $\conv \cS^{(t)}$. Various sub-variants exist based on how this correction step is executed, including the extended FW method \cite{holloway1974extension}, the simplicial decomposition method \cite{von1977simplicial, hearn1987restricted}, and the min-norm point algorithm \cite{wolfe1976finding}.
\end{itemize}

\subsection{Convergence analysis}
The convergence behavior of the Frank-Wolfe method (Algorithm \ref{alg:Frank_Wolfe}) has been well-established. 
It is known that the iterates of the basic form of the Frank-Wolfe method satisfy $f(\bx^{(t)}) - f(\bx^*) = O\big(\frac{1}{t}\big)$ where $\bx^*$ denotes an optimal solution to the problem \eqref{eqn:const_conv_prob} \cite{frank1956algorithm, dunn1978conditional}. 

Recently, Frank-Wolfe methods and their variants have seen a renewed surge of interest spanning various domains, including the machine learning community. 
Notably, significant research efforts have been directed towards analyzing the convergence properties of different adaptations of the Frank-Wolfe method. 
Here we adapt the convergence result of \cite{jaggi2013revisiting} for the primal error of the proposed version of the FW algorithm.

\begin{theorem}[{\cite[Theorem 1]{jaggi2013revisiting}}]\label{thm:FW_conv}
    For each $t \in \NN$, the iterates $\bx^{(t)}$ of Algorithm \ref{alg:Frank_Wolfe} and all of its variants (i.e., approximate liner oracle and/or adaptive step-size) described in \Cref{sec:FW_description} satisfy
    \[
        f(\bx^{(t)}) - f(\bx^*) \leq \frac{2 C_f}{t+2}(1+\eta),
    \]
    where $\bx^* \in \cD$ is an optimal solution to the problem \eqref{eqn:const_conv_prob}, $\eta \geq 0$ is the accuracy parameter to which the linear subproblems are solved, cf. \eqref{eqn:linear_subprob}, and $C_f$ is the curvature constant of $f$ with respect to $\cD$, cf. \eqref{eqn:curvature}.
\end{theorem}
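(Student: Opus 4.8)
The plan is to run the classical descent-lemma-plus-induction argument for conditional gradient methods. Write $h^{(t)} \coloneqq f(\bx^{(t)}) - f(\bx^*)$ for the primal suboptimality and abbreviate $C \coloneqq C_f(1+\eta)$; the goal is to prove $h^{(t)} \leq \frac{2C}{t+2}$ for all $t \geq 1$, which is exactly the claimed bound.

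The first step is a one-step decrease estimate. Fix an iteration $t$ and let $\hat{\bs}^{(t)}$ be the (possibly approximate) minimizer produced in step a), so that \eqref{eqn:linear_subprob} holds. Applying the definition of the curvature constant \eqref{eqn:curvature} with $\bx = \bx^{(t)}$, $\bs = \hat{\bs}^{(t)}$, $\gamma = \gamma^{(t)}$, and $\by = \bx^{(t+1)} = (1-\gamma^{(t)})\bx^{(t)} + \gamma^{(t)}\hat{\bs}^{(t)}$ gives
\[
    f(\bx^{(t+1)}) \leq f(\bx^{(t)}) + \gamma^{(t)} \big\langle \hat{\bs}^{(t)} - \bx^{(t)},\, \nabla f(\bx^{(t)}) \big\rangle + \tfrac{1}{2}(\gamma^{(t)})^2 C_f .
\]
Next I would bound the inner product: by \eqref{eqn:linear_subprob} and the feasibility $\bx^* \in \cD$,
\[
    \big\langle \hat{\bs}^{(t)},\, \nabla f(\bx^{(t)}) \big\rangle \leq \big\langle \bx^*,\, \nabla f(\bx^{(t)}) \big\rangle + \tfrac{1}{2} C_f \gamma^{(t)}\eta ,
\]
while convexity of $f$ yields $\langle \bx^* - \bx^{(t)},\, \nabla f(\bx^{(t)})\rangle \leq f(\bx^*) - f(\bx^{(t)}) = -h^{(t)}$. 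Combining these and subtracting $f(\bx^*)$ produces the recursion
\[
    h^{(t+1)} \leq (1 - \gamma^{(t)})\, h^{(t)} + \tfrac{1}{2}(\gamma^{(t)})^2 C ,
\]
where the curvature term $\tfrac12(\gamma^{(t)})^2 C_f$ and the oracle-slack term $\gamma^{(t)}\cdot\tfrac12 C_f\gamma^{(t)}\eta$ have merged into $\tfrac12(\gamma^{(t)})^2 C$.

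I would then close the argument by induction on $t$ with $\gamma^{(t)} = \frac{2}{t+2}$. For the base case $t=1$, the choice $\gamma^{(0)}=1$ collapses the recursion to $h^{(1)} \leq \tfrac12 C \leq \tfrac{2C}{3}$. For the inductive step, substituting $h^{(t)} \leq \frac{2C}{t+2}$ into the recursion gives
\[
    h^{(t+1)} \leq \Big(1 - \tfrac{2}{t+2}\Big)\frac{2C}{t+2} + \frac{2C}{(t+2)^2} = \frac{2C(t+1)}{(t+2)^2} \leq \frac{2C}{t+3} ,
\]
the last inequality because $(t+1)(t+3) = (t+2)^2 - 1 \leq (t+2)^2$. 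This settles the bound for the fixed stepsize $\gamma^{(t)} = \frac{2}{t+2}$. For the adaptive variants of Section \ref{sec:FW_variants}, I would observe that line search and the fully corrective update each choose $\bx^{(t+1)}$ minimizing $f$ over a set containing the fixed-step point $(1-\gamma^{(t)})\bx^{(t)} + \gamma^{(t)}\hat{\bs}^{(t)}$; hence $f(\bx^{(t+1)})$ is no larger than for the fixed step, so the same recursion and the same induction apply verbatim. There is no serious obstacle here; the only point demanding care is the bookkeeping of the $\eta$-dependent slack in \eqref{eqn:linear_subprob}, which is calibrated so that after multiplication by $\gamma^{(t)}$ it fuses cleanly with the curvature term into $C_f(1+\eta)$ — matching the factors of $\tfrac12$ and the powers of $\gamma^{(t)}$ is where a stray constant could slip in.
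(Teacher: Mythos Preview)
Your argument is correct and is exactly the standard Jaggi proof: curvature bound plus approximate-oracle slack give the recursion $h^{(t+1)} \leq (1-\gamma^{(t)})h^{(t)} + \tfrac12(\gamma^{(t)})^2 C_f(1+\eta)$, and induction with $\gamma^{(t)}=\tfrac{2}{t+2}$ closes it. The paper does not supply its own proof of this theorem---it simply cites \cite[Theorem 1]{jaggi2013revisiting}---so there is nothing further to compare; your write-up is the proof one would find in that reference.
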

This $O(1/t)$ rate is recognized to be worst-case optimal, as shown in \cite[Lemma 3 \& Appendix C]{jaggi2013revisiting}. 
We remark here that certain more sophisticated variations of the Frank-Wolfe method, incorporating techniques such as "away steps" or "pairwise correction" to mitigate zig-zagging phenomena, have been demonstrated to achieve linear convergence under certain conditions \cite{lacoste2015global}.

\subsection{Proof of Theorem \ref{thm:convergence_FW}}\label{sec:proof_thm.5}

\begin{proof}[Proof of Theorem \ref{thm:convergence_FW}]
    Recall from \eqref{eqn:KS_approx_unit} that $\upisep(\bSigma)$ is the unique minimizer of 
    $f(\bX) := \| \bX - \bSigma \|^2$ in the feasible set $\cSsep(\cH)$. 
    It follows from Theorem \ref{thm:FW_conv} that
    \[
        f\big( \hbSigma^{(t)} \big) - f\big( \upisep( \bSigma) \big) \leq  \frac{2 C_f}{t+2}(1+\eta)
    \]
    where $C_f$ is the curvature constant of $f$ with respect to $\cD$, cf. \eqref{eqn:curvature}. 
    
    Observe that
    \begin{align*}
         f(\by) - f(\bx) - \big\langle \by - \bx, \nabla f (\bx ) \big\rangle
            &= \| \by - \bSigma \|^2 - \| \by - \bSigma \|^2 - 2 \langle \by - \bx, \bx - \bSigma \rangle
            = \| \by - \bx \|^2,
    \end{align*}
    and thus, $C_f = 2\, \diam \cBsep(\cH)^2 $. Since 
    \begin{align*}
        \cSsep(\cH) 
            &= \left\{ \bSigma' \in \cBsep(\cH): \tr \bSigma' = 1 \right\}\\
            &= \conv \left\{ \bigg( \bigotimes_{i=1}^K \bv_i \bigg) \otimes \bigg( \bigotimes_{i=1}^K \bv_i \bigg)^* : \bv_i \in \cH_i ~\&~ \| \bv_i \|=1, ~\forall i \in [K]  \right\},
    \end{align*}
    we can verify that
    \begin{align*}
        \diam \cBsep(\cH)
            &= \sup_{\bx, \by \in \cBsep(\cH) } \| \bx - \by \|\\
            &= \sup_{\bv_i, \bw_i \in \cH_i: \|\bv_i\| = \| \bw_i \|=1 } 
                \left\| \bigg(\bigotimes_{i=1}^K \bv_i \bigg) \otimes \bigg( \bigotimes_{i=1}^K \bv_i \bigg)^* 
 - \bigg( \bigotimes_{i=1}^K \bw_i \bigg) \otimes \bigg( \bigotimes_{i=1}^K \bw_i \bigg)^* \right\|\\
            &= \sup_{\bv_i, \bw_i \in \cH_i: \|\bv_i\| = \| \bw_i \|=1 }  \left( 2 - \prod_{i=1}^K \langle \bv_i, \bw_i \rangle^2 \right)^{1/2}\\
            &= \sqrt{2}.
    \end{align*}

    Lastly, by considering the Taylor series expansion of the quadratic objective function $f$ around the optimal solution $\bx^* = \upisep(\bSigma)$, we can deduce that for any feasible point $\bx \in \cSsep(\cH)$,
    \begin{align*}
        f(\bx) 
            &= f(\bx^*) + \langle \nabla f(\bx^*), \bx - \bx^* \rangle + \frac{1}{2} \langle \bx - \bx^*, \nabla^2 f(\bx^*) (\bx - \bx^*) \rangle\\
            &\geq f(\bx^*) + \left\| \bx - \bx^* \right\|^2
    \end{align*}
    because $\langle \nabla f(\bx^*), \bx - \bx^* \rangle \geq 0$ and $\nabla^2 f(\bx^*) = 2 \bI$. 
    
    Consequently,
    \begin{align*}
        \left\| \hbSigma^{(t)} - \upisep(\bSigma) \right\|^2
            \leq f \big( \hbSigma^{(t)} \big) - f \big(  \upisep(\bSigma) \big) 
            \leq \frac{8 (1 +\eta)}{t+2}.
    \end{align*}

\end{proof}

\section{Additional details on numerical experiments}\label{sec:supplement_experiments}

This section provides further technical details on the numerical implementations of our Kronecker-separable approximation algorithms described in the main text (Section \ref{sec:experiments}) and the design of various ablation studies.

\subsection{Implementation details}

\subsubsection{Linear minimization oracle for the Frank-Wolfe method}
Recall from Section \ref{sec:alg_Frank_Wolfe} of the main text that each iteration of the Frank–Wolfe (FW) method requires solving a \emph{linear minimization subproblem} (LMS):
\[
    \bZ^{(t)} \in \argmin_{\bZ \in \cA} \left\langle \bZ, \hbSigma^{(t)} - \bSigma \right\rangle,
\]
where $\cA$ is the set of rank-1 product projectors, cf. \eqref{eqn:setA}. 
Here we describe one practical approach to solve this LMS using a nonconvex toric parameterization and an alternating maximization scheme, which is summarized in Algorithm \ref{alg:LMO}.

\paragraph{Toric parameterization} 
Let $\bbS_i \coloneqq \left\{ \bv_i \in \cH_i: \| \bv_i \| = 1 \right\}$ denote the unit sphere in $\cH_i$, for each $i \in [K]$. 
Observe that LMS can be equivalently reformulated as a maximization problem over the torus $\bbS_1 \times \cdots \times \bbS_K$. 
Specifically, $\bZ^{(t)} = \Big( \bigotimes_{i=1}^K \bv_i^{(t)} \Big) \otimes \Big( \bigotimes_{i=1}^K \bv_i^{(t)} \Big)^*$ where
\begin{equation}\label{eqn:toric_maximization}
    \left( \bv_i^{(t)}, \cdots, \bv_K^{(t)} \right) \in 
        \argmax_{\bv_i \in \bbS_i, ~\forall i \in [K]}
            \left\langle \left( \bigotimes_{i=1}^K \bv_i^{(t)} \right) \otimes \left( \bigotimes_{i=1}^K \bv_i^{(t)} \right)^*, ~ \ovbSigma - \hbSigma^{(t)} \right\rangle.
\end{equation}
Note that this is the problem of finding a maximum eigenvector $\bv_*^{(t)}$ of $\ovbSigma - \hbSigma^{(t)}$ (represented as a matrix) with a rank-1 tensor product constraint $\bv_*^{(t)} = \bigotimes_{i=1}^K \bv_i^{(t)}$.

\paragraph{Alternating maximization} 
For an ordered tuple $\cV = (\bv_1, \dots, \bv_K) \in \bbS_1 \times \cdots \times \bbS_K$ and for each $i \in [K]$, we define a linear map $\Psi_{\cV, i}: \cB(\cH) \to \cB(\cH_i)$ that takes a partial inner product against $\{ \bv_{j} \}_{j \neq i}$ such that
\begin{equation}\label{eqn:partial_inner}
    \Psi_{\cV, i}: \left( \bigotimes_{i=1}^K \bu_i \right) \otimes \left( \bigotimes_{i=1}^K \bw_i \right)^* \mapsto \left(\prod_{j=1: ~j \neq i}^K \langle \bv_j, \bu_j \rangle \langle \bw_j, \bv_j \rangle \right) \bu_i \otimes \bw_i^*.
\end{equation}
By linearity, this map is well-defined for the entire $\cB(\cH)$: thus, for any $\bSigma' \in \cBsa_+(\cH)$, its image $\Psi_{\cV, i}(\bSigma')$ is obtained by taking partial inner product with $\bv_j \bv_j^*$ for all $j \in [K]$ but $i$. 
To illustrate this map $\Psi_{\cV, i}$, we consider a simple case where $K = 2$, $\cH_1 = \FF^m$, and $\cH_2 = \FF^n$, and represent $\bSigma' \in \FF^{m \times m \times n \times n}$ by fixing a basis. 
Then it follows that
\begin{align*}
    \Psi_{\cV, 1}(\bSigma') \in \FF^{m \times m} &\quad\text{such that}\quad
        \Psi_{\cV, 1}(\bSigma)_{ab} = \sum_{a',b' = 1}^n \Sigma'_{aba'b'} (\bv_2)_{a'}\overline{(\bv_2)_{b'}},\\
    \Psi_{\cV, 2}(\bSigma') \in \FF^{n \times n} &\quad\text{such that}\quad
        \Psi_{\cV, 2}(\bSigma)_{a'b'} = \sum_{a,b = 1}^m \Sigma'_{aba'b'} (\bv_1)_{a}\overline{(\bv_1)_{b}},
\end{align*}
where $(\bv_i)_a$ is the $a$-th coordinate of $\bv_i$, and $\overline{c}$ denotes the complex conjugate of $c$.

Now we describe an alternating maximization algorithm that iteratively solves the problem \eqref{eqn:toric_maximization}. 
Starting from an arbitrary initialization, each iteration updates one $\bv_i$ at a time by finding the top eigenvector of $\Psi_{\cV, i}(\ovbSigma - \hbSigma^{(t')})$. 
Though non-convex, this simple alternating scheme typically converges quickly in practice.
We generally initialize $\cV$ by sampling $\{ \bv_i^{(0)} \}$ uniformly at random on each sphere $\bbS_i$. 
The pseudocod of this procedure is summarized in Algorithm \ref{alg:LMO}.

\begin{algorithm}
\caption{Alternating maximization algorithm for the linear minimization sub-problem (LMS).}
\label{alg:LMO}
\begin{algorithmic}[1]
    \Require 
        $\bSigma' \in \cBsa_+(\cH)$ where $\cH = \bigotimes_{i=1}^K \cH_i$
        \qquad(e.g., $\bSigma' \gets \ovbSigma - \hbSigma^{(t)}$, in Algorithm \ref{alg:frank_wolfe} Line 3)
    \Ensure 
        Approximate solution $\hZ$ to the LMS in FW iterations
    \State Choose an arbitrary sequence of unit vectors $\cV = \left( \bv_1^{(0)}, \cdots, \bv_K^{(0)} \right) \in  \bbS_1 \times \cdots \times \bbS_K$
    \For{ $t' = 0, \dots, T$}
        \For{ i = 1, \dots, K }
            \State 
            Find $\bv_i^{(t'+1)} \in \bbS_i$ such that $\bv_i^{(t'+1)} \in \argmax_{\bv_i \in \bbS_i} \langle \bSigma'_i, ~ \bv_i \bv_i^*  \rangle$ where $\bSigma'_i \coloneqq \Psi_{\cV, i}(\ovbSigma - \hbSigma^{(t')})$
            \State 
            Replace $\bv_i^{(t')}$ in $\cV$ with $\bv_i^{(t'+1)}$
        \EndFor
    \EndFor
    
    \State $\hZ = \big( \bigotimes_{i=1}^K \bv_i^{(T)} \big) \otimes \big( \bigotimes_{i=1}^K \bv_i^{(T)} \big)^*$
\end{algorithmic}
\end{algorithm}

In our code implementation of the FW method, this procedure (Algorithm~\ref{alg:LMO}) is applied each FW iteration to find a good rank-1 product projector $\bZ^{(t)}$. 
Despite the nonconvex tensor-rank constraint, the alternating updates often yield near-optimal atoms within practical runtimes, especially for moderate dimensions.

\subsubsection{Technical details in the implementation of the gradient descent method}
Recall from Section \ref{sec:alg_Gradient_Descent} in the main text that our gradient-descent (GD) method parameterizes a candidate separable state with a mixture of $\Napprox$ rank-1 factors (see Algorithm \ref{alg:gradient_descent}). 
In our PyTorch implementation, we store all trainable parameters collectively as 
\[
    \theta := \left( ( \tilde{\bv}_{i,j} )_{j=1}^{K}, \tilde{w}_i \right)_{i=1}^{\Napprox},
\]
where each $\tilde{\bv}_{i,j}$ is a raw local vector (real or complex), and $\tilde{w}_i$ is a log-weight. 
From these, we derive (1) \emph{mixture weights} $w_i(\theta) =  \softmax_i( \tw_1, \dots, \tw_{\Napprox})$, and (2) \emph{local unit vectors} $\bv_{i,j}(\theta) = \frac{\tilde{\bv}_{i,j}}{\|\tilde{\bv}_{i,j}\|}$. 

\paragraph{Forward/backward pass} 
Given $\theta$, we construct the approximating state 
\[
    \hbSigma(\theta) = \sum_{i=1}^{\Napprox} w_i(\theta) \left( \bv_{i,1} \otimes \dots \otimes \bv_{i,K} \right) \otimes \left( \bv_{i,1} \otimes \dots \otimes \bv_{i,K} \right),
\]
and compute the Hilbert–Schmidt loss (=squared Hilbert-Schmidt distance to the target $\bSigma$): 
\[
    \mathcal{L}(\theta) := \| \hbSigma(\theta) - \bSigma \|^2.
\]
We evaluate this loss entirely in PyTorch, which allows automatic differentiation and backpropagation through all local vectors and weights.

\paragraph{Optimization/training} 
We typically use Adam as our optimizer for rapid convergence, although other standard optimizers (SGD, L-BFGS, etc.) are also possible. 
In most experiments, we set either a fixed learning rate (e.g., $10^{-1}$ or $10^{-3}$) or a diminishing rate that decays by a factor of $\rho$ (e.g., $1/\sqrt{2}$) each epoch. 
Additional hyperparameters include (1) the number of epochs (between 1 and 10), (2) steps per epoch (between 100 and 5000), and (3) early-stopping tolerance (ranging from $10^{-6}$ and $10^{-12}$). 
These hyperparameters are all set at moderate values, and can be adjusted depending on the problem’s dimension or desired accuracy. 

Empirically, this ``shallow'' GD converges reasonably quickly for moderate dimensions and can often find accurate approximations (see Figure xxx).

\subsubsection{Deep-parameterized GD (DP-GD)}\label{sec:DP-GD_details}
We now describe a \emph{deep parameterization} variant of gradient descent—referred to as DP-GD—that replaces the explicit parameter list $\theta$ with a trainable neural network. 
Unlike the shallow GD approach, which directly stores each mixture component’s parameters, $( w_i, (\bv_{i,k})_{k=1}^K )$, DP-GD learns them through a multilayer perceptron (MLP)---i.e., fully connected feedforward neural network with nonlinear activation functions---that \emph{generate} the parameters. 
Specifically, we treat each mixture component index $i \in \{1, \dots, \Napprox\}$ as a one-hot input vector, which the MLP maps to an output $( w_i, (\bv_{i,k})_{k=1}^K )$. 

Here we provide a self-contained account of DP-GD, as it is not covered in the main text.

\paragraph{Network architecture}
We consider a fully connected feedforward network $F_{\phi}$ with one or two hidden layers (each of width 25–400) and rectified linear units (ReLUs) as hidden activations. 
For each mixutre index $i \in \{1, \dots, \Napprox\}$, we feed a one-hot vector $\be_i \in \RR^{\Napprox}$ into $F_{\phi}$ and obtain an output vector
\[
    F_{\phi}(\be_i) =  ( \tilde{w}_i (\phi), \tilde{\bv}_{i,k}(\phi) )_{k=1}^K )
        \in
        \begin{cases}
           \RR^{1 + \sum_k d_k}, & \text{(real mode)},\\
           \RR^{1 + 2\sum_k d_k}, & \text{(complex mode)},
        \end{cases}
\]
which encodes (1) a log-weight $\tilde{w}_i$ and (2) the raw coordinates for the local vectors $\tilde{\bv}_{i,k}$ in real numbers\footnote{A complex number $c \in \CC$ is encoded in two real numbers representing the real and the imaginary parts as $(\Re c, \Im c)$.}. 

In the final layer, we process $F_{\phi}(\be_i)$ to generate $( w_i (\phi), \bv_{i,k}(\phi) )_{k=1}^K )$ through the following output transformations.
\begin{itemize}
    \item 
    \textbf{Real-valued mode:} 
    We append a sigmoid activation in the final layer so that all coordinates lie in $[0,1]$. 
    We then normalize the first entry (the weight coordinate) by their sum (across $i \in [\Napprox]$), and normalize the remaining entries in each block to produce local unit vectors $\bv_{i,k}$. 
    \item
    \textbf{Complex-valued mode:}
    We leave the final layer as linear. 
    We then separately apply \texttt{torch.sigmoid} transform only to $\tilde{w}_i$ (to keep it in the range $[0,1]$). 
    For the local vectors, we aggregate real and imaginary parts in each block and normalize them via $\ell_2$-norm. 
\end{itemize}

\paragraph{Forward/backward pass} 
Once all $\{ w_i, \bv_{i,k}\}$ are generated (in a single forward pass for $i = 1, \dots, \Napprox$, we  construct the approximating state 
$\hbSigma(\phi)$ and compute the Hilbert–Schmidt loss $\mathcal{L}(\phi) := \| \hbSigma(\phi) - \bSigma \|^2$. 
Again, we evaluate this loss entirely in PyTorch, which allows automatic differentiation for backpropagation. 

\paragraph{Optimizer and hyperparameters}
We use Adam with a typical learning rates (e.g., $10^{-3}$ or $10^{-2}$) and optional decay per epoch. 
For many moderate-dimensional problems, a single or two hidden layers suffice, and we compare the performance by varying each hidden layer’s width in $\{25, 50, 100, 200, 400\}$ and depth $\{2, 3\}$ (i.e., one or two hidden layers). 
These values provide enough expressive capacity to handle mid-sized separable approximations in our experiments without incurring severe runtime overhead.

\paragraph{Performance observations}
Empirically, DP-GD can converge to high-quality solutions faster than shallow GD as the width and depth increase, assuming the learning rate is sufficiently small to ensure stable convergence; see Figure xxx. 
It is possibly because the learned network parameters $\phi$ effectively share representation across mixture components, sometimes helping avoid local minima. 
However, we note that this benefit comes with extra overhead in hyperparameter tuning and more complex behaviors (e.g., sensitivity to large learning rates). 
In practice, we find that for moderate-size problems, DP-GD either matches or slightly outperforms shallow GD once the network depth/width and learning rate are set appropriately.
For smaller tasks, shallow GD with a well-chosen (more aggressive) step size can often be simpler and just as effective.

\subsection{Comparing methodological variations}\label{sec:ablation_studies}
Here, we investigate how various design choices (step size rules, optimizers, network architecture) affect the performance of FW, GD, and DP-GD. 
Overall, these ablation results highlight that each method can be tuned for better performance: fully-corrective FW is rapid but expensive per iteration; Adam with a decaying LR is robust in shallow GD; and DP-GD benefits from well-chosen network width, depth, and step sizes.

\subsubsection{Frank-Wolfe stepsize rules}
First of all, we compare the performance of three step-size rules of the Frank-Wolfe algorithm described in Section \ref{sec:alg_Frank_Wolfe}, namely, fixed diminishing stepsize rule $\gamma^{(t)} = \frac{2}{t+2}$, line-search, and fully-corrective updates\footnote{See \Cref{sec:FW_description} for the description of line-search and fully-corrective updates.}. 

\begin{figure}[ht]
    \centering
    \begin{subfigure}[t]{0.32\textwidth}
        \centering
        \includegraphics[width=\linewidth]{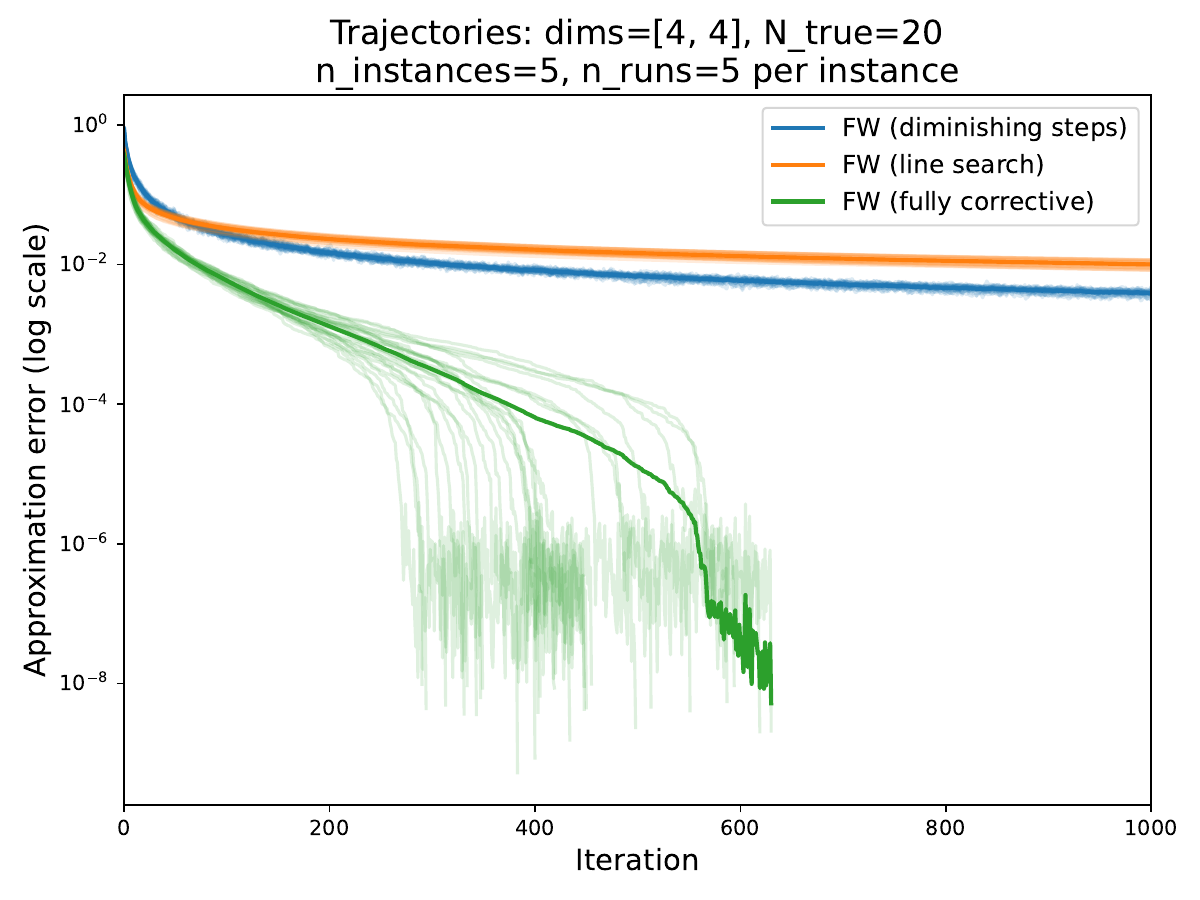}
        \caption{\footnotesize 
        FW config 1 (dim $4\times4$, $\Ntrue=20$).
        }
        \label{fig:FW_stepsizes.1}
    \end{subfigure}
    \hfill
    \begin{subfigure}[t]{0.32\textwidth}
        \centering
        \includegraphics[width=\linewidth]{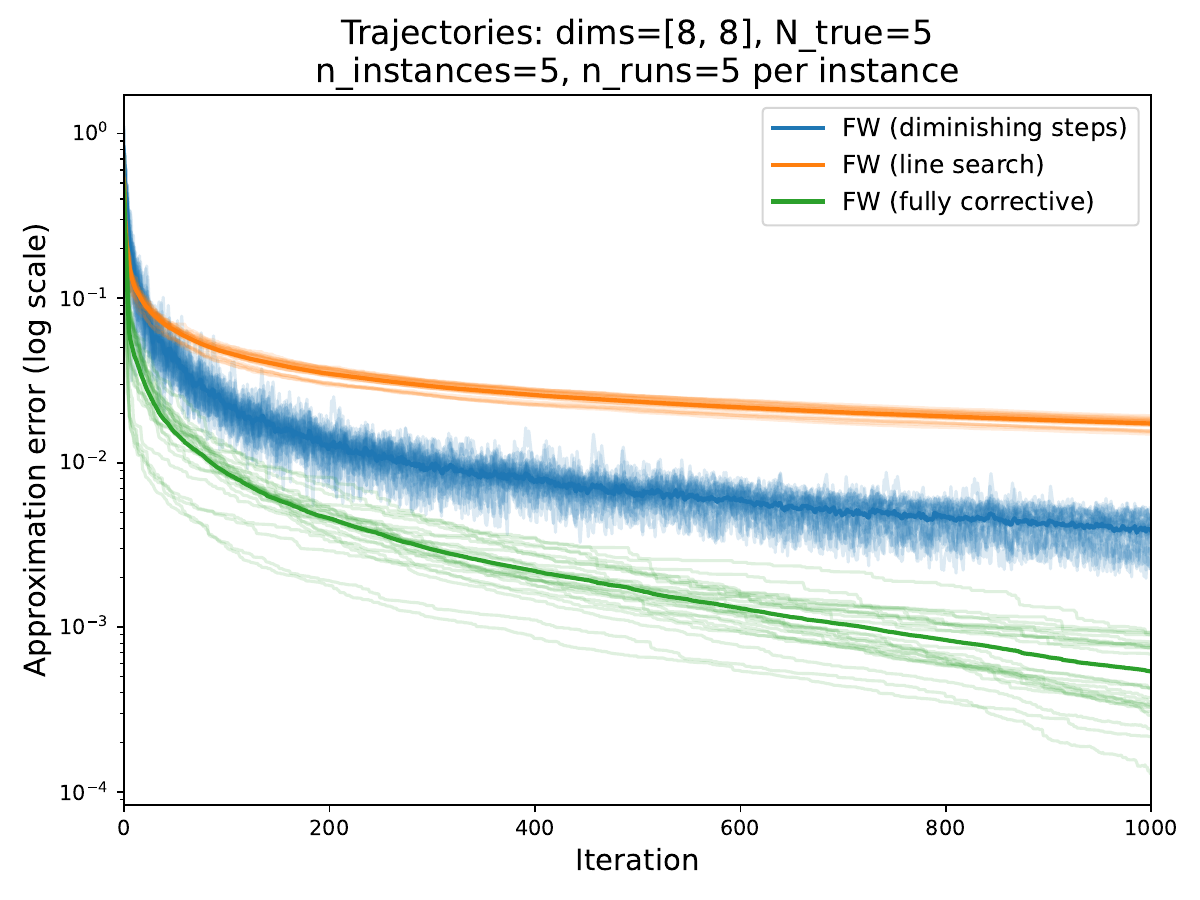}
        \caption{\footnotesize 
        FW config 2 (dim $8\times8$, $\Ntrue=5$).
        }
        \label{fig:FW_stepsizes.2}
    \end{subfigure}
    \hfill
    \begin{subfigure}[t]{0.32\textwidth}
        \centering
        \includegraphics[width=\linewidth]{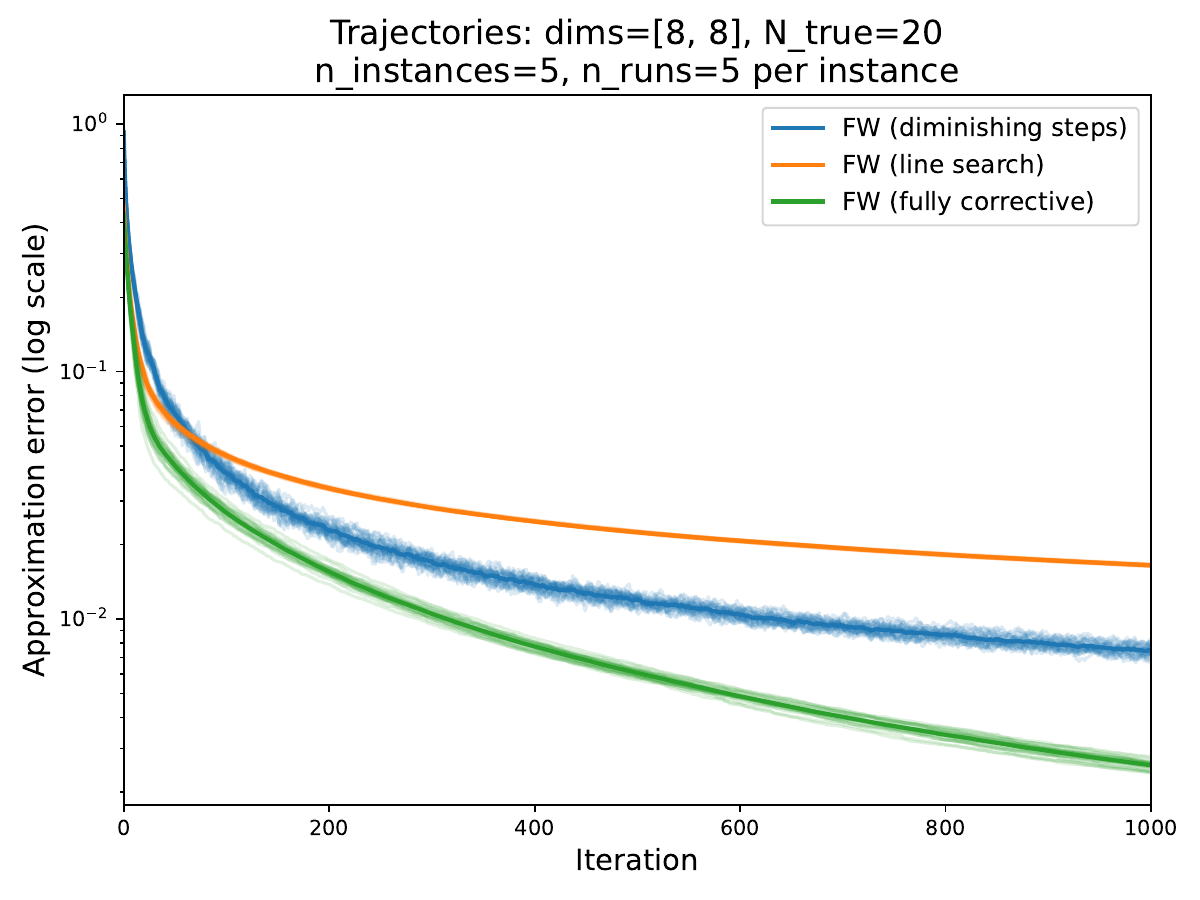}
        \caption{\footnotesize 
        FW config 3 (dim $8\times8$, $\Ntrue=20$).
        }
        \label{fig:FW_stepsizes.3}
    \end{subfigure}

    \caption{\small
    \textbf{Comparison of FW step-size rules:} Fully-corrective updates converge fastest (sometimes even terminating early by reaching the early termination threshold $10^{-8}$) but are costlier per iteration; fixed diminishing offers a good trade-off, and line search is steadier but often slower.
    }
    \label{fig:FW_stepsizes}
\end{figure}

We find that fully-corrective updates converge fastest, though each iteration incurs an expensive least-squares reweighting, while the simpler diminishing and line-search rules show slower but steady progress; see Figure \ref{fig:FW_stepsizes}. 
Table~\ref{tab:frank_wolfe_time} reports average iteration times (across 10 independent runs over 10 random problem instances each). Fully-corrective FW is up to two orders of magnitude slower than simpler rules in higher dimensions.

\begin{table}[t]
    \caption{Mean ($\pm 1$ standard deviation) time, measured in seconds, consumed per iteration (first $50$ iterations) of $10$ independent runs over $10$ random separable covariance instances.}
    \label{tab:frank_wolfe_time}
    \centering
    \begin{adjustbox}{max width=\textwidth}
    \begin{tabular}{l c c c}
        \toprule
        Local dimension $d$        &   Diminishing steps    &   Line-search    &   Fully corrective updates\\
        \midrule
        2       &   $8.4403 \times 10^{-4}$ ($\pm 3.6765 \times 10^{-4}$)       &    $8.7667 \times 10^{-4}$ ($\pm 3.0834 \times 10^{-4}$)   &   $4.0465 \times 10^{-3}$ ($\pm 1.4880 \times 10^{-3}$) \\
        4       &   $1.2426 \times 10^{-3}$ ($\pm 4.5005 \times 10^{-4}$)       &    $1.4844 \times 10^{-3}$ ($\pm 4.6601 \times 10^{-4}$)   &   $8.6128 \times 10^{-3}$ ($\pm 4.8758 \times 10^{-3}$) \\
        8       &   $1.7363 \times 10^{-3}$ ($\pm 6.2351 \times 10^{-4}$)       &    $2.0288 \times 10^{-3}$ ($\pm 7.7111 \times 10^{-4}$)   &   $9.4452 \times 10^{-2}$ ($\pm 3.9241 \times 10^{-2}$) \\
        16      &   $1.6333 \times 10^{-2}$ ($\pm 9.9872 \times 10^{-3}$)       &    $1.4358 \times 10^{-2}$ ($\pm 8.5820 \times 10^{-3}$)   &   $9.1910 \times 10^{-1}$ ($\pm 3.5155 \times 10^{-1}$) \\
        32      &   $1.8818 \times 10^{-1}$ ($\pm 8.6191 \times 10^{-2}$)       &    $1.9480 \times 10^{-1}$ ($\pm 9.0450 \times 10^{-2}$)   &   $1.2051 \times 10^{1}$ ($\pm 3.8209$) \\
        \bottomrule
    \end{tabular}
    \end{adjustbox}
\end{table}

\subsubsection{Gradient descent optimizer and stepsize}
Next, we study how different optimizers (e.g.\ SGD, SGD+momentum, Adadelta, Adam, L-BFGS) and learning rates (fixed rate at various values and decaying schedule each epochs) influence shallow GD.
Figure~\ref{fig:GD_analysis} shows that \textbf{Adam} and \textbf{L-BFGS} often achieve better early progress than the FW baseline, although L-BFGS is more sensitive to initialization and its per-iteration computational cost can be more expensive. 
Learning-rate examinations with Adam confirm that large constant rates may plateau early (around the level of $10^{-2}$), while too-small rates lead to slow convergence. 
A moderate fixed value of learning rate, or a gradually decaying schedule appear to balance speed and stability.

\begin{figure}[ht]
    \centering
    \begin{subfigure}[t]{0.32\textwidth}
        \centering
        \includegraphics[width=\linewidth]{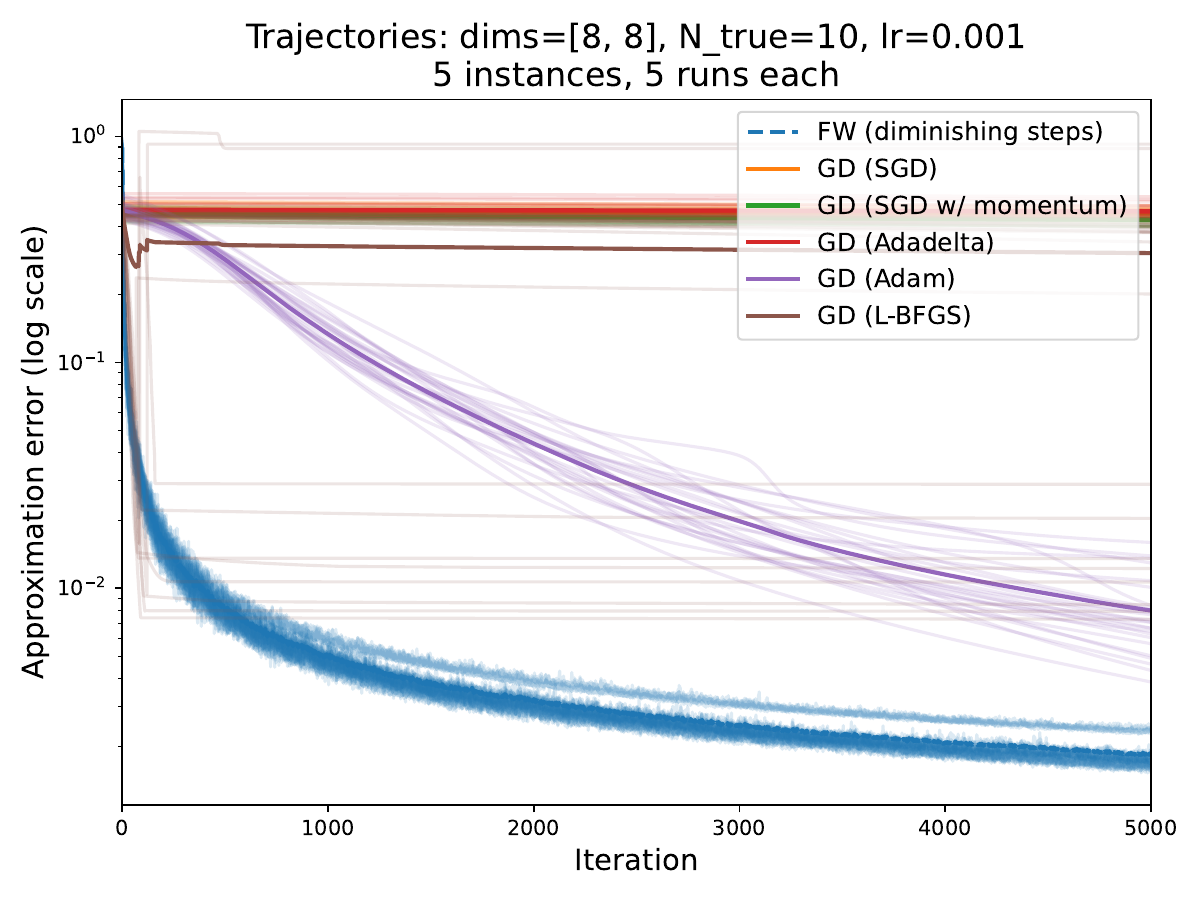}
        \caption{\footnotesize 
        Various optimizers with a modest learning rate (0.001, default). GD methods are too slow compared to the FW.
        }
        \label{fig:GD_optimizers.1}
    \end{subfigure}
    \hfill
    \begin{subfigure}[t]{0.32\textwidth}
        \centering
        \includegraphics[width=\linewidth]{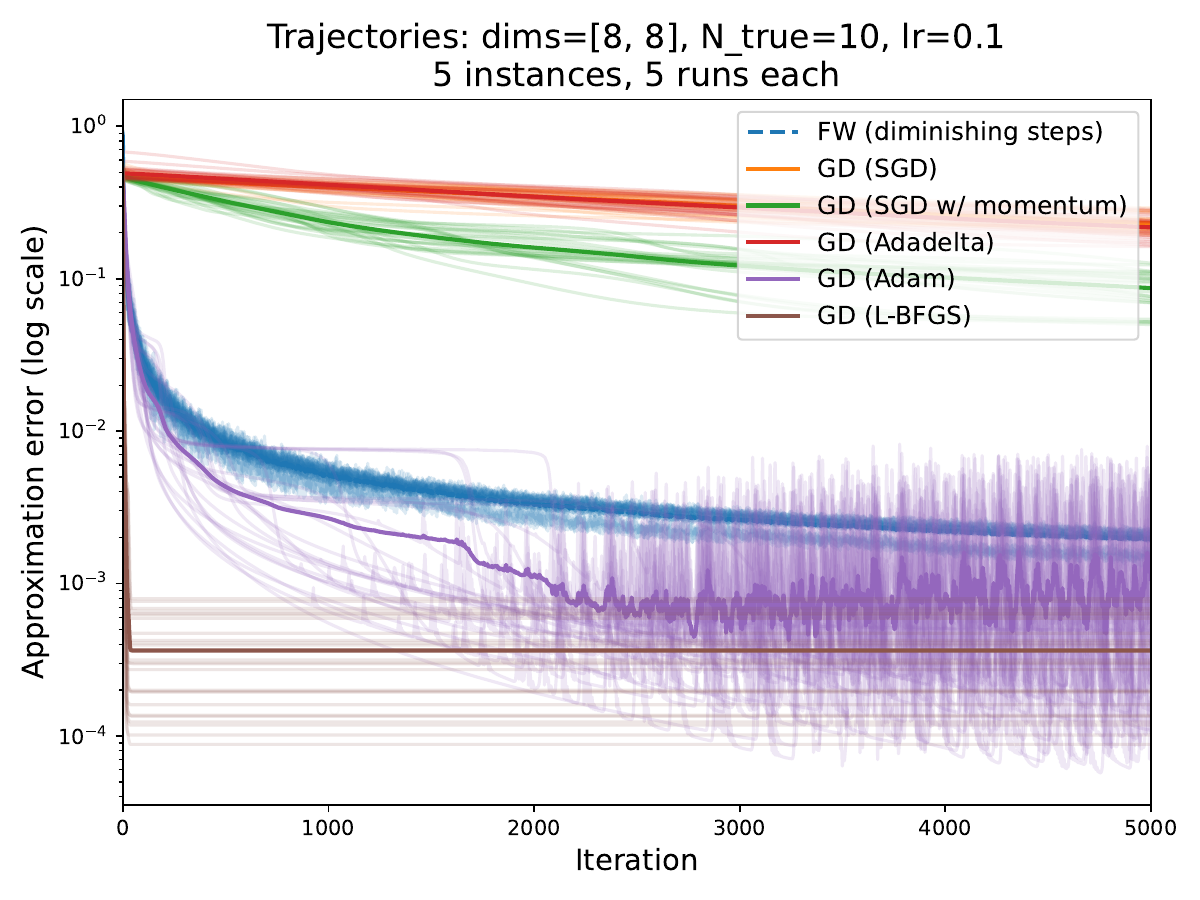}
        \caption{\footnotesize 
        Various optimizers with a more aggressive learning rate (0.1); Adam and L-BFGS lead but risk stalling.
        }
        \label{fig:GD_optimizers.2}
    \end{subfigure}
    \hfill
    \begin{subfigure}[t]{0.32\textwidth}
        \centering
        \includegraphics[width=\linewidth]{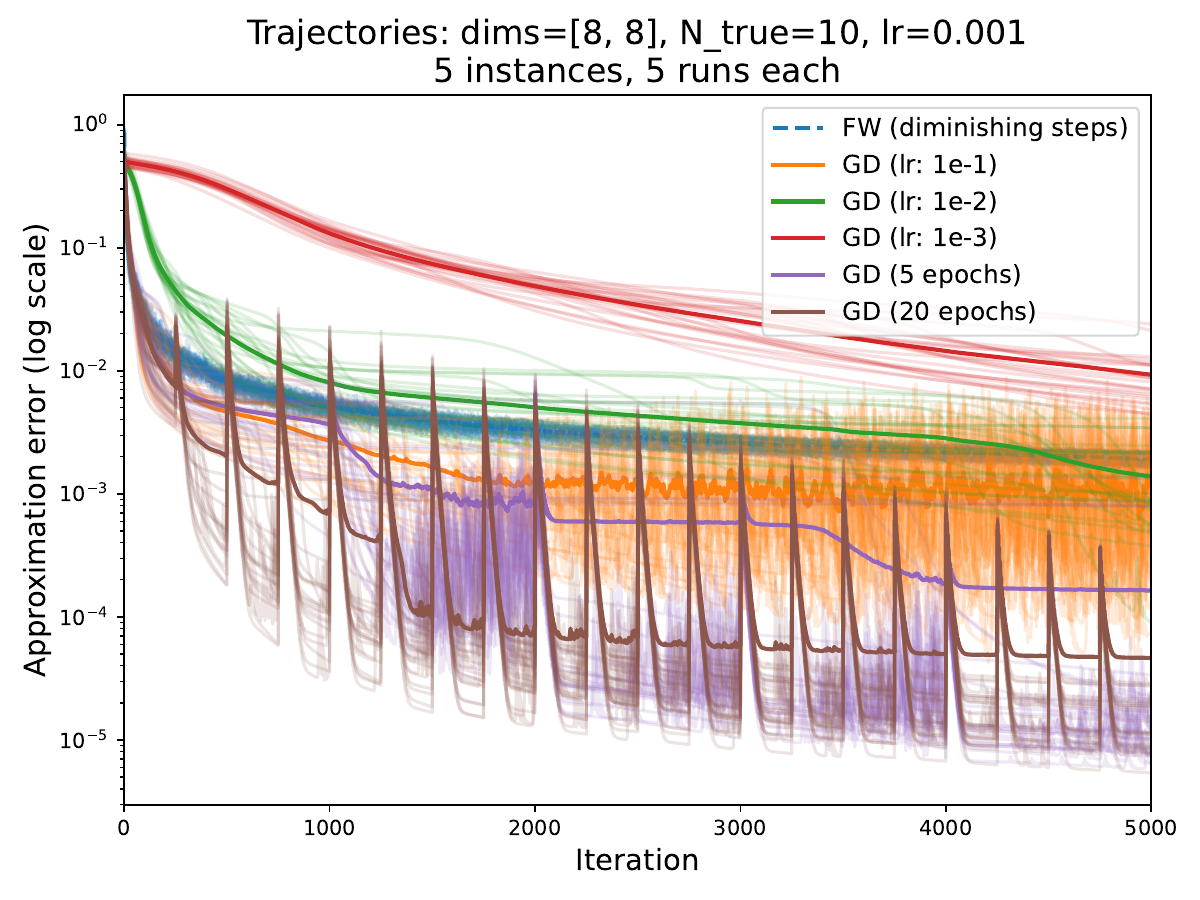}
        \caption{\footnotesize 
        Various learning rate schedules for Adam. 
        For decaying schedules, we gradually reduce LR from $10^{-1}$ to $10^{-3}$ each epoch geometrically.
        A gradually decaying LR with 20 epochs (in brown) is often best overall. 
        }
        \label{fig:GD_learning_rates}
    \end{subfigure}

    \caption{\small
    \textbf{Comparison of GD optimizer and stepsizes:} Adam typically outperforms simpler methods but may plateau if learning rate is too large. L-BFGS can excel yet is computationally heavier. 
    }
    \label{fig:GD_analysis}
\end{figure}

\begin{table}[t]
    \caption{Mean ($\pm 1$ standard deviation) time, measured in seconds, consumed per iteration (first $100$ iterations) of $10$ independent runs over $10$ random separable covariance instances.}
    \label{tab:gradient_descent_time}
    \centering
    \begin{adjustbox}{max width=\textwidth}
    \begin{tabular}{l c c c c}
        \toprule
        Local dimension $d$        &   FW (Diminishing)         &   GD (SGD)        &   GD (Adam)       &   GD (L-BFGS) \\
        \midrule
        2       &   $8.4431 \times 10^{-4}$ ($\pm 3.4301 \times 10^{-4}$)       &    $3.5084 \times 10^{-3}$ ($\pm 4.0567 \times 10^{-3}$)   &   $4.3180 \times 10^{-3}$ ($\pm 2.9685 \times 10^{-4}$)  &   $7.3818 \times 10^{-3}$ ($\pm 1.4514 \times 10^{-2}$) \\
        4       &   $1.2893 \times 10^{-3}$ ($\pm 4.2442 \times 10^{-4}$)       &    $3.5381 \times 10^{-3}$ ($\pm 4.1040 \times 10^{-3}$)   &   $4.3346 \times 10^{-3}$ ($\pm 2.8784 \times 10^{-4}$)  &   $2.8998 \times 10^{-2}$ ($\pm 3.5452 \times 10^{-2}$) \\
        8       &   $1.7519 \times 10^{-3}$ ($\pm 6.0140 \times 10^{-4}$)       &    $3.9294 \times 10^{-3}$ ($\pm 4.2241 \times 10^{-3}$)   &   $4.7363 \times 10^{-3}$ ($\pm 3.0675 \times 10^{-4}$)  &   $2.6015 \times 10^{-2}$ ($\pm 3.5911 \times 10^{-2}$) \\
        16      &   $1.0302 \times 10^{-2}$ ($\pm 5.1913 \times 10^{-3}$)       &    $3.3981 \times 10^{-2}$ ($\pm 1.6551 \times 10^{-2}$)   &   $3.1619 \times 10^{-2}$ ($\pm 6.5792 \times 10^{-3}$)  &   $1.4338 \times 10^{-1}$ ($\pm 2.1434 \times 10^{-1}$) \\
        32      &   $1.8255 \times 10^{-1}$ ($\pm 8.4548 \times 10^{-2}$)       &    $1.1819 \times 10^{-1}$ ($\pm 2.8048 \times 10^{-2}$)   &   $1.1282 \times 10^{-1}$ ($\pm 1.5549 \times 10^{-2}$)  &   $7.0365 \times 10^{-1}$ ($\pm 9.7290 \times 10^{-1}$) \\
        \bottomrule
    \end{tabular}
    \end{adjustbox}
\end{table}

\subsubsection{Deep parameterization width and depth}
Lastly, we examine the impact of MLP architecture in DP-GD to see if deep-neural-network parameterization could bring any benefits compared to the shallow gradient descent method. 
We fix the same $\Napprox$ as the shallow GD baseline, and vary the width ($\{25, 50, 100, 200, 400\}$) or the number (depth 2 or 3) of hidden internal layers.  
We continue to use Adam as our optimizer, and set learning rates as a fixed values, e.g., 0.1, 0.01 or 0.001. 

Figures~\ref{fig:DP_GD_width} and \ref{fig:DP_GD_depth} show that:
\begin{itemize}
    \item 
    Larger widths can converge faster if the learning rate is tuned sufficiently small (e.g., $10^{-2}$ or $10^{-3}$), but large learning rates often destabilize training.

    \item 
    Deeper networks (e.g. of depth 3, with 2 hidden layers) can outperform 2-layer nets, but only when step sizes are appropriately small.
\end{itemize}

\begin{figure}[ht]
    \centering
    \begin{subfigure}[t]{0.32\textwidth}
        \centering
        \includegraphics[width=\linewidth]{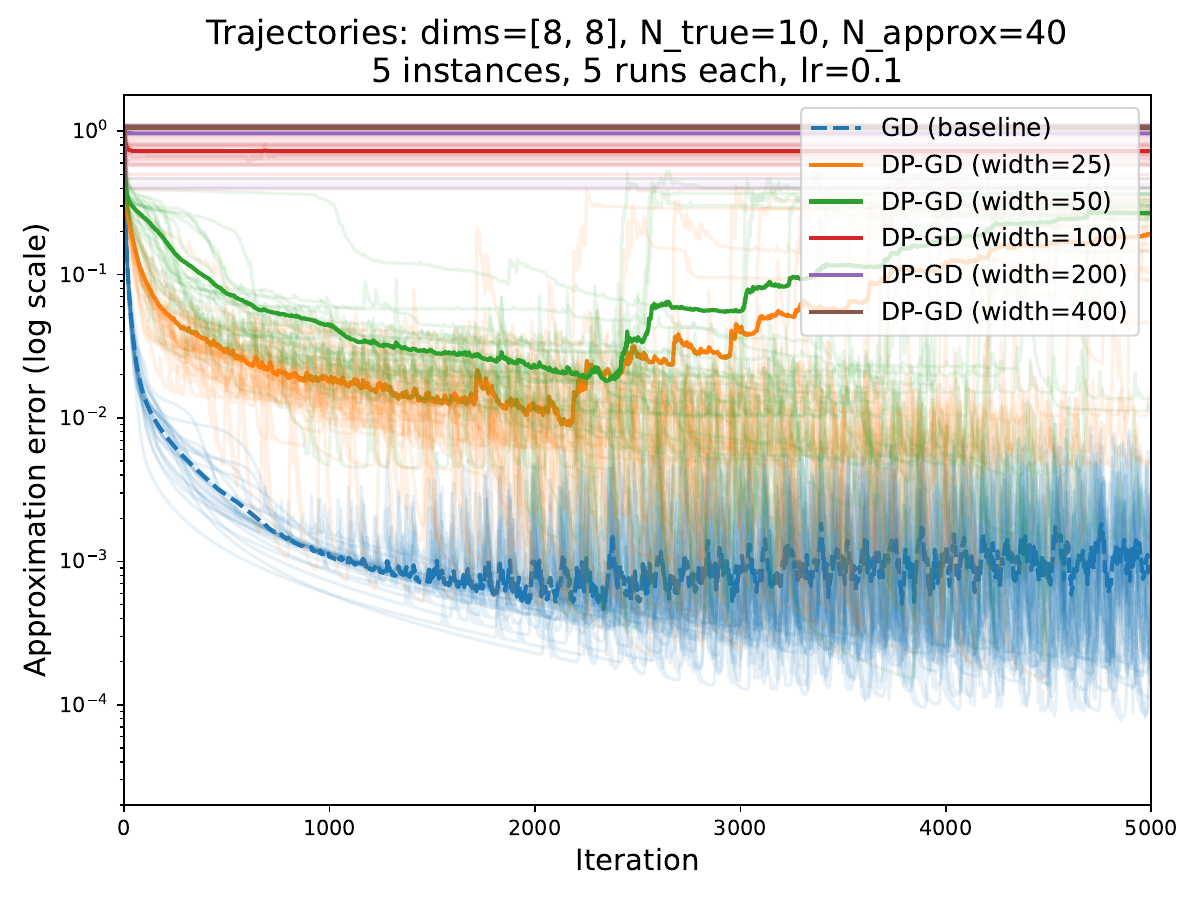}
        \caption{\footnotesize 
        $d=8$, $\Napprox=40$, $\eta=0.1$.
        }
        \label{fig:DPGD_width.1}
    \end{subfigure}
    \hfill
    \begin{subfigure}[t]{0.32\textwidth}
        \centering
        \includegraphics[width=\linewidth]{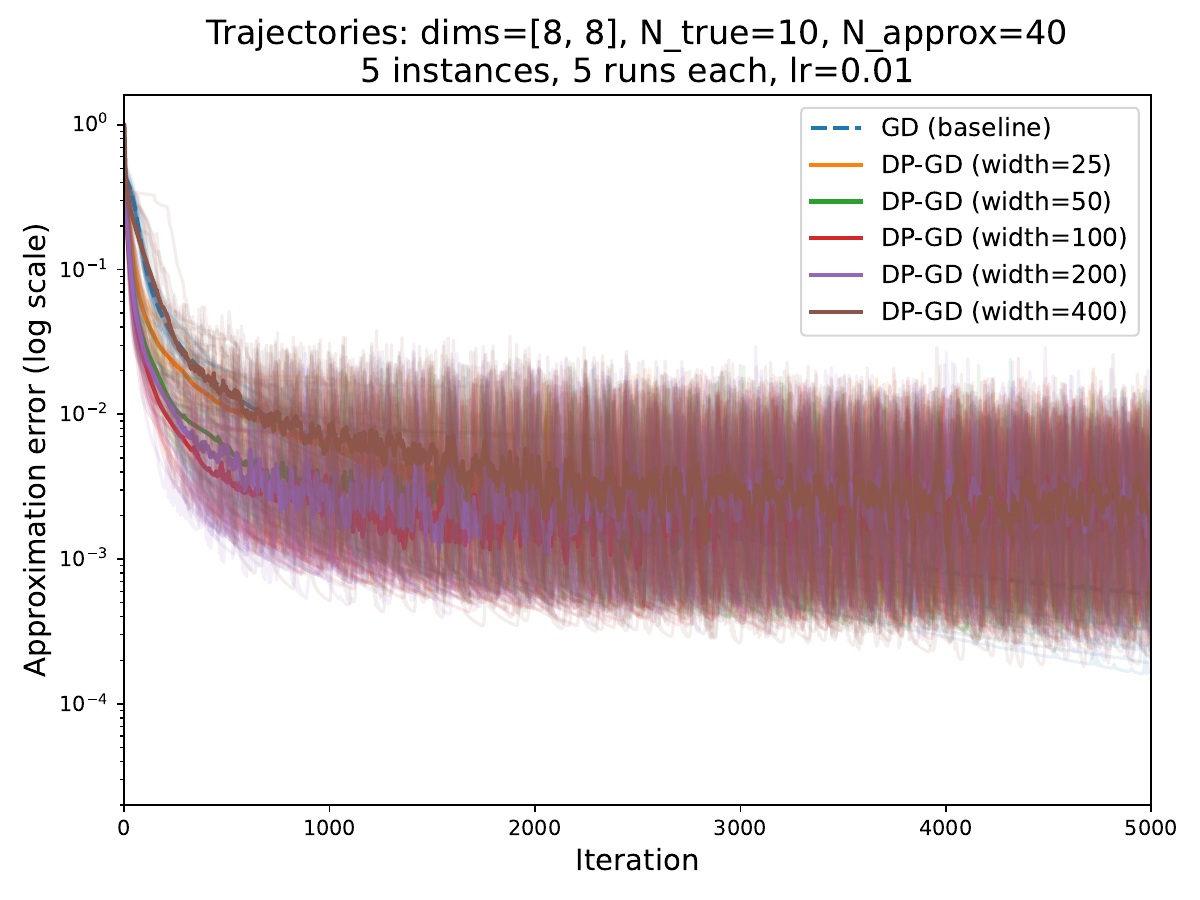}
        \caption{\footnotesize 
        $d=8$, $\Napprox=40$, $\eta=0.01$.
        }
        \label{fig:DPGD_width.2}
    \end{subfigure}
    \hfill
    \begin{subfigure}[t]{0.32\textwidth}
        \centering
        \includegraphics[width=\linewidth]{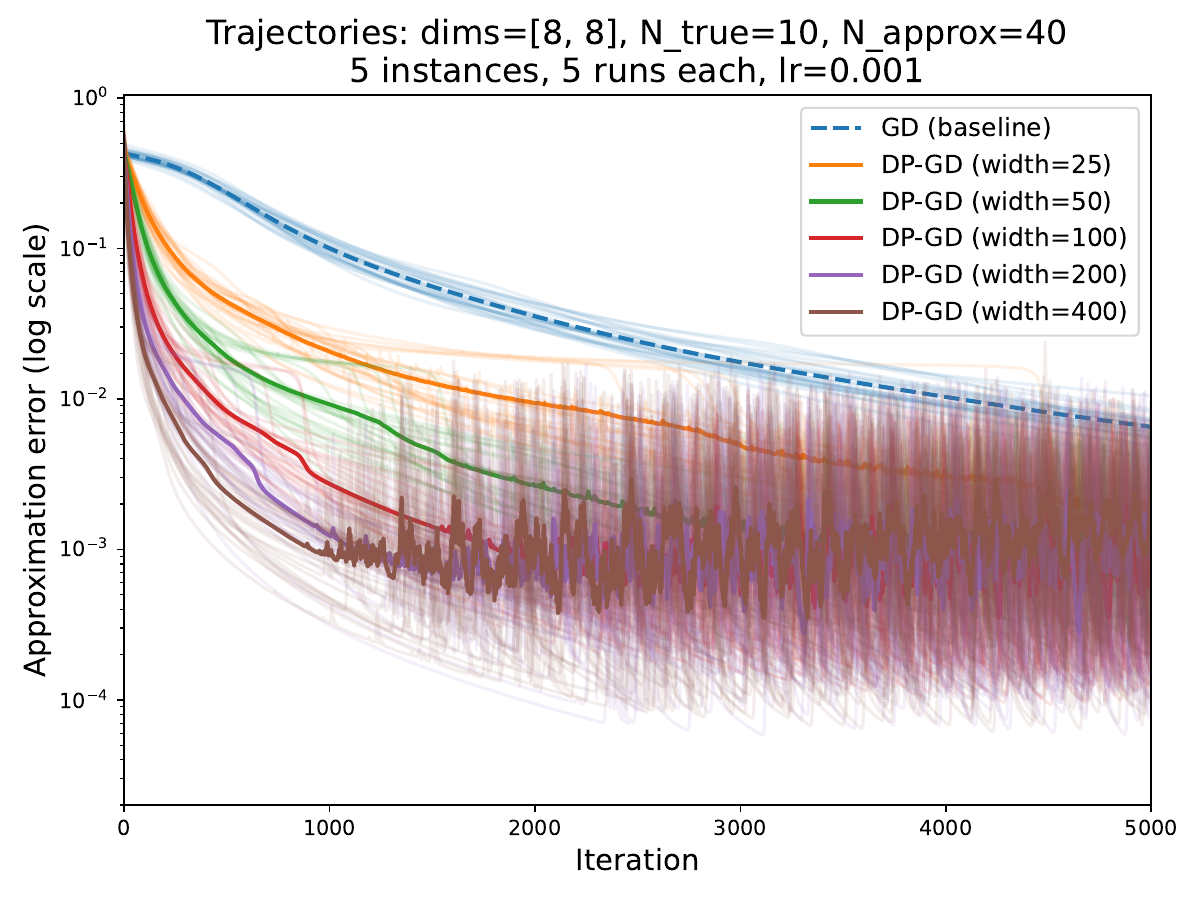}
        \caption{\footnotesize 
        $d=8$, $\Napprox=40$, $\eta=0.001$.
        }
        \label{fig:DPGD_width.3}
    \end{subfigure}
    \\
    \begin{subfigure}[t]{0.32\textwidth}
        \centering
        \includegraphics[width=\linewidth]{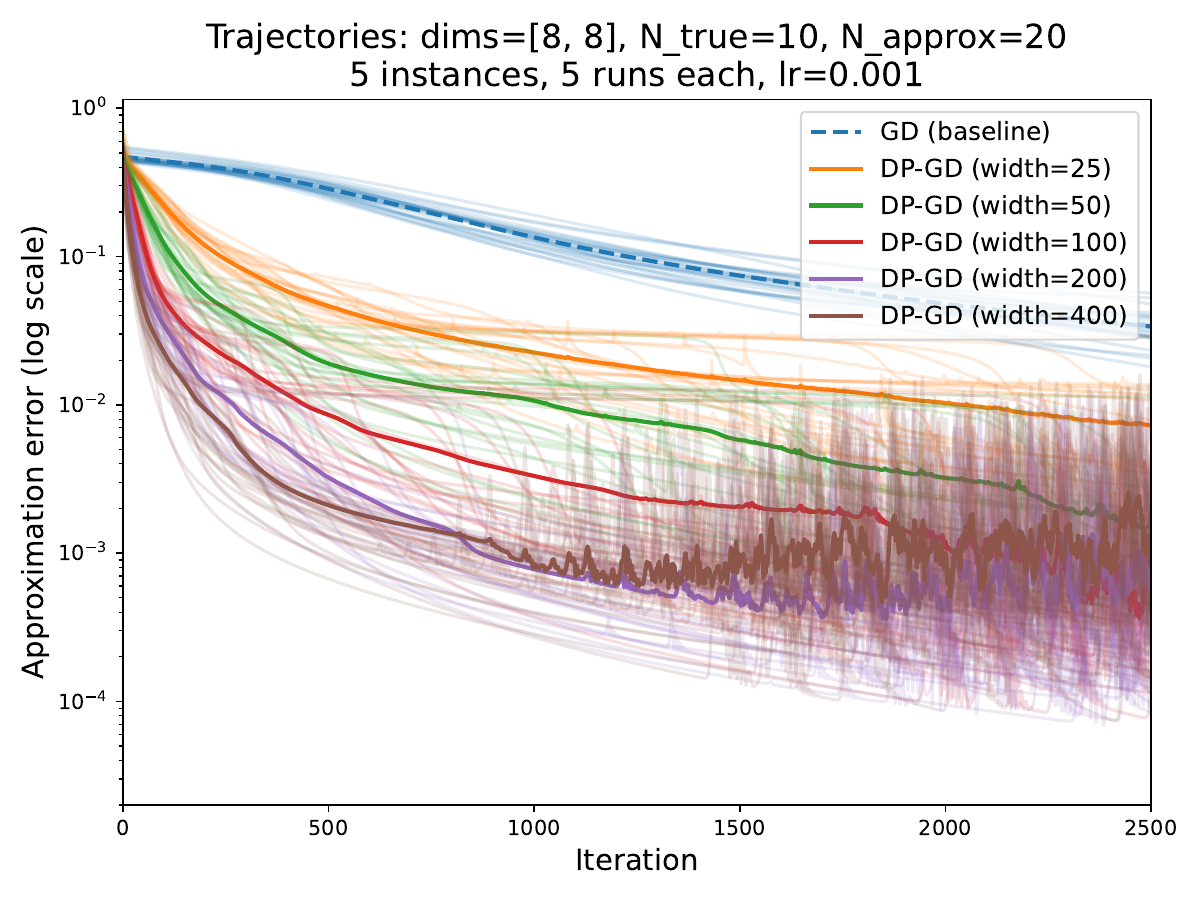}
        \caption{\footnotesize 
        $d=8$, $\Napprox=20$, $\eta=0.001$.
        }
        \label{fig:DPGD_width.4}
    \end{subfigure}
    \hfill
    \begin{subfigure}[t]{0.32\textwidth}
        \centering
        \includegraphics[width=\linewidth]{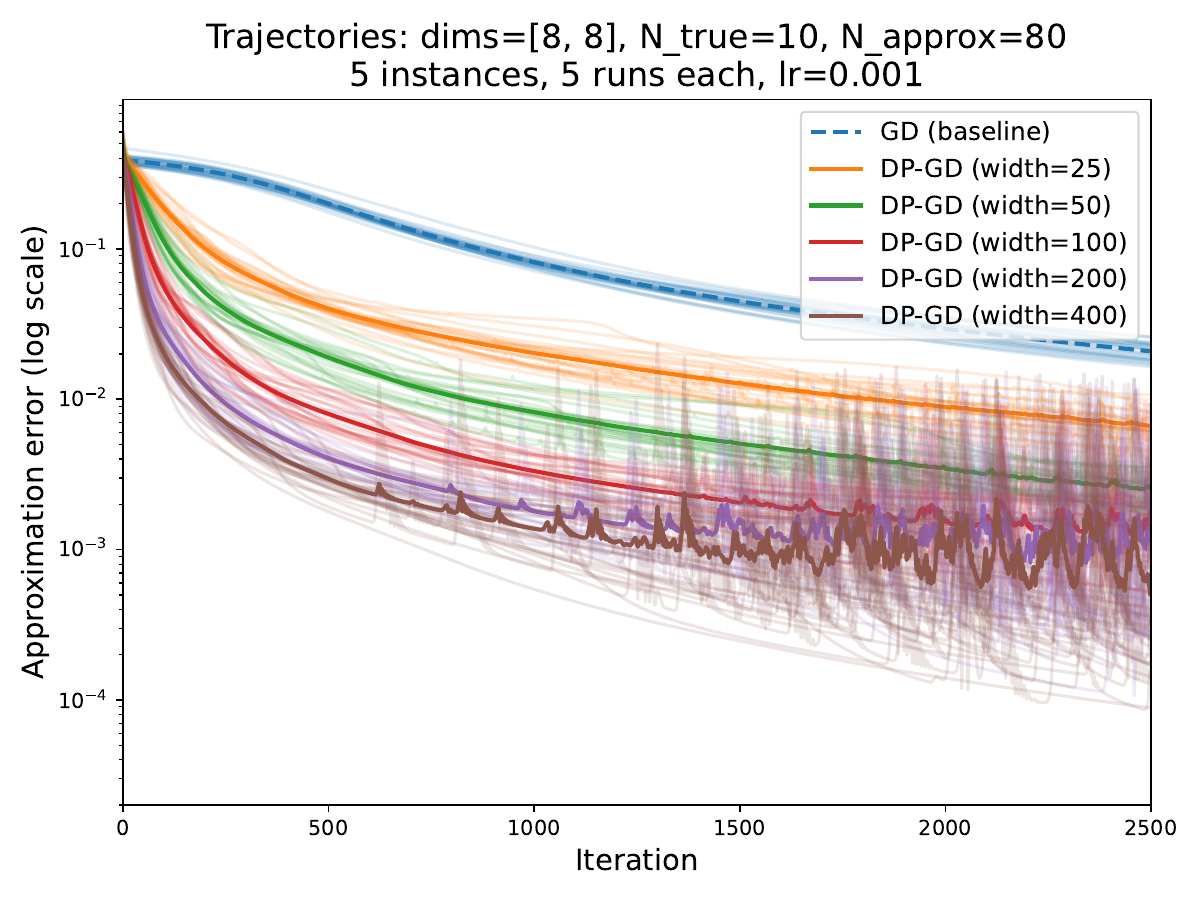}
        \caption{\footnotesize 
        $d=8$, $\Napprox=80$, $\eta=0.001$.
        }
        \label{fig:DPGD_width.5}
    \end{subfigure}
    \hfill
    \begin{subfigure}[t]{0.32\textwidth}
        \centering
        \includegraphics[width=\linewidth]{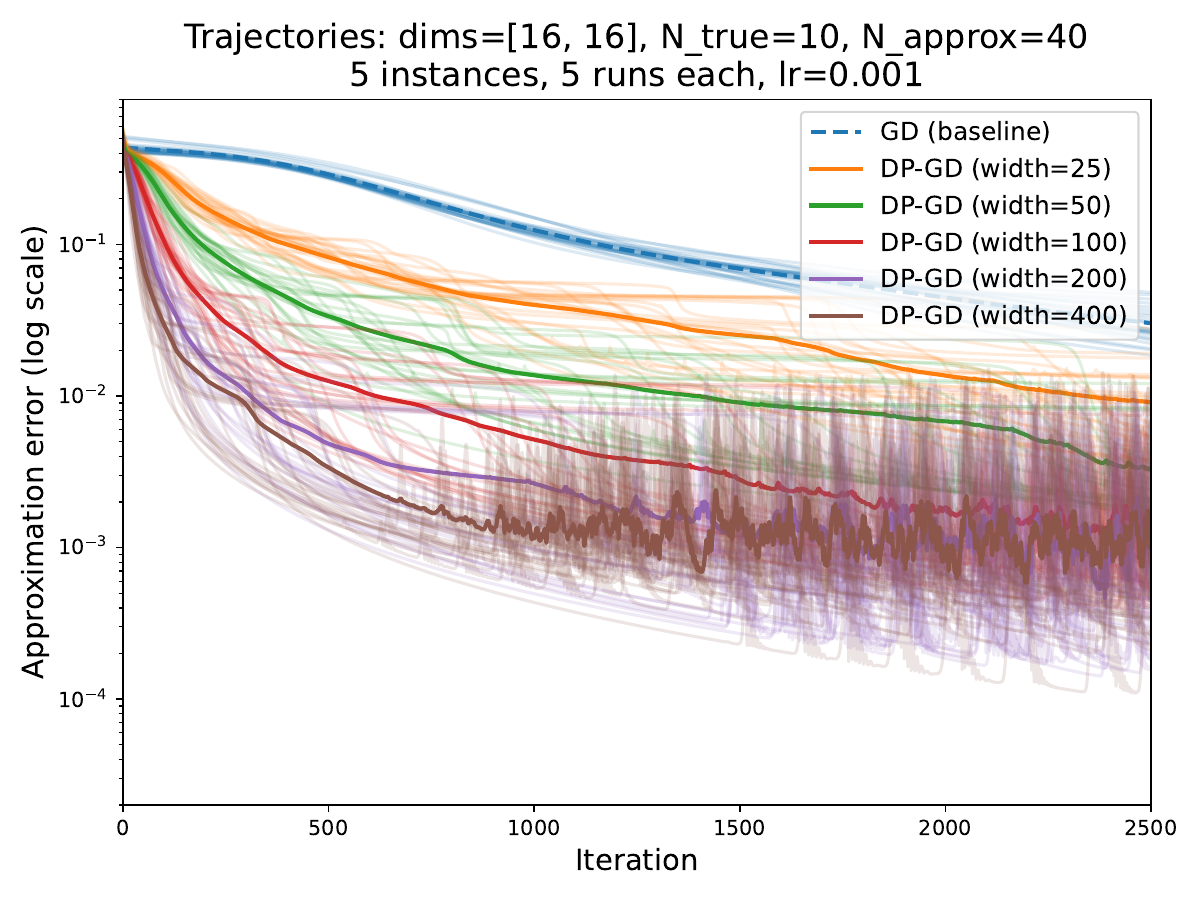}
        \caption{\footnotesize 
        $d=16$, $\Napprox=40$, $\eta=0.001$.
        }
        \label{fig:DPGD_width.6}
    \end{subfigure}

    \caption{\small
    When learning rate is large, DP-GD is unstable.  For a small learning rate, DP-GD with wider widths seem to make fastest progress early, but all plateau around $10^{-3}$. 
    }
    \label{fig:DP_GD_width}
\end{figure}

\begin{figure}[ht!]
    \centering
    \begin{subfigure}[t]{0.32\textwidth}
        \centering
        \includegraphics[width=\linewidth]{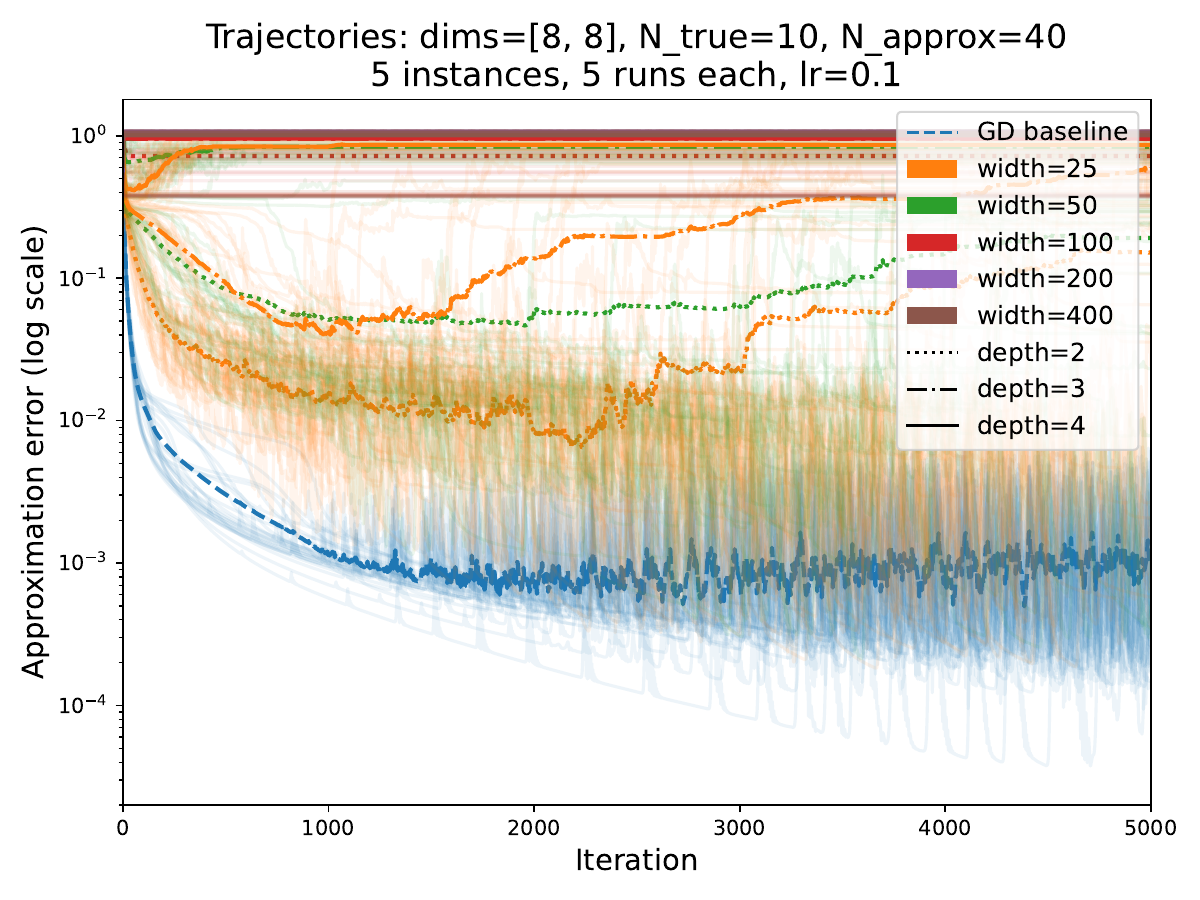}
        \caption{\footnotesize 
        $d=8$, $\Napprox=40$, $\eta=0.1$.
        }
        \label{fig:DPGD_depth.1}
    \end{subfigure}
    \hfill
    \begin{subfigure}[t]{0.32\textwidth}
        \centering
        \includegraphics[width=\linewidth]{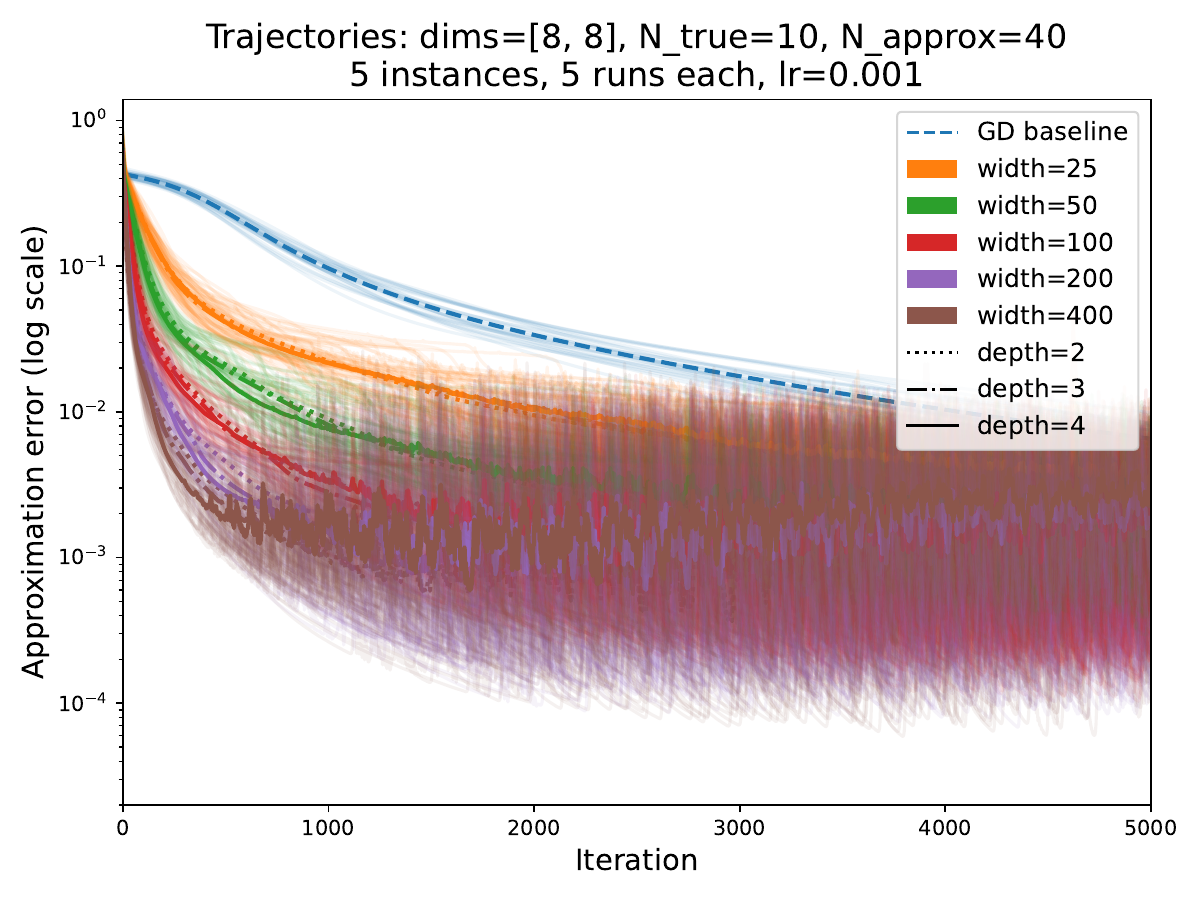}
        \caption{\footnotesize 
        $d=8$, $\Napprox=40$, $\eta=0.001$.
        }
        \label{fig:DPGD_depth.2}
    \end{subfigure}
    \hfill
    \begin{subfigure}[t]{0.32\textwidth}
        \centering
        \includegraphics[width=\linewidth]{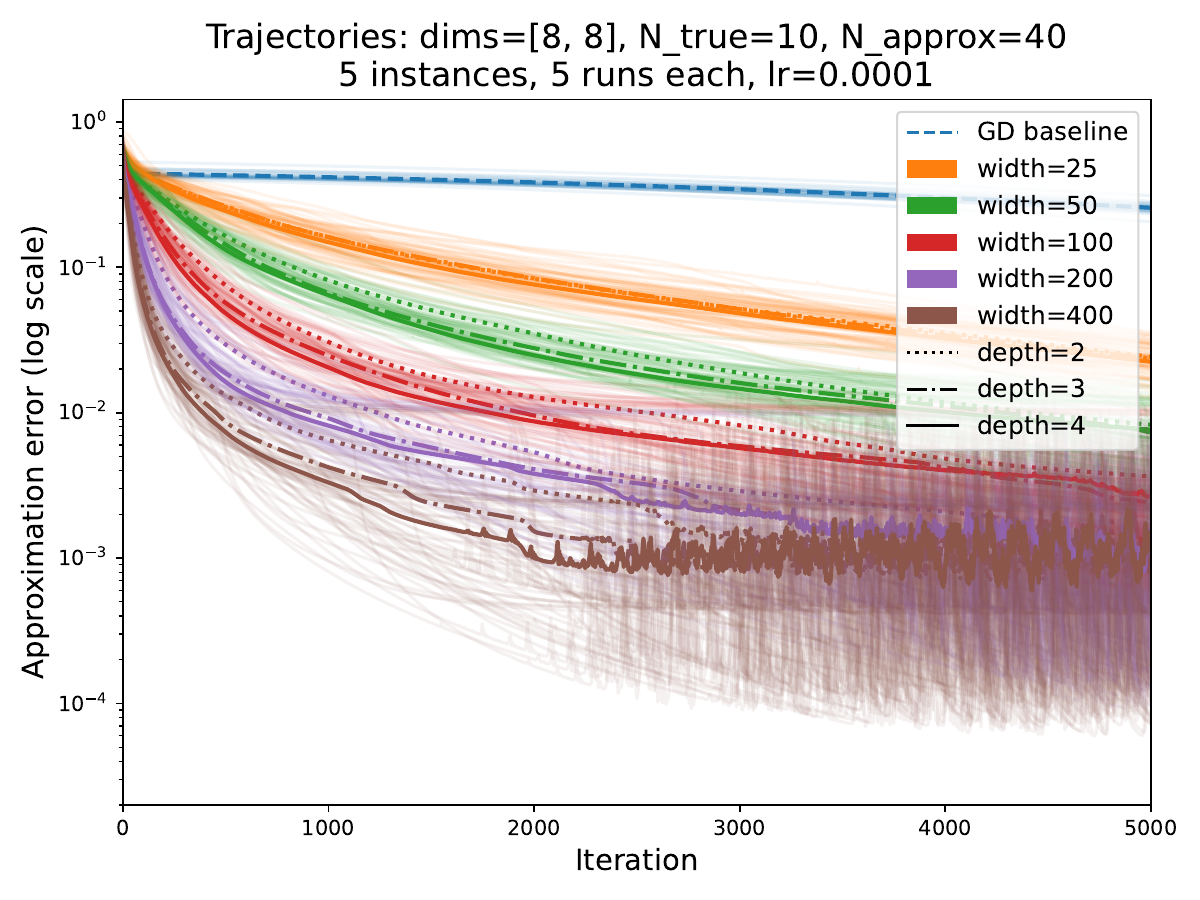}
        \caption{\footnotesize 
        $d=8$, $\Napprox=40$, $\eta=0.0001$.
        }
        \label{fig:DPGD_depth.3}
    \end{subfigure}
    \\
    \begin{subfigure}[t]{0.32\textwidth}
        \centering
        \includegraphics[width=\linewidth]{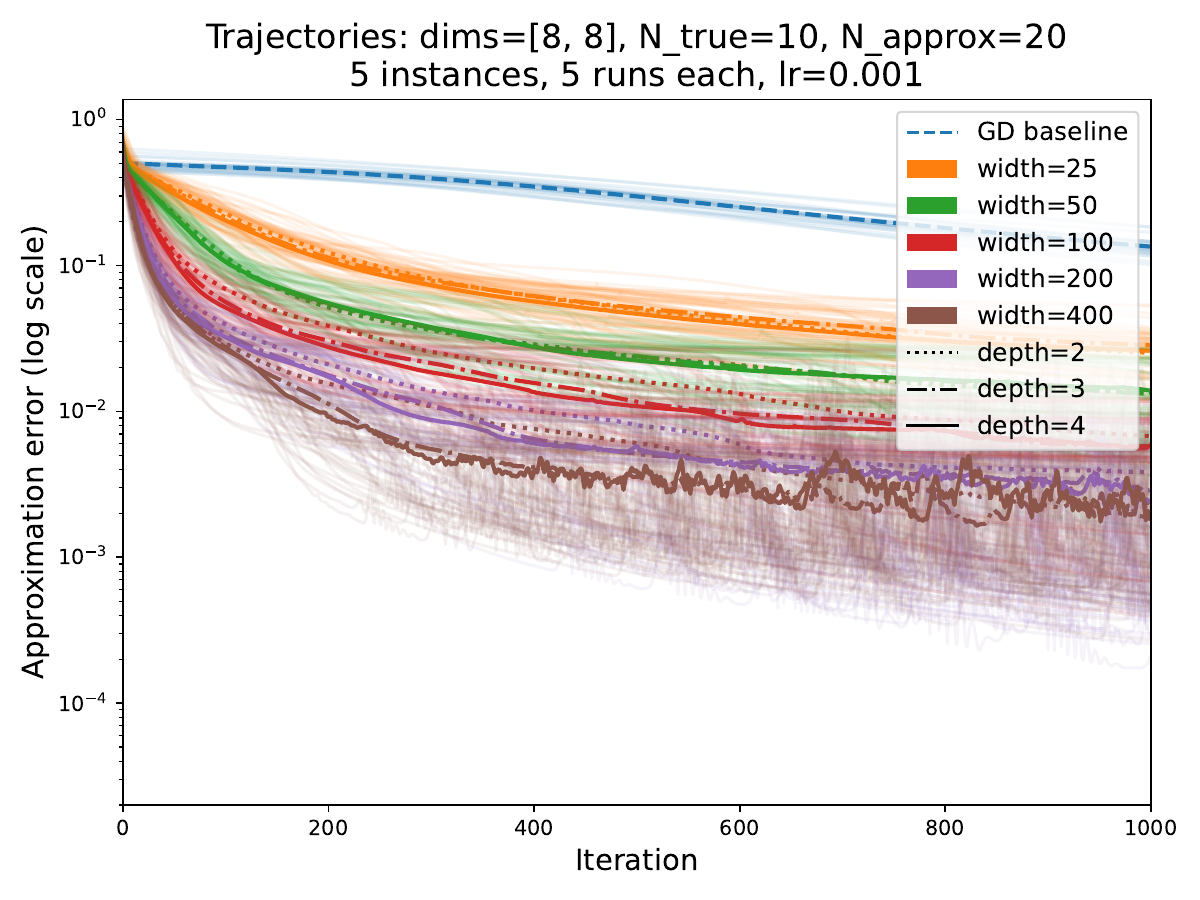}
        \caption{\footnotesize 
        $d=8$, $\Napprox=20$, $\eta=0.001$.
        }
        \label{fig:DPGD_depth.4}
    \end{subfigure}
    \hfill
    \begin{subfigure}[t]{0.32\textwidth}
        \centering
        \includegraphics[width=\linewidth]{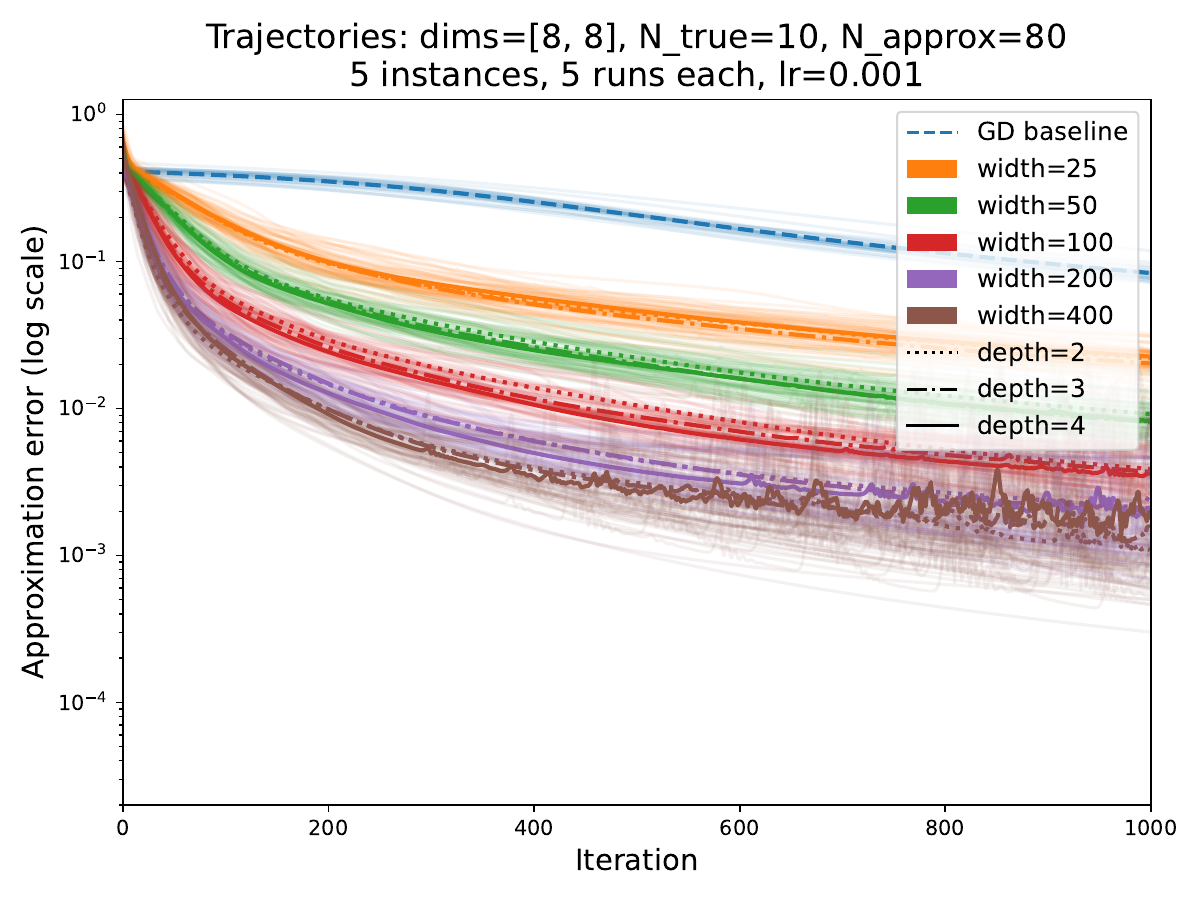}
        \caption{\footnotesize 
        $d=8$, $\Napprox=80$, $\eta=0.001$.
        }
        \label{fig:DPGD_depth.5}
    \end{subfigure}
    \hfill
    \begin{subfigure}[t]{0.32\textwidth}
        \centering
        \includegraphics[width=\linewidth]{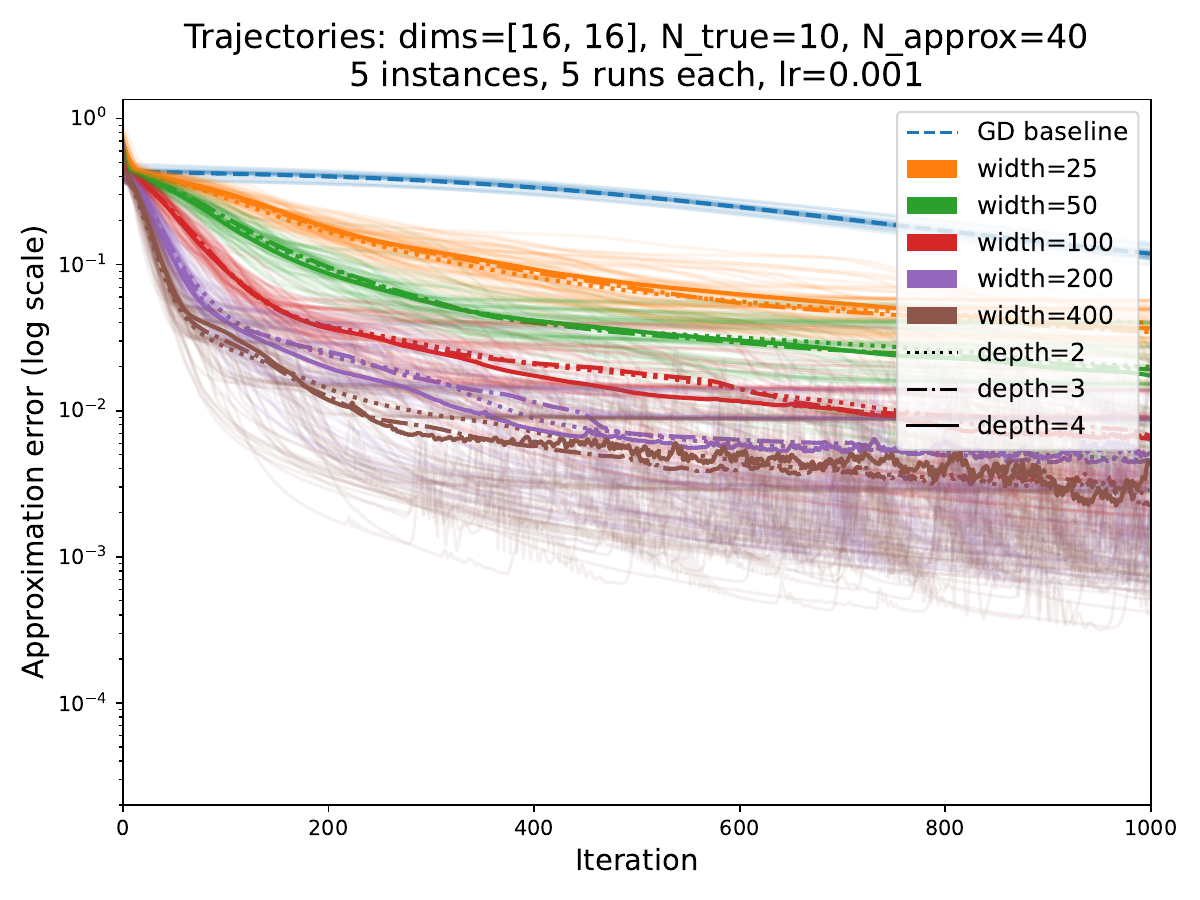}
        \caption{\footnotesize 
        $d=16$, $\Napprox=40$, $\eta=0.001$.
        }
        \label{fig:DPGD_depth.6}
    \end{subfigure}

    \caption{\small
    When learning rate is large, DP-GD is unstable.  For a small learning rate, DP-GD with wider widths seem to make fastest progress early, but all plateau around $10^{-3}$. 
    }
    \label{fig:DP_GD_depth}
\end{figure}

Interestingly, when the learning rate is small (e.g., 0.001), we observe a seemingly multi-phase trend in training: the trajectory plateaus near a certain level (approximately $6 \times 10^{-3}$) and then continues to converge down to $0$.

\subsection{Additional results on approximating quantum states}\label{sec:additional_quantum_exp}

This section provides extended experimental data and further discussions related to the quantum-state approximation experiments in Section \ref{sec:quantum_states}.
Specifically, we include:
\begin{itemize}
    \item 
    Experiments with larger mixture budgets to demonstrate that GD and DP-GD can succeed in higher dimensions (e.g.\ $d=8,16$) if $\Napprox$ is increased from 50 to 100 or 200, 
    \item 
    Detailed examples (especially at $d=2$) and discussions to illustrate how restricting rank-1 atoms to be real can yield systematically suboptimal approximations, despite the input states being real symmetric.
\end{itemize}

\subsubsection{Additional plots and extended experiments}
Recall from Section \ref{sec:quantum_states} that in Figure~\ref{fig:quantum_separability} all three methods (FW, GD, DP-GD) with \emph{complex} parameterization accurately reproduced the known separability thresholds and best-separable-approximation (BSA) distances in moderate dimensions ($d\in{2,3,4,5,6}$). 
However, at higher dimensions ($d=8,16$), GD and DP-GD often failed to reach the correct threshold, possibly due to a limited rank budget $\Napprox$ (set to $50$), whereas FW continued to recover known results.

Figure \ref{fig:extended} provides more detailed curves of the approximation error vs.\ $\alpha$ for dimensions up to $d=16$, this time varying the approximation budget $\Napprox$ from 50 to 100 and 200, respectively. 
We see that both GD and DP-GD become noticeably more accurate once allowed a larger representational capacity $\Napprox$. 
Also, observe that FW (with fixed stepsize rules with at most $1500$ iterations ) remains robust throughout all dimensions $d = 2,4,8,16$ , and at even larger dimension ($d=32$, see Figure \ref{fig:quantum_full_fw.complex}); we conjecture FW will continue to perform well at even larger dimensions, albeit with increased iteration costs.

\begin{figure}[ht!]
    \centering
    \begin{subfigure}[t]{0.48\textwidth}
        \centering
        \includegraphics[width=\linewidth]{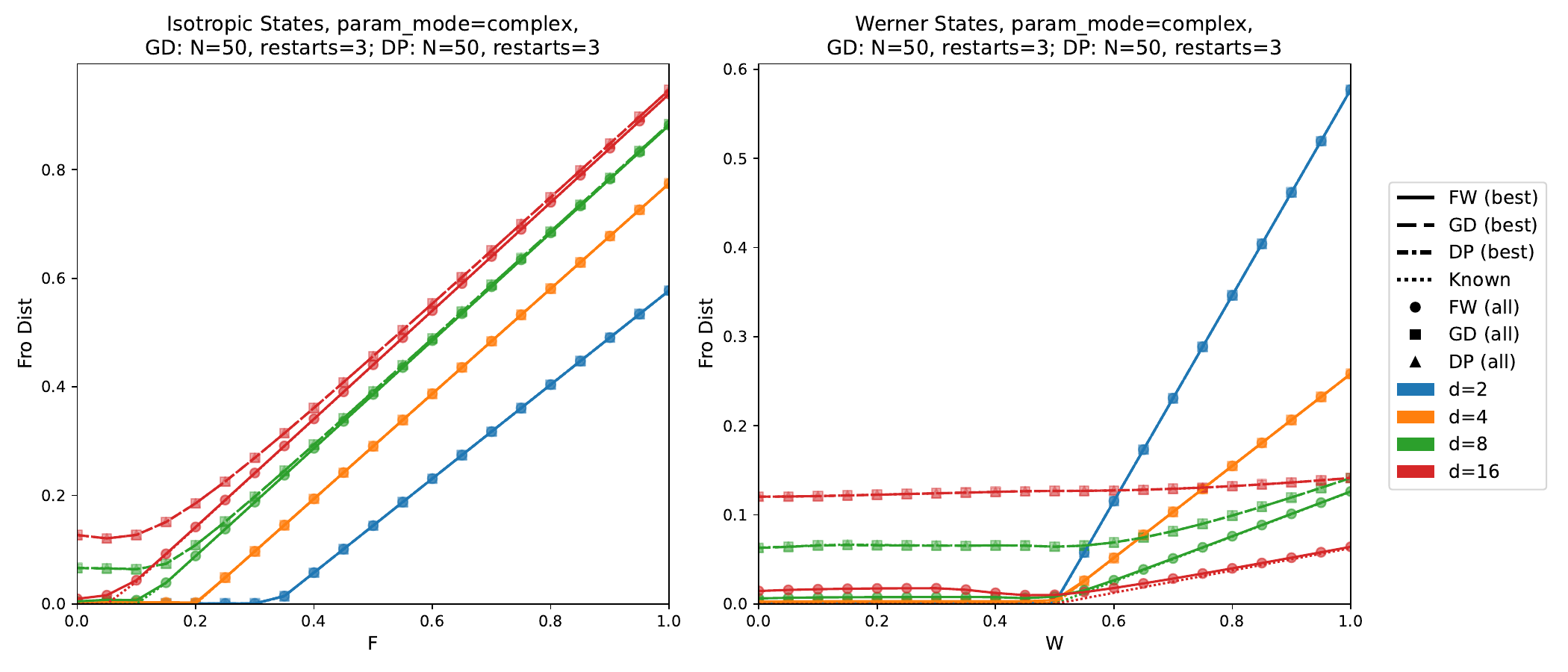}
        \caption{\footnotesize 
            \textbf{Budget $\Napprox =50$:} 
            FW accurately matches the known BSA results for $d \in \{2, 4, 8, 16\}$, whereas GD and DP-GD are off at $d=8,16$.
        }
        \label{fig:extended.1}
    \end{subfigure}
    \hfill
    \begin{subfigure}[t]{0.48\textwidth}
        \centering
        \includegraphics[width=\linewidth]{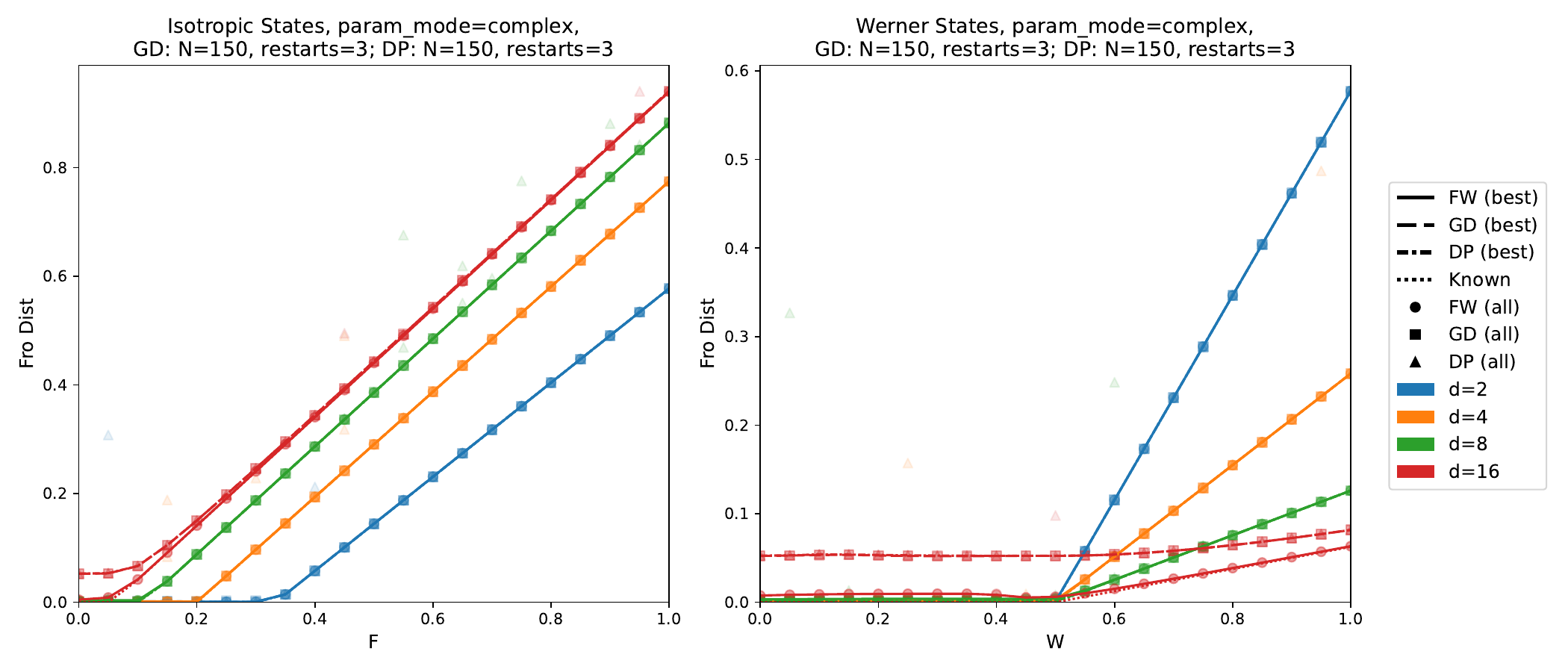}
        \caption{\footnotesize 
            \textbf{Budget $\Napprox =100$:} GD and DP-GD benefit from increased $\Napprox$ with improved accuracy at $d=8$ ($d=16$ still off).
        }
        \label{fig:extended.2}
    \end{subfigure}
    \\
    \begin{subfigure}[t]{0.48\textwidth}
        \centering
        \includegraphics[width=\linewidth]{figures/quantum/Fig_isotropic_werner_complex_Ngd150_steps250_Rgd3_Ndp150_Wdp200_Rdp3_full3.pdf}
        \caption{\footnotesize 
            \textbf{Budget $\Napprox =150$:} Better performance of GD/DP-GD, approaching FW’s accuracy, for both $d=8, 16$.
        }
        \label{fig:extended.3}
    \end{subfigure}
    \hfill
    \begin{subfigure}[t]{0.48\textwidth}
        \centering
        \includegraphics[width=\linewidth]{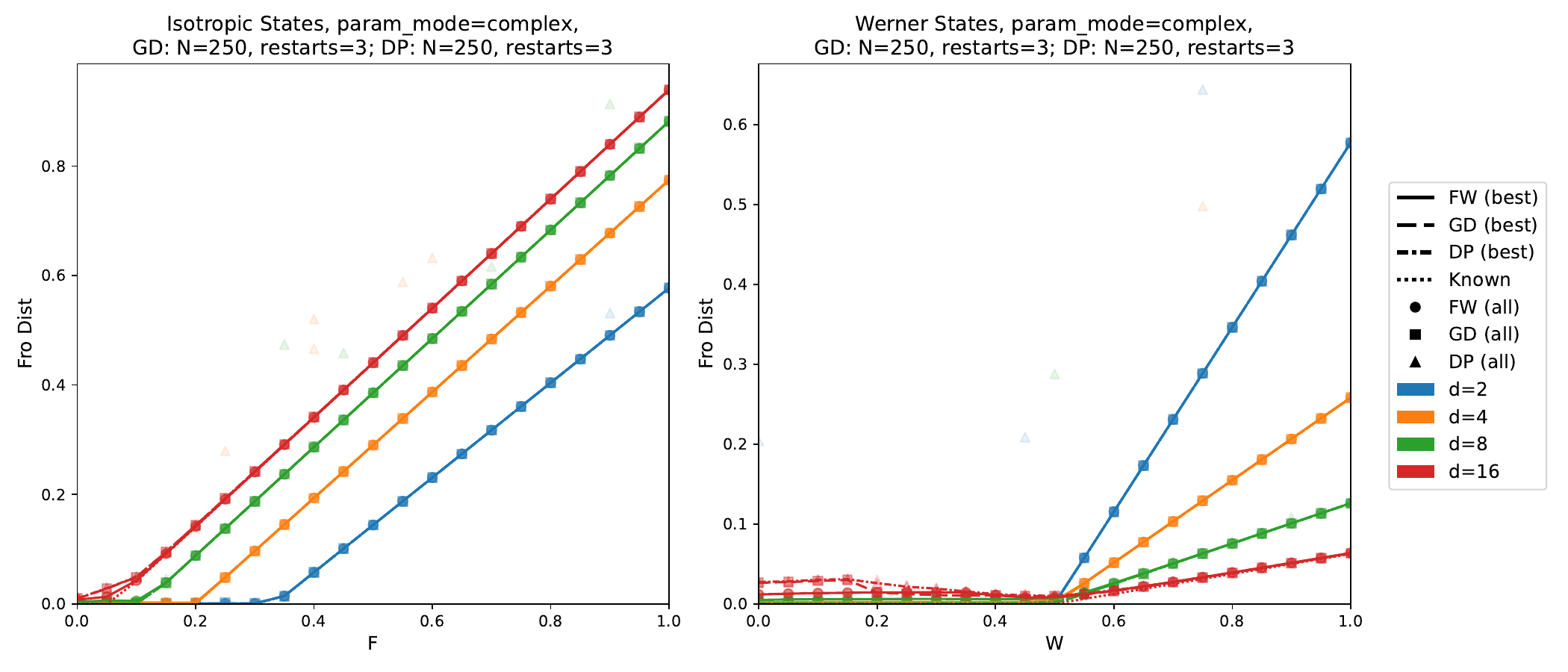}
        \caption{\footnotesize 
            \textbf{Budget $\Napprox =250$:} Even better performance of GD/DP-GD, approaching FW’s accuracy, for both $d=8, 16$.
        }
        \label{fig:extended.4}
    \end{subfigure}
    \caption{
        \textbf{Extended results on isotropic/Werner approximations:} Performance of GD and DP-GD improves as the rank budget $\Napprox$ increases from 50 to 100 150 or 250, especially for $d=8,16$.
        }
    \label{fig:extended}
\end{figure}

\subsubsection{Real vs.\ complex K-S approximation}\label{sec:real_vs_complex}
Quantum states are typically represented in a complex Hilbert space, so the isotropic and Werner states, $\bSigma^{\iso}_d(\alpha)$ and $\bSigma^{\Werner}_d(\alpha)$, appear as self-adjoint (Hermitian) operators on the product Hilbert space $\CC^d \otimes \CC^d$. 
Nevertheless, for each $\alpha \in [0,1]$, we can also write these operators as \emph{real symmetric} matrices. 
Thus, a natural question arises: \emph{Does enforcing rank-1 factors $\bv_a^{(k)}$ in \eqref{eqn:separable.2} to be purely real vectors (rather than complex) yield an equally good BSA?}

To clarify this question, observe that $\bSigma^{\iso}_d(\alpha) \in \cBsa_+(\CC^d \otimes \CC^d)$ is said to be Kronecker-separable (K-S) if $\bSigma^{\iso}_d(\alpha) \in \cBsa_+(\CC^d) \otimes \cBsa_+(\CC^d)$, when viewed as a complex operator (see Definition~\ref{defn:separable}). 
At the same time, $\bSigma^{\iso}_d$ is real symmetric, so one might wonder whether $\bSigma^{\iso}_d(\alpha) \in \cBsa_+(\RR^d) \otimes \cBsa_+(\RR^d)$---i.e., whether it admits a real K-S decomposition.

In fact, for isotropic states, we numerically find that this real K-S property holds \emph{only} when $\alpha = 0$, whereas the standard (complex) K-S property holds for all $\alpha \leq \frac{1}{d+1}$. 
Therefore, although $\bSigma^{\iso}_d(\alpha)$ is real symmetric, requiring a real-based K-S decomposition imposes strictly more stringent constraints than the complex-based one.
A similar discrepancy arises for the Werner states as well.

Empirically, for all $\alpha \in [0,1]$, our experiments show that restricting K-S expansions to real vectors generally produces a different---and typically worse---approximation than the known (complex) BSA. 
This gap persists even at moderate dimensions ($d \leq 6$), suggesting that real-only subspaces either lack the expressive power of complex expansions, or trap algorithms in suboptimal local minima.
Our additional (unreported) experiments indicate that limited expressive capacity is likely the more probable culprit. 
As illustrated in Figure~\ref{fig:quantum_complex_vs_real}, real-parameter expansions consistently fail to match the exact threshold or BSA distance achieved by complex-parameter methods.

\begin{figure}[ht]
    \centering
    \begin{subfigure}[t]{0.49\textwidth}
        \centering
        \includegraphics[width=\linewidth]{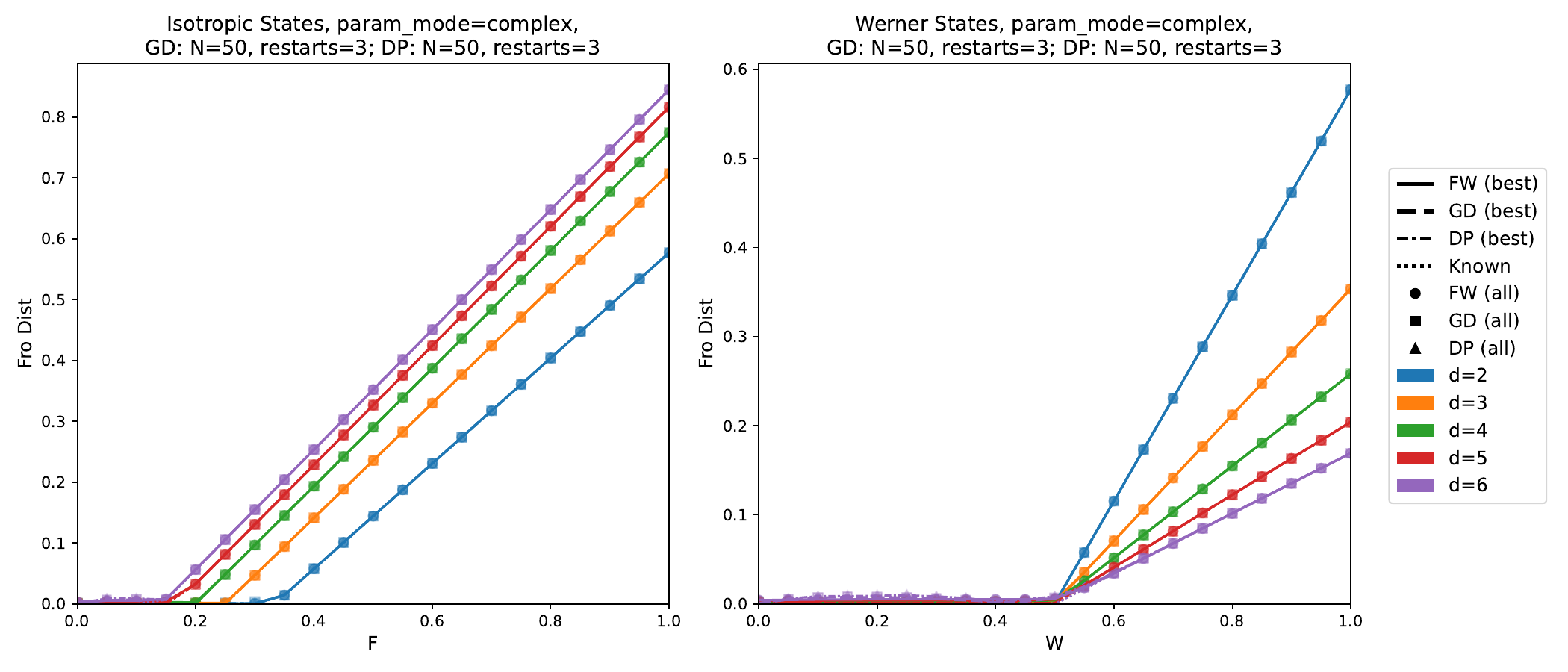}
        \caption{\footnotesize 
            \textbf{Complex param:} $d=2,3,4,5,6$. FW, GD, DP-GD closely match known BSAs.
        }
        \label{fig:quantum_small.complex}
    \end{subfigure}
    \hfill
    \begin{subfigure}[t]{0.49\textwidth}
        \centering
        \includegraphics[width=\linewidth]{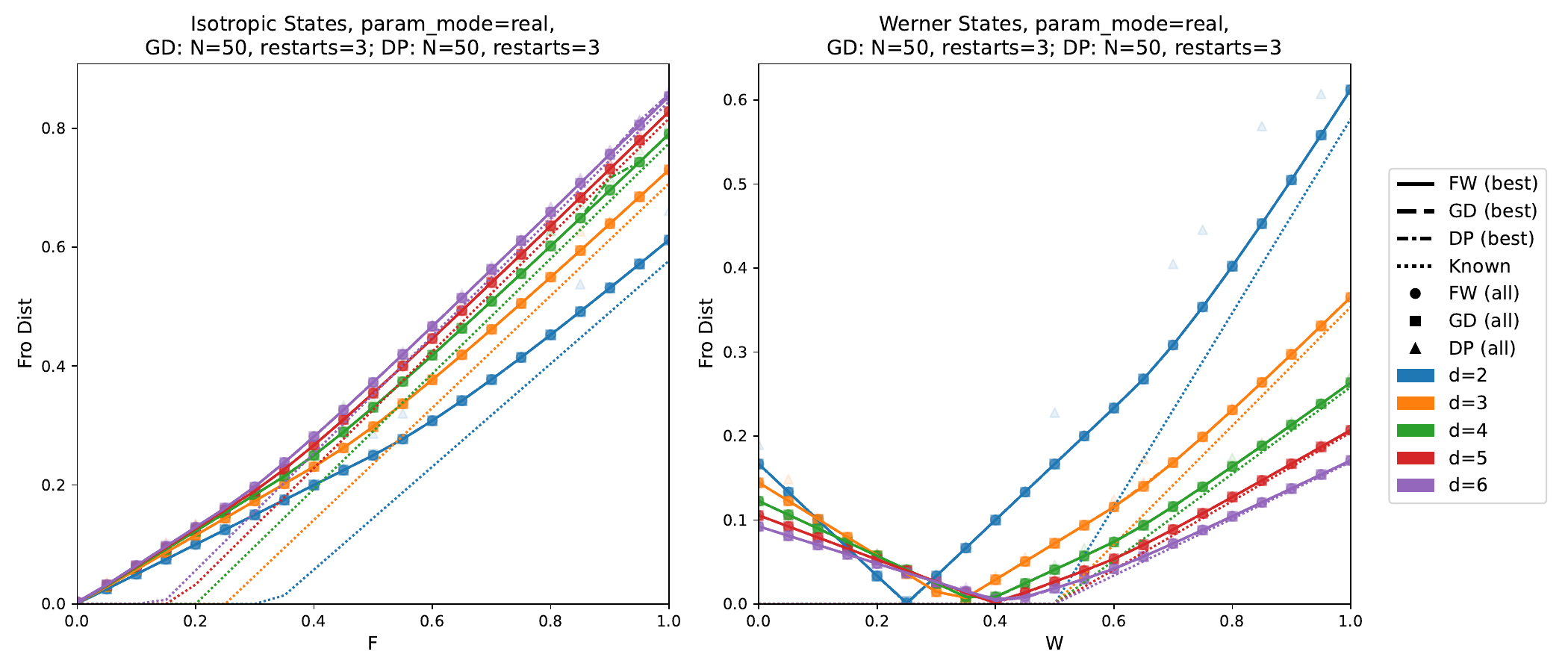}
        \caption{\footnotesize 
            \textbf{Real param:} $d=2,3,4,5,6$. All methods produce suboptimal solutions in most regimes.
        }
        \label{fig:quantum_small.real}
    \end{subfigure}
    \\
    \begin{subfigure}[t]{0.49\textwidth}
        \centering
        \includegraphics[width=\linewidth]{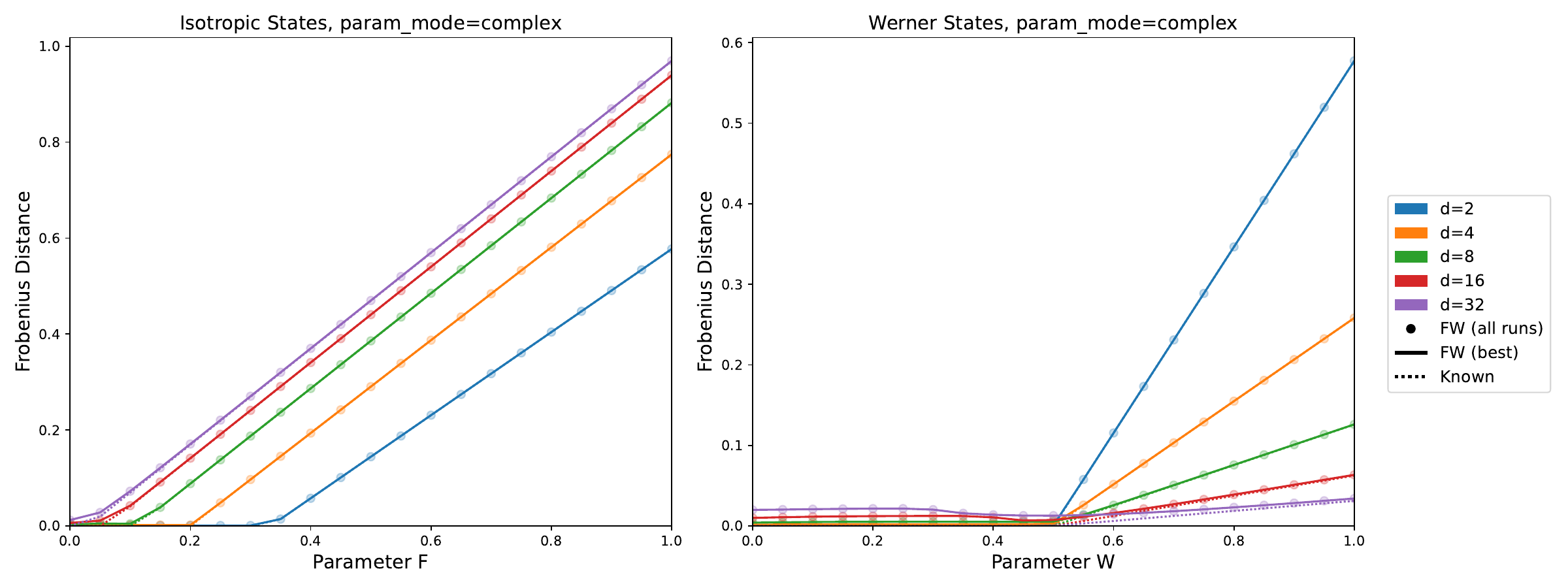}
        \caption{\footnotesize 
            \textbf{FW (complex):} $d=2,4,8,16,32$. FW shows near-exact threshold recovery under complex parameterization.
        }
        \label{fig:quantum_full_fw.complex}
    \end{subfigure}
    \hfill
    \begin{subfigure}[t]{0.49\textwidth}
        \centering
        \includegraphics[width=\linewidth]{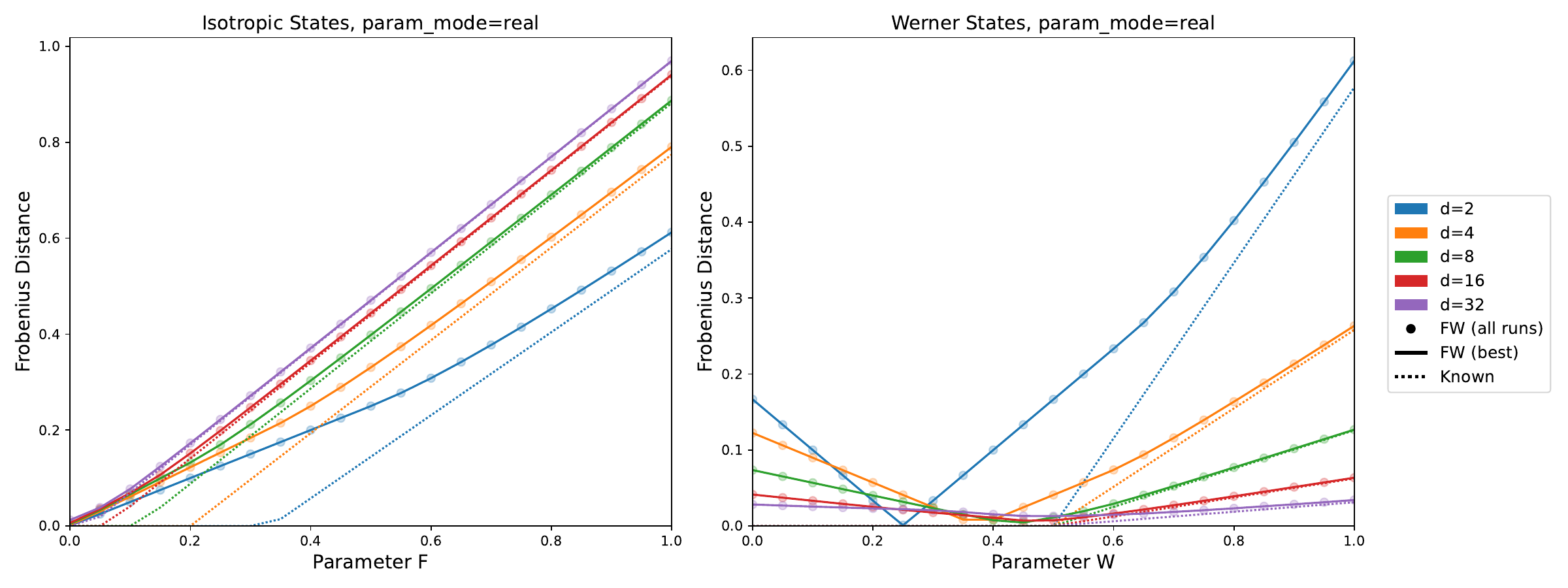}
        \caption{\footnotesize 
            \textbf{FW (real):} $d=2,4,8,16,32$. FW tends to converge to ``incorrect'' BSAs, deviating from complex parameterization.
        }
        \label{fig:quantum_full_fw.real}
    \end{subfigure}
    \caption{\small
         \textbf{Comparing real vs.\ complex parameterizations:}
         (a)–(b) Using complex vectors aligns with the known BSA in both isotropic and Werner families, while real vectors can systematically ``miss'' the correct solution.
         (c)–(d) Similar trends are observed at higher local dimensions ($d=8, 16, 32)$.  
        }
    \label{fig:quantum_complex_vs_real}
\end{figure}

\paragraph{Illustrations at $d=2$} 
To see this real vs complex discrepancy concretely, we examine $\bSigma^{\iso}_2(\alpha)$. 
Recall its known BSA expressions from Section \ref{sec:quantum_states}: for any $\alpha \in [0,1]$,
\begin{align*}
        \upisep\big( \bSigma^{\iso}_2(\alpha)  \big) = \bSigma^{\iso}_2(\alpha') 
        = \begin{bmatrix} 
            \frac{1+\alpha'}{4} & & & \frac{\alpha'}{2} \\
            & \frac{1-\alpha'}{4} & & \\
            & & \frac{1-\alpha'}{4} & \\
            \frac{\alpha'}{2} & & & \frac{1+\alpha'}{4} 
        \end{bmatrix},
        \quad\text{where}\quad \alpha' = \min\{ \alpha, 1/3 \}.
\end{align*}
While FW, GD, DP-GD converge to these BSAs under complex parameterization (Figure~\ref{fig:quantum_small.complex}), we observed that under real parameterization all three methods instead converge to ``incorrect'' matrices (Figure~\ref{fig:quantum_small.real}) of the following form:
\[
    \widetilde{\bSigma}^{\iso}_2(\alpha'') 
        = \begin{bmatrix} 
            \frac{1+\alpha''}{4} & & & \frac{\alpha''}{4} \\
            & \frac{1-\alpha''}{4} & \frac{\alpha''}{4} & \\
            & \frac{\alpha''}{4} & \frac{1-\alpha''}{4} & \\
            \frac{\alpha''}{4} & & & \frac{1+\alpha''}{4} 
        \end{bmatrix},
        \quad\text{where}\quad \alpha'' = \min\{ \alpha, 1/2 \}.
\]

A similar phenomenon appears for the Werner state $\bSigma^{\Werner}_d(\alpha)$. 
For any $\alpha \in [0,1]$, the known BSA expression for Werner state is
\begin{align*}
        \upisep\big( \bSigma^{\Werner}_2(\alpha)  \big) = \bSigma^{\Werner}_2(\alpha') 
        = \begin{bmatrix} 
            \frac{1-\alpha'}{3} & & & \\
            & \frac{1+2\alpha'}{6} & \frac{1-4\alpha'}{6} & \\
            & \frac{1-4\alpha'}{6} & \frac{1+2\alpha'}{6} & \\
            & & & \frac{1-\alpha'}{3} 
        \end{bmatrix},
        \quad\text{where}\quad \alpha' = \min\{\alpha, 1/2\}.
\end{align*}
Again, all three methods converge to these BSAs under complex parameterization, whereas they systematically produce a distinct matrix under real parameterization:
\[
    \widetilde{\bSigma}^{\Werner}_2(\alpha'') 
        = \begin{bmatrix} 
            \frac{1-\alpha''}{3} & & & \frac{1-4\alpha''}{12} \\
            & \frac{1+2\alpha''}{6} & \frac{1-4\alpha''}{12} & \\
            & \frac{1-4\alpha''}{12} & \frac{1+2\alpha''}{6} & \\
            \frac{1-4\alpha''}{12} & & & \frac{1-\alpha''}{3} 
        \end{bmatrix},
        \quad\text{where}\quad \alpha'' = \min\{ \alpha, 5/8 \}.
\]

\paragraph{Possible cause: Limited expressive power} 
Based on further (unpresented) experiments, we believe these persistent discrepancies primarily result from the \emph{lack of expressive power} in purely real expansions, rather than from local minima alone.
We conjecture that capturing the requisite phase relationships for certain entangled structures is essential in Kronecker-separable approximation---possibly the imaginary parts are canceled out, leaving richer expressive powers in approximation---which appears lacking in purely real expansions.
Thus, even though the exact BSA belongs to the real convex hull, constructing it with “simpler” real parameters (like small-rank sums of rank-1 factors) seems infeasible. 
This aligns with broader evidence that real-only frameworks can be incomplete for quantum structures \cite{renou2021quantum}, reinforcing the conclusion that complex degrees of freedom are crucial to represent relevant Kronecker-product structures accurately.

\end{document}